\newtheorem{theorem}{Theorem}[section]
\newtheorem{lemma}[theorem]{Lemma}
\newtheorem{proposition}[theorem]{Proposition}
\newtheorem{corollary}[theorem]{Corollary}
\theoremstyle{definition}
\newtheorem{definition}[theorem]{Definition}
\newtheorem{remark}[theorem]{Remark}
\newtheorem{claim}{Claim}
\newtheorem{example}[theorem]{Example}
\newcommand{\ReDeclareMathOperator}[2]{\let#1\relax\DeclareMathOperator{#1}{#2}}
\DeclareMathOperator{\at}{\mathcal{a{\mkern-2.5mu}t}}
\DeclareMathOperator{\pt}{\textit{pt}}
\DeclareMathOperator{\cons}{{\mathcal{Cons}}}
\DeclareMathOperator{\ats}{\mathcal{a{\mkern-2.5mu}t}_S}
\ReDeclareMathOperator{\int}{int}
\renewcommand{\O}{\mathcal O}
\newcommand{\C}{\mathcal C}
\newcommand{\B}{\mathcal B}
\newcommand{\F}{\mathcal F}
\newcommand{\sfO}{{\mathcal O}}
\newcommand{\sfC}{{\mathcal C}}
\newcommand{\LC}{{\mathcal {LC}}}
\newcommand{\s}{\mathcal s}
\ReDeclareMathOperator{\K}{K}
\ReDeclareMathOperator{\S}{Sat}
\ReDeclareMathOperator{\CS}{CSat}
\ReDeclareMathOperator{\KS}{KS}
\ReDeclareMathOperator{\LO}{CLC}
\ReDeclareMathOperator{\CSAT}{CSAT}
\ReDeclareMathOperator{\SFilt}{SFilt}
\ReDeclareMathOperator{\WLC}{WLC}
\ReDeclareMathOperator{\WLO}{WCLC}
\ReDeclareMathOperator{\GC}{AC}
\ReDeclareMathOperator{\P}{\mathcal{P}}
\ReDeclareMathOperator{\Pp}{\mathcal{P}_P}
\mathchardef\mhyphen="2D
\newcommand{\categoryname}[1]{\ensuremath{\mathbf{#1}}\xspace}
\newcommand{\DeclareCategory}[2]{\newcommand{#1}{\categoryname{#2}}}
\DeclareCategory{\MT}{MT}
\DeclareCategory{\MTD}{MT_D}
\DeclareCategory{\Skel}{Skel}
\DeclareCategory{\wMT}{PMT}
\DeclareCategory{\MTP}{MT_P}
\DeclareCategory{\PMTD}{PMT_D}
\DeclareCategory{\TDMTP}{TD MT_P}
\DeclareCategory{\TDMT}{TDMT}
\DeclareCategory{\ToMT}{T0MT}
\DeclareCategory{\TDMTD}{TDMT_D}
\DeclareCategory{\ToMTD}{T0MT_D}
\DeclareCategory{\SMT}{SMT}     
\DeclareCategory{\SMTD}{SMT_D}    
\DeclareCategory{\SMTP}{SMT_P}    
\DeclareCategory{\STDMT}{STDMT}    
\DeclareCategory{\STDMTP}{STDMT_P}    
\DeclareCategory{\STOMT}{ST0MT}    
\DeclareCategory{\STOMTD}{ST0MT_D}    
\DeclareCategory{\SwMT}{SwMT}
\DeclareCategory{\RMT}{RMT}
\DeclareCategory{\KMT}{KMT}
\DeclareCategory{\Frm}{Frm} 
\DeclareCategory{\FrmD}{Frm_D} 
\DeclareCategory{\TDFrm}{T_DFrm}
\DeclareCategory{\TDSFrm}{TD\mhyphen SFrm} 
\DeclareCategory{\TDSFrmD}{TD\mhyphen SFrm_D} 
\DeclareCategory{\pRaney}{pRaney}
\DeclareCategory{\BFrm}{BoolFrm}
\DeclareCategory{\CFrm}{CFrm}
\DeclareCategory{\SFrm}{SFrm}
\DeclareCategory{\SHFrm}{SHFrm}
\DeclareCategory{\NFrm}{NormFrm}
\DeclareCategory{\SNFrm}{SNormFrm}
\DeclareCategory{\HFrm}{HFrm}
\DeclareCategory{\RegFrm}{RegFrm}
\DeclareCategory{\SRegFrm}{SRegFrm}
\DeclareCategory{\CRegFrm}{CRegFrm}
\DeclareCategory{\SCRegFrm}{SCRegFrm}
\DeclareCategory{\KRegFrm}{KRegFrm}
\DeclareCategory{\Top}{Top} 
\DeclareCategory{\ToTop}{T0Top} 
\DeclareCategory{\TDTop}{TDTop} 
\DeclareCategory{\ToTopLC}{T0Top_{LC}} 
\DeclareCategory{\TopLC}{Top_{LC}}
\DeclareCategory{\TopS}{Top_{S}}
\DeclareCategory{\TDTopS}{TDTop_{S}}
\DeclareCategory{\Sob}{Sob} 
\DeclareCategory{\Bool}{BA}
\DeclareCategory{\BA}{Bool} 
\DeclareCategory{\DLat}{DLat}
\DeclareCategory{\HLat}{HLat}
\DeclareCategory{\Cons}{Cons}
\DeclareCategory{\HA}{Heyt}
\DeclareCategory{\cHA}{cHA}
\DeclareCategory{\IA}{Int} 
\DeclareCategory{\IAC}{Int_C}
\DeclareCategory{\SA}{Ess}
\DeclareCategory{\SAC}{Ess_C}
\DeclareCategory{\Reg}{Reg}
\DeclareCategory{\Set}{Set}
\DeclareCategory{\CABA}{CABA}
\renewcommand{\diamond}{\lozenge}
\let\ampersand\&
\renewcommand{\&}{\mathbin{\ampersand}}
\newcommand{\upset}{\mathord{\uparrow}}
\newcommand{\se}{\subseteq}
\setlist[enumerate]{font=\normalfont}
\tikzset{
  symbol/.style={
    draw=none,
    every to/.append style={
      edge node={node [sloped, allow upside down, auto=false]{$#1$}}}
  }
}
\patchcmd{\@setaddresses}{\indent}{\noindent}{}{}
\patchcmd{\@setaddresses}{\indent}{\noindent}{}{}
\patchcmd{\@setaddresses}{\indent}{\noindent}{}{}
\patchcmd{\@setaddresses}{\indent}{\noindent}{}{}
\title{The Funayama envelope as the $T_D$-hull of a frame}
\author{G. Bezhanishvili, R. Raviprakash, A. L. Suarez, J. Walters-Wayland }
\address{New Mexico State University, Las Cruces, NM, USA}
\email{guram@nmsu.edu}
\email{prakash2@nmsu.edu}
\address{Coimbra University, Portugal}
\email{annalaurasuarez993@gmail.com}
\address{CECAT, Chapman University, Orange, CA, USA}
\email{jwalterswayland@gmail.com}
\subjclass[2020]{18F60; 18F70; 06D22; 06E25; 54E05; 54D10}
\keywords{Topology, frame,  interior algebra, proximity, $T_D$-separation}
\begin{document}

\begin{abstract}
    We introduce proximity morphisms between MT-algebras and show that the resulting category is equivalent to the category of frames. This is done by utilizing the Funayama envelope of a frame, which is viewed as the $T_D$-hull. Our results have some spatial ramifications, including a generalization of the $T_D$-duality of Banaschewski and Pultr. 
\end{abstract}

\maketitle

\tableofcontents

\section{Introduction}

Pointfree topology has its origins in the study of topological spaces where the lattice of open sets is taken as the core construct. Although this has been very
fruitful (see, e.g., \cite{Joh1982,PicadoPultr2012}), it has its own drawbacks because the language is often not expressive enough, especially to capture lower separation axioms (see, e.g., \cite{PP2021}). 
A more expressive formalism is obtained by 
describing a topological space via an interior operator on the powerset.
This approach goes back to Kuratowski \cite{Kur1922}, and has further been developed by McKinsey and Tarski \cite{MT1944} in the form of the theory of interior algebras; that is, boolean algebras equipped with an interior operator. As is manifested by the title of their article, {\em The Algebra of Topology}, they envisioned an algebraic formalism to reason about topology. 
This approach turned out to be very 
beneficial not only for topology, but also for the foundations of mathematics in general, and the connection between intuitionistic and modal logics in particular 
(see, e.g., \cite{RS1963,CZ1997,Esa2019}). 

As was demonstrated in \cite{BezhanishviliRR2023} (see also \cite{RS1963,Naturman1990}), especially important in topology are those interior algebras whose underlying boolean algebra is complete. Indeed, those can naturally be thought of as 
a pointfree generalization of spaces, and give rise to frames by taking the poset of open elements. 
These algebras were coined MT-algebras, in honor of McKinsey and Tarski. Equipping MT-algebras with an appropriate notion of morphism, 
we obtain the category $\MT$, 
and the open element functor $\O$ from $\MT$ to the category $\Frm$ of frames. Moreover, up to isomorphism, every frame arises as the frame of open elements of some MT-algebra, thanks to the well-known Funayama embedding of each frame into a complete boolean algebra \cite{funayama59}. We call this the Funayama envelope of a frame $L$ and denote it by $\F L$.
However, the assignment $L\mapsto \F L$ is not functorial: frame morphisms do not in general lift to complete lattice maps between their Funayama envelopes \cite[Example~4.4]{BezhanishviliRR2023}. To amend this, we weaken the morphisms between MT-algebras by basing the new morphisms on a proximity-like structure on the MT-algebra, reminiscent of de Vires proximity on a boolean algebra (see \cite{deV62,Bezh2010}). This modification enables us to obtain $\F$ as a functor from $\Frm$ to this new category $\bf MT_P$ of MT-algebras and proximity moprhisms. One of our main results establishes that the functors $\O$ and $\F$ yield an equivalence of these categories. 

As we will see, the Funayama envelope of a frame always satisfies $T_D$-separation. Spatially, this is the separation axiom of Aull and Thron \cite{aull62} stating that each point is locally closed.
The MT-version of it states that locally closed elements join-generate the MT-algebra. 
It appears that there is no satisfactory way to capture this separation in the language of frames.
We think of the Funayama envelope as the $T_D$-hull of a frame, thus providing a useful formalism to capture $T_D$-separation pointfreely, albeit in the language of MT-algebras rather than frames.
As a result, we obtain that each MT-algebra has a $T_D$-reflection, which also happens to be a coreflection.
This is explained by the fact that proximity morphisms are weak enough so that not all isomorphisms in this category are structure-preserving bijections. This, in particular, results in an equivalence of $\MTP$, its full subcategory $\TDMTP$ of $T_D$-algebras, and $\Frm$ (for the reader's convenience, all categories of interest are gathered together in tables at the end of the paper).

Our results have some spatial ramifications, including a further explanation and generalization of the $T_D$-duality of Banaschewski and Pultr \cite{banaschewskitd}. Indeed, we generalize their notion of a D-morphism of frames to that of a D-morphism between MT-algebras, and prove that the category $\STDMT$ of spatial $T_D$-algebras 
is a reflective subcategory of the category $\MTD$ of MT-algebras and D-morphsims, and is equivalent to the category $\TDTop$ of $T_D$-spaces. 
This yields a pointfree description of the $T_D$-coreflection of \cite[3.7.2]{banaschewskitd}, which is not expressible in the language of frames.
Another advantage of the MT-approach is that every MT-morphism between $T_D$-algebras is a D-morphism, which is in contrast with what happens in the setting of frames (where the category $\TDSFrmD$ of $T_D$-spatial frames with D-morphisms is not a full subcategory of $\Frm$). 

In the $T_D$-duality of \cite{banaschewskitd},  
only D-morphisms are captured as duals of continuous maps whereas we are able to capture all frame morphisms, as well as the corresponding proximity morphisms. This is done by introducing the notion of sober maps (that is, continuous maps from one topological space to the soberification of another), thus obtaining  a more general duality for $T_D$-algebras that subsumes the $T_D$-duality for frames. The latter is the restriction of a new 
duality between the categories $\TopS$ of topological spaces and sober maps and $\SMTP$ of spatial MT-algebras and proximity morphisms.

The ability to describe the $T_D$-hull of a frame provides further evidence that this alternate pointfree approach to topology is highly beneficial by affording sufficiently expressive power to capture lower separation axioms, which have been elusive in locale theory.

\section{Preliminaries} \label{sec: prelims}

In this section we briefly review some well-known facts about frames and MT-algebras that we will use in the rest of the paper. 

\subsection{Frames and co-frames}\label{frames and coframes}
We recall that a complete lattice $L$ is a {\em frame} if it satisfies the join-infinite distributive law
$$
a\wedge\bigvee S=\bigvee\{a\wedge s\mid s\in S\},
$$
and a {\em co-frame} if it satisfies the meet-infinite distributive law
$$
a\vee\bigwedge S=\bigwedge\{a\vee s\mid s\in S\}
$$
for all $a\in L$ and $S\subseteq L$. 
A {\em frame morphism} is a map between frames preserving arbitrary joins and finite meets, and a {\em co-frame morphism} is defined dually. As usual, we let $\Frm$ denote the category of frames and frame morphisms.

A typical example of a frame is the complete lattice $\Omega X$ of open sets of a topological space $X$, and of a co-frame the complete lattice $\Gamma X$ of closed sets of $X$. The assignment $X \mapsto \Omega X$ is the object part of the functor $\Omega:\Top\to \Frm^{\mathrm{op}}$, which sends each continuous map $f:X\to Y$ to the preimage map $f^{-1}:\Omega Y\to\Omega X$. The functor $\Omega$ has a right adjoint, namely the 
functor $\pt:\Frm^{\mathrm{op}}\to \Top$, which maps each frame $L$ to the space of its {\em points} (completely prime filters) with the topology given by $\sigma_L[L]$, where $\sigma_L(a) = \{P \in \pt L \mid a \in P \}$ for each $a\in L$.
The functor $\pt$ sends each frame morphism $f:L\to M$ to the continuous map $f^{-1}:\pt M \to\pt L$.

A frame $L$ is \emph{spatial} provided points of $L$ separate non-comparable elements of $L$, and a space $X$ is {\em sober} if each irreducible closed set is the closure of a unique point. The adjunction $\Omega \dashv \pt$ restricts to an equivalence between the full subcategories $\SFrm$ of spatial frames and $\Sob$ of sober spaces (see \cite[Ch.~II]{PicadoPultr2012} for details): 
\[\begin{tikzcd}
	\Top && {\Frm^{\mathrm{op}}} \\
	\\
	\Sob && {\SFrm^{\mathrm{op}}}
	\arrow["\Omega",shift left=2, from=1-1, to=1-3]
	\arrow["\bot"',"\pt", shift left=2, from=1-3, to=1-1]
	\arrow["\mathrm{full}", hook, from=3-1, to=1-1]
	\arrow["\cong",<-> ,from=3-1, to=3-3]
   \arrow["\mathrm{full}"', hook, from=3-3, to=1-3]
\end{tikzcd}\]

More important for our purposes is the $T_D$-duality of Banaschewski and Pultr \cite{banaschewskitd}. We recall \cite{aull62} that a topological space $X$ is a {\em $T_D$-space} if each point $x \in X$ is locally closed (closed in some open neighborhood of $x$). Let $\TDTop$ be the full subcategory of $\Top$ consisting of $T_D$-spaces.\footnote{In \cite[3.1]{banaschewskitd} this category is denoted by $\Top_D$.} If $f:X\to Y$ is a continuous map between $T_D$-spaces, then $f^{-1}:\Omega Y\to\Omega X$ has an extra property. To describe it, we recall that an element $a$ of a poset $P$ is covered by another element $b$ if $a < b$ and from $a \leq x \leq b$ it follows that $a = x$ or $x = b$. In this case we write $a \lessdot b$. An element $a$ is said to be \emph{covered} if $a \lessdot b$ for some $b$.

\begin{definition} \label{def: slicing filter}
     \cite[ Sec.~2.6]{banaschewskitd} A filter $F$ of a frame $L$ is \emph{slicing} if $F$ is prime and there exist $b \in F$ and $a \not \in F$ with $a \lessdot b$.
\end{definition}

The \emph{$T_D$-spectrum} of a frame $L$ is defined to be the collection $\pt_D L$ of slicing filters of $L$, topologized by setting the opens to be the elements of the form $\delta(a) = \{x \in \pt_D L \mid a \in x\}$. As was shown in \cite[Lem.~2.6.1]{banaschewskitd}, $\pt_D L$ is a subspace of $\pt L$. The dual frame morphisms of continuous maps between $T_D$-spaces have the following extra property: 

\begin{definition} \label{def: D morphism}
\cite[ Sec.~3.1]{banaschewskitd}
    A frame morphism $f : L \to M$ is a \emph{D-morphism} provided $f^{-1}(F)$ is a slicing filter of $L$ for each slicing filter $F$ of $M$.  
\end{definition}

Let $\FrmD$ denote the category of frames and D-morphisms between them. Clearly $\FrmD$ is a wide subcategory of $\Frm$.
A frame $L$ is said to be \emph{$T_D$-spatial} if it is isomorphic to $\Omega X$ for some $T_D$-space $X$. Let $\TDSFrmD$ be the full subcategory of $\FrmD$ determined by these objects. We then have (see \cite[Prop.~3.5.1]{banaschewskitd}):

\begin{theorem} [$T_D$-duality] \label{thm: TD duality of BP}
    $(\Omega,\pt_D)$ is an adjunction between $\TDTop$ and $\FrmD^{\mathrm{op}}$, which restricts to an equivalence between $\TDTop$ and $\TDSFrmD^{\mathrm{op}}$.
\[\begin{tikzcd}
	\TDTop && {\FrmD^{\mathrm{op}}} \\
	\\
	\TDTop && {\TDSFrmD^{\mathrm{op}}}
	\arrow["\Omega",shift left=2, from=1-1, to=1-3]
	\arrow["\bot"',"\pt_D", shift left=2, from=1-3, to=1-1]
	\arrow["=",rightarrow, from=3-1, to=1-1]
    \arrow[leftarrow, from=3-1, to=1-1]
	\arrow["\cong",<->, from=3-1, to=3-3]
  \arrow["\mathrm{full}"', hook, from=3-3, to=1-3]
\end{tikzcd}\]
 \end{theorem}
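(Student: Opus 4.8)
The plan is to reconstruct the argument of \cite{banaschewskitd}: set up $\Omega$ and $\pt_D$ as a contravariant adjoint pair with explicit unit and counit, and then locate precisely where the unit and counit are isomorphisms.

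First I would fix the candidate unit and counit. For a $T_D$-space $X$ let $\eta_X \colon X \to \pt_D\Omega X$ send $x$ to its open neighbourhood filter $\{U \in \Omega X \mid x \in U\}$, and for a frame $L$ let $\varepsilon_L = \delta \colon L \to \Omega\pt_D L$. The routine bookkeeping then consists of: (i) $\pt_D L$ is a topological space and $\delta$ is a surjective frame homomorphism $L \twoheadrightarrow \Omega\pt_D L$; (ii) a D-morphism $f\colon L\to M$ induces, by the very definition of D-morphism, a well-defined $\pt_D M \to \pt_D L$, $F \mapsto f^{-1}(F)$, which is continuous since the preimage of $\delta_L(a)$ is $\delta_M(f(a))$; (iii) $\eta_X$ is continuous, and in fact open onto its image, because $\eta_X^{-1}(\delta(U)) = U$. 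Granting this, the naturality squares and triangle identities are formal, so $(\Omega,\pt_D)$ will be a contravariant adjunction as soon as the functors are shown to land where claimed --- i.e.\ that a continuous map between $T_D$-spaces dualizes to a D-morphism, and that $\pt_D L$ is always a $T_D$-space. Both of these reduce to the structure of slicing filters, which is the real content.

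The heart of the proof is therefore a rigidity statement about slicing filters, which I would extract from join-infinite distributivity. Suppose $F$ is slicing with witnesses $b\in F$, $a\notin F$, $a\lessdot b$. Then: (a) $F$ is completely prime --- if $\bigvee S\in F$ but no member of $S$ lies in $F$, each $a\vee(b\wedge s)$ lies between $a$ and $b$ and cannot equal $b$ (that would force $a\vee s$, hence $s$, into $F$), so $b\wedge s\le a$ for all $s$, whence $b\wedge\bigvee S = \bigvee_s(b\wedge s)\le a$, contradicting $b,\bigvee S\in F$, $a\notin F$; this is \cite[Lem.~2.6.1]{banaschewskitd}. The same computation yields (b) $F$ is pinned down by the covering pair: $F = \{c\in L \mid b\le a\vee c\}$. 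From (b) two things follow at once. On the frame side, $\delta(b)\setminus\delta(a) = \{F\}$, so $\{F\}$ is closed in the open set $\delta(b)$, and hence $\pt_D L$ is a $T_D$-space. On the spatial side, if $X$ is a $T_D$-space and $F$ is a slicing filter of $\Omega X$, then by (a) $F = \{U \mid U\cap C\ne\emptyset\}$ for a unique irreducible closed set $C$; choosing $x\in b\cap C$ (nonempty since $b\in F$), the identity $a\vee(b\wedge U) = b$ that (b) forces for each $U\in F$ gives $b\setminus a\subseteq U$, so $x$ lies in every open set meeting $C$, i.e.\ $C = \overline{\{x\}}$ and $F = \eta_X(x)$. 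Thus $\eta_X$ is surjective; being also continuous, open onto its image, and injective (a $T_D$-space is $T_0$), it is a homeomorphism. Feeding this back, $F\mapsto f^{-1}(F)$ carries $\eta_X(x)$ to $\eta_Y(f(x))$, a slicing filter, so $\Omega$ indeed maps into $\FrmD^{\mathrm{op}}$.

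Finally I would read off the equivalence from the triangle identities. Since $\eta_X$ is an isomorphism for every $T_D$-space $X$, the identity $\Omega(\eta_X)\circ\varepsilon_{\Omega X} = \mathrm{id}_{\Omega X}$ forces $\varepsilon_{\Omega X}$ to be an isomorphism of frames, hence of $\FrmD$; transporting along an isomorphism $L\cong\Omega X$ with $X$ a $T_D$-space shows $\varepsilon_L$ is an isomorphism for every $T_D$-spatial $L$. Hence the adjunction restricts to an equivalence between $\TDTop$, on which $\eta$ is a natural isomorphism, and $\TDSFrmD^{\mathrm{op}}$, on which $\varepsilon$ is a natural isomorphism. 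The one genuinely non-formal step --- and the place where I expect the work to concentrate --- is the rigidity lemma: that a slicing filter is determined by a covering witness and, in a $T_D$-space, is consequently a point neighbourhood filter. Everything else is adjunction bookkeeping, and this is the only point where the covering condition in the definition of slicing filter and the $T_D$-hypothesis are actually used.
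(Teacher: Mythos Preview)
The paper does not prove this theorem: it is quoted from \cite[Prop.~3.5.1]{banaschewskitd} as background in the preliminaries, so there is no in-paper argument to compare against. Your proposal is a faithful and essentially correct reconstruction of the Banaschewski--Pultr proof, with the rigidity lemma (a slicing filter is completely prime and is determined by any covering witness via $F=\{c\mid b\le a\vee c\}$) doing exactly the work it does in the original.

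Two small points you glide over: first, you should check explicitly that $\eta_X$ lands in $\pt_D\Omega X$, i.e.\ that the neighbourhood filter of a point in a $T_D$-space is slicing (take $b$ an open neighbourhood in which $\{x\}$ is closed and $a=b\setminus\{x\}$); this is where the $T_D$ hypothesis enters for well-definedness, not only for injectivity. Second, in your surjectivity argument you pick $x\in b\cap C$ and then use $x\in b\setminus a$; this is fine because $a\notin F$ forces $a\cap C=\emptyset$, but it deserves a word. Neither is a genuine gap.
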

The $\pt$ and $\pt_D$ functors are in general different, even for $T_D$-spatial frames.
Hence, 
the $T_D$-duality is not a restriction of the $\Omega\dashv \pt$ adjunction. But the functor $\Omega$ is the same in both cases, thus we do have the following commutative square: 

\[\begin{tikzcd}
	\Top && {\Frm^{\mathrm{op}}} \\
	\\
	\TDTop && {\TDSFrmD^{\mathrm{op}}}
	\arrow["\Omega",from=1-1, to=1-3]
	\arrow["\mathrm{full}", hook, from=3-1, to=1-1]
	\arrow["\cong",<->, from=3-1, to=3-3]
	\arrow["\mathrm{nonfull}"', hook,dashed, from=3-3, to=1-3]
\end{tikzcd}\]

We emphasize that not all frame morphisms between $T_D$-spatial frames are D-morphisms. The next example illustrates this. 

\begin{example}\label{e:notDmorphism}
    Let $X:=\{*\}$ be a singleton space and $Y$ the natural numbers equipped with the Alexandroff topology (where opens are precisely the upper sets). It is easy to see that both $X$ and $Y$ are $T_D$-spaces. Moreover, $\Omega X$ is isomorphic to the two-element boolean algebra $2 = \{ 0,1 \}$ and $\Omega  Y$ is isomorphic to $(\omega+1)^{\mathrm{op}}$. Define $f:\Omega  Y\to \Omega X$ by $f(a)=1$ iff $a\neq 0$. It is straightforward to check that $f$ is a frame morphism. Furthermore, $F:=\{1\}$ is a slicing filter in $\Omega X$, but $f^{-1}(F)=\Omega Y\setminus\{\varnothing\}$ is not a slicing filter in $\Omega Y$. Thus, $f$ is not a D-morphism.
    \end{example}

    \begin{remark}
        In the above example,
       no continuous map between the spaces $X$ and $Y$ can give rise to $f : \Omega Y \to \Omega X$ since otherwise $*$ would have to be mapped to a point whose open neighborhoods are all nonempty opens of $Y$, and such a point does not exist in $Y$. In fact, all frame morphisms between $T_D$-spatial frames that come from continuous maps between $T_D$-spaces are D-morphisms (as will be evident from \cref{thm: MT duality for T0 and TD 2,cor: TD frames and MT} below).
    \end{remark}

\subsection{Interior algebras and MT-algebras}\label{Interior algebras and MT-algebras}

The following definitions go back to McKinsey and Tarski \cite{MT1944} (see also \cite{RS1963,Esa2019}).

\begin{definition} 
An {\em interior operator} on a boolean algebra $A$ is a unary function $\square : A\to A$ satisfying Kuratowski's axioms 
for all $a,b\in A$:
\begin{itemize}
    \item $\square 1=1$.
    \item $\square(a \wedge b) = \square a \wedge \square b$. 
    \item $\square a \leq a$. 
    \item $\square a \leq \square \,\square a$. 
\end{itemize}
An {\em interior algebra} is a pair $(A,\square)$ with $A$ a boolean algebra and $\square$ an interior operator on~$A$. 
\end{definition}

Recall (see, e.g., \cite[Sec.~III.3]{RS1963}) that a {\em morphism of interior algebras} is a boolean homomorphism $f:A\to B$ such that $f(\Box a) = \Box f(a)$ for each $a\in A$. We will be interested in the following weaker condition: $f(\Box a) \le \Box f(a)$ for each $a\in A$. Such morphisms have been studied in the literature under different names: continuous morphisms \cite{Ghilardi2010}, stable morphisms \cite{BBI2016}, or semi-homomorphisms \cite{BMM08}. For the purposes of this paper, we will follow \cite{Ghilardi2010} in calling them {\em  continuous morphisms}. 

\begin{definition}\ \label{def: IA and Int}
    \begin{enumerate}
        \item Let $\IA$ be the category of interior algebras and interior algebra morphisms.
        \item Let $\IAC$ be the category of interior algebras and continuous morphisms.
    \end{enumerate}
\end{definition}

Clearly $\IA$ is a wide subcategory of $\IAC$, and in both categories, compostion is function composition and identity morphisms are identity functions.
\begin{definition} 
Let $A$ be an interior algebra. 
\begin{enumerate}
\item An element $a \in A$ is {\em open} if $a= \square a$. 
\item An element $a\in A$ is {\em closed} if $a=\diamond a$ where $\diamond a = \neg \square \neg \centering a$.
\item An element $a \in A$ is \emph{locally closed} if $a= \square b \wedge \diamond c$ for some $b,c \in A$.\footnote{Equivalently, $a$ is locally closed provided $a=u\wedge\Diamond x$ for some $u\in\O A$.\label{footnote 2}} 
\end{enumerate}
Let $\sfO A$, $\sfC A$, and $\LC A$ be the collections of open, closed, and locally closed elements of $A$, respectively.
\end{definition}

Observe that $\sfO A$ is a bounded sublattice of $A$ and $\square:A\to\sfO A$ is right adjoint to the inclusion $\sfO A\hookrightarrow A$, yielding that $\sfO A$ is a Heyting algebra (see, e.g., \cite[Sec.~2.5]{Esa2019}). Similarly, $\sfC A$ is a bounded sublattice of $A$ and $\diamond:A\to\sfC A$ is left adjoint to the inclusion $\sfC A\hookrightarrow A$, yielding that $\sfC A$ is a co-Heyting algebra. We point out that the  implication on $\sfO A$ is given by $u \rightarrow v = \square(\neg u \vee v)$ for all $u , v \in \sfO A$, and the co-implication on $\sfC A$ by $c \leftarrow d = \diamond(d \wedge \neg c)$ for all $c , d \in \sfC A$. Moreover, $\LC A$ is closed under finite meets, and closing $\LC A$ under finite joins gives the boolean subalgebra of $A$ generated by $\O A$ (or $\C A$). 

\begin{definition}\ \label{defn: MT algebra}
\begin{enumerate}
\item An interior algebra $(A,\square)$ is a {\em McKinsey-Tarski algebra} or an {\em MT-algebra} if $A$ is a complete boolean algebra.
\item  An {\em MT-morphism} between MT-algebras $M$ and $N$ is a complete boolean homomorphism $h : M\to N$ such that $h(\square a) \leq \square h(a)$ for each $ a \in M$.
\item Let $\MT$ be the category of MT-algebras and MT-morphisms. 
\end{enumerate}
\end{definition}

Since each MT-algebra $M$ is complete and $\Box:M\to\sfO M$ is right adjoint to the inclusion, $\sfO M$ is a subframe of $M$. In fact, we can equivalently think of MT-algebras as pairs $(A,L)$ where $A$ is a complete boolean algebra and $L$ is a subframe of $A$. Then the interior operator on $A$ is given by 
\[
\Box a = \bigvee \{ b \in L \mid b \le a \}.
\]
Moreover, if $f:M \to N$ is an MT-morphism, then its restriction $f|_{\sfO M}:\sfO M \to\sfO N$ is a frame morphism that sends identity morphisms to identity morphisms and respects composition. We thus obtain:

\begin{theorem} {\em \cite[Thm.~3.10]{BezhanishviliRR2023}} \label{thm: O functor} 
The assignment $M \mapsto \sfO M$ and $f \mapsto f|_{\sfO M}$ yields a functor $\O : \MT \to \Frm$.
\end{theorem}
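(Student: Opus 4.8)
The plan is to verify three things in turn: that $\sfO M$ is a frame for every MT-algebra $M$, that the restriction $f|_{\sfO M}$ is a frame morphism whenever $f$ is an MT-morphism, and that this assignment preserves identities and composition.

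For the object part, I would note that $\sfO M$ is precisely the set of fixpoints of $\square$. Since $\square$ preserves finite meets and $\square 1 = 1$, the set $\sfO M$ is closed under the finite meets computed in $M$. For arbitrary joins, let $S \subseteq \sfO M$ and put $b = \bigvee S$ in $M$; the third Kuratowski axiom gives $\square b \le b$, while $\square b \ge \square s = s$ for each $s \in S$ gives $\square b \ge \bigvee S = b$, so $b = \square b \in \sfO M$. Hence joins and finite meets in $\sfO M$ agree with those in $M$, and since $M$ is a complete boolean algebra (in particular a frame), $\sfO M$ inherits the join-infinite distributive law and is a frame. (Equivalently, one may simply invoke the observation preceding the statement that $\square : M \to \sfO M$ is right adjoint to the inclusion, so $\sfO M$ is a subframe of $M$.)

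For the morphism part, let $f : M \to N$ be an MT-morphism. First I would check that the restriction is well defined: for $u \in \sfO M$ we have $u = \square u$, hence $f(u) = f(\square u) \le \square f(u) \le f(u)$, which forces $f(u) = \square f(u)$, i.e. $f(u) \in \sfO N$. Thus $f|_{\sfO M}$ is a genuine map $\sfO M \to \sfO N$. Since $f$ is a complete boolean homomorphism, it preserves arbitrary joins and all finite meets (including the top element), and by the previous paragraph these are computed the same way in $\sfO M$ and $\sfO N$ as in $M$ and $N$; therefore $f|_{\sfO M}$ preserves arbitrary joins and finite meets, i.e. it is a frame morphism.

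Finally, functoriality is routine: $(\id_M)|_{\sfO M} = \id_{\sfO M}$, and for composable MT-morphisms $f : M \to N$ and $g : N \to P$, both $(g \circ f)|_{\sfO M}$ and $(g|_{\sfO N}) \circ (f|_{\sfO M})$ are the restriction of the single function $g \circ f$ to $\sfO M$ — which makes sense precisely because $f(\sfO M) \subseteq \sfO N$, as established above. I do not anticipate any genuine obstacle here; the only place the defining inequality of MT-morphisms (as opposed to equality) is used is the well-definedness of the restriction, and the only place completeness of $M$ is used is to guarantee that joins in $\sfO M$ coincide with joins in $M$, which is exactly what lets join-preservation pass from $f$ to $f|_{\sfO M}$.
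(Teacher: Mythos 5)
Your proof is correct and follows essentially the same route the paper takes (it cites the result to \cite{BezhanishviliRR2023} but the surrounding discussion gives exactly this argument: $\sfO M$ is a subframe of the complete boolean algebra $M$ because $\square$ is right adjoint to the inclusion, and the inequality $f(\square a)\le\square f(a)$ forces $f$ to map opens to opens, so the restriction is a frame morphism). Your elementary verification that joins and finite meets in $\sfO M$ are computed as in $M$, and that this is what transfers join-preservation to the restriction, is precisely the content of that sketch.
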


A typical example of an MT-algebra is  the powerset algebra $(\P X, \Box)$ of a topological space $X$, where $\Box$ is the interior operator on $X$. The assignment $X \mapsto \P X$ extends to a functor ${\P : \Top \to \MT^{\mathrm{op}}}$, where a continuous map $f:X\to Y$ is sent to the MT-morphism $f^{-1}:\P Y \to \P X$. To define a functor in the opposite direction, for an MT-algebra $M$, let $\at M$ be the set of atoms of $M$. For $a\in M$, define 
\[
\eta_M(a) = \{ x \in \at M \mid x \le a \}.
\]
Then $\{ \eta_M(a) \mid a \in M \}$ is a topology on $\at M$, so $\P \at M$ is an MT-algebra and $\eta_M: M \to \P\at M$  is an onto MT-morphism. Moreover, if $f:M\to N$ is an MT-morphism, then it has a left adjoint (since it is a complete boolean homomorphism). The restriction of the left adjoint is then a well-defined continuous map $f^* : \at N \to \at M$. This defines a functor $\at : \MT^{\mathrm{op}} \to \Top$, which is right adjoint to $\P$. 

We call $M$ {\em spatial} provided $\eta_M:M\to\P\at M$ is one-to-one (in which case it is an isomorphism of MT-algebras). 
Let $\SMT$ be the full subcategory of $\MT$ consisting of spatial MT-algebras. For each $X \in \Top$, let $\varepsilon_X:X \to \at\P X$ be given by $\varepsilon_X(x)=\{x\}$. Then $\varepsilon_X$ is a homeomorphism and we have (see \cite[Thm.~3.22]{BezhanishviliRR2023}):

\begin{theorem} [MT-duality] \label{MT duality}
$(\P,\at)$ is an adjunction between $\Top$ and $\MT^{\mathrm{op}}$ whose unit is given by $\varepsilon:1_{\Top} \to \at \circ \P$ and counit by $\eta:\P \circ \at \to 1_{\MT}$. This adjunction restricts to an equivalence between $\Top$ and $\SMT^{\mathrm{op}}$.
\[\begin{tikzcd}
	\Top && {\MT^{\mathrm{op}}} \\
	\\
	\Top && {\SMT^{\mathrm{op}}}
	\arrow["\P",shift left=2, from=1-1, to=1-3]
	\arrow["\bot"',"\at", shift left=2, from=1-3, to=1-1]
	\arrow["=",  from=3-1, to=1-1]
    \arrow[  from=1-1, to=3-1]
	\arrow["\cong",<->,from=3-1, to=3-3]
    \arrow["\mathrm{full}"', hook, from=3-3, to=1-3]
\end{tikzcd}\]
\end{theorem}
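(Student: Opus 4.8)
The statement to prove is Theorem \ref{MT duality} (MT-duality): $(\P,\at)$ is an adjunction between $\Top$ and $\MT^{\mathrm{op}}$ with the specified unit and counit, restricting to an equivalence between $\Top$ and $\SMT^{\mathrm{op}}$.

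My plan is to verify the adjunction by checking the triangle identities for the candidate unit $\varepsilon$ and counit $\eta$, after first confirming that these are natural transformations. I would proceed as follows.

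\textbf{Step 1: Functoriality of $\P$ and $\at$.} I would first confirm that $\P:\Top\to\MT^{\mathrm{op}}$ is well-defined on morphisms: if $f:X\to Y$ is continuous, then $f^{-1}:\P Y\to\P X$ preserves arbitrary joins, meets, and complements (it is a complete boolean homomorphism), and continuity gives $f^{-1}(\Box_Y b)\subseteq\Box_X f^{-1}(b)$ since $f^{-1}(\Box_Y b)$ is open in $X$ and contained in $f^{-1}(b)$; so $f^{-1}$ is an MT-morphism, i.e.\ a morphism in $\MT^{\mathrm{op}}$ from $\P X$ to $\P Y$. For $\at:\MT^{\mathrm{op}}\to\Top$, given an MT-morphism $f:M\to N$, its left adjoint $f_\ast$ (which exists since $f$ preserves arbitrary meets, being a complete boolean homomorphism) restricts to atoms: for $x\in\at N$, $f_\ast(x)$ is an atom of $M$ because $f_\ast$ preserves atoms (this uses that $f$ is a complete boolean homomorphism, so $f_\ast$ preserves joins and $f_\ast(x)\ne 0$ unless $x=0$, together with a standard argument that the left adjoint of a complete boolean homomorphism preserves atoms). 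One then checks $f^\ast:=f_\ast|_{\at N}:\at N\to\at M$ is continuous with respect to the atom topologies, and that $(\mathrm{id})^\ast=\mathrm{id}$ and $(g\circ f)^\ast=f^\ast\circ g^\ast$. Here care must be taken with the op: $\at$ sends $f:M\to N$ in $\MT$ to $f^\ast:\at N\to\at M$, so as a functor $\MT^{\mathrm{op}}\to\Top$ it is covariant.

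\textbf{Step 2: Naturality of $\varepsilon$ and $\eta$.} For $\varepsilon_X:X\to\at\P X$, $x\mapsto\{x\}$, I would check $\{x\}$ is indeed an atom of $\P X$ (clear) and that $\varepsilon_X$ is a homeomorphism onto $\at\P X$ with the atom topology — the opens of $\at\P X$ are $\{\eta_{\P X}(U)\mid U\in\P X\}$, and $\eta_{\P X}(U)=\{\{x\}\mid x\in U\}$, so $\varepsilon_X$ and its inverse are both continuous (in fact open), giving a homeomorphism; naturality in $X$ is a direct diagram chase using that $(f^{-1})^\ast$ on atoms is computed via the left adjoint of $f^{-1}$, which sends $\{x\}\mapsto\{f(x)\}$. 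For $\eta_M:M\to\P\at M$, $a\mapsto\{x\in\at M\mid x\le a\}$, I would verify it is an onto MT-morphism (preservation of joins, meets, complements is routine from atomicity of the complete boolean algebra $M$; ontoness and the inequality $\eta_M(\Box a)\subseteq\Box\eta_M(\Box a)$ hold because $\{\eta_M(a)\}$ is declared to be the topology, so $\eta_M(\Box a)$ is by definition open) and that $\eta$ is natural: for an MT-morphism $f:M\to N$ one checks $\eta_N\circ f = (\text{something})\circ\eta_M$ as morphisms in $\MT^{\mathrm{op}}$, which unwinds to $f^{-1}$ of $\eta_N$ equalling the $\P f^\ast$ image of $\eta_M$, a set-theoretic identity about atoms and left adjoints.

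\textbf{Step 3: Triangle identities and the restricted equivalence.} With $\varepsilon$, $\eta$ natural, the two triangle identities $\at\varepsilon_X\circ\eta_{\at\P X}=\mathrm{id}$ (wait, with the op one identity reads $\eta_{\P X}\circ\P\varepsilon_X=\mathrm{id}_{\P X}$ in $\MT^{\mathrm{op}}$ and the other $\at\eta_M\circ\varepsilon_{\at M}=\mathrm{id}_{\at M}$ in $\Top$) become straightforward computations: both reduce to the observation that $\eta_{\P X}$ composed with the singleton map is the identity on $\P X$, and dually. Finally, for the restriction: $\varepsilon_X$ is always a homeomorphism, so the counit-side is an iso for every $X$; and $\eta_M$ is an iso in $\MT$ precisely when it is injective, which is the definition of $M$ being spatial. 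Hence the adjunction restricts to an adjoint equivalence between $\Top$ and $\SMT^{\mathrm{op}}$, and $\SMT$ is a full subcategory of $\MT$ by definition.

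\textbf{Main obstacle.} The routine-looking but genuinely delicate point is Step 1's claim that the left adjoint $f_\ast$ of an MT-morphism restricts to a well-defined \emph{continuous} map on atoms — in particular that $f_\ast$ sends atoms to atoms. This rests on the fact that a complete boolean homomorphism between atomic complete boolean algebras is, up to the $\eta$-isomorphisms, a preimage map of a function between the atom sets; making this precise (and checking the MT-inequality $f(\Box a)\le\Box f(a)$ translates exactly into continuity of that function) is the technical heart. Everything else is bookkeeping with the opposite category, which I would handle carefully but expect to be unproblematic. (Since \cite{BezhanishviliRR2023} is cited for this theorem, I would also be content to cite the relevant lemmas there for the atom-preservation fact and simply assemble the adjunction.)
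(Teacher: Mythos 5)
Your outline is correct and is essentially the standard argument (the paper itself does not reprove this statement but cites \cite[Thm.~3.22]{BezhanishviliRR2023}): establish functoriality of $\P$ and $\at$, naturality of $\varepsilon$ and $\eta$, the two triangle identities, and then note that $\varepsilon_X$ is always a homeomorphism while $\eta_M$ is an isomorphism exactly when $M$ is spatial; you also correctly isolate the one genuinely delicate point, namely that the left adjoint of a complete boolean homomorphism sends atoms to atoms (because $\{a\in M\mid x\le f(a)\}$ is a complete ultrafilter, hence principal and generated by an atom) and that the inequality $f(\square a)\le\square f(a)$ is precisely continuity of the induced map on atoms. One correction to Step 2: the topology on $\at M$ must be $\{\eta_M(u)\mid u\in\sfO M\}$, not $\{\eta_M(a)\mid a\in M\}$ as you (following a slip in the paper's preliminaries) write --- since $\eta_M$ is surjective onto $\P\at M$ by the join-infinite distributive law, the latter collection is the full powerset, $\at\P X$ would be discrete, and $\varepsilon_X$ would fail to be continuous for non-discrete $X$; with the corrected topology your argument goes through as stated.
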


We conclude this preliminary section by recalling the MT-algebra analogues of $T_D$ and $T_0$-spaces. 
An element of an MT-algebra $M$ is \emph{saturated} if it is a meet from $\O M$. Let $\mathcal{S} M$ be the collection of saturated elements of $M$. We call $a\in M$ {\em weakly locally closed} if $a = s \wedge c$ where $s\in\mathcal{S} M$ and $c\in \C M$. Let $\mathcal{WLC} M$ be the collection of weakly locally closed elements of $M$. 

\begin{definition} \label{def: T0 and TD algebras}
    An MT-algebra $M$ is said to be a {\em $T_D$-algebra} if $\mathcal{LC} M$ join-generates $M$ and a {\em $T_0$-algebra} if $\mathcal{WLC} M$ join-generates $M$.
\end{definition}

Let $\TDMT$ be the full subcategory of $\MT$ consisting of $T_D$-algebras, let $\STDMT$ be the full subcategory of $\TDMT$ consisting of spatial $T_D$-algebras, and define $\ToMT$ and $\STOMT$ similarly. Let also $\TDTop$ be the full subcategory of $\Top$ consisting of $T_D$-spaces, and define $\ToTop$ similarly. Then  MT-duality restricts to yield the following (see \cite[Thms.~5.7, 6.4]{BezhanishviliRR2023}):

\begin{theorem}\ \label{thm: MT duality for T0 and TD}
    \begin{enumerate}[ref=\thetheorem(\arabic*)]
        \item The adjunction $(\P,\at)$ restricts to an adjunction between {\em $\ToTop$} and {\em $\ToMT^{\mathrm{op}}$}, yielding an equivalence between {\em $\ToTop$} and {\em $\STOMT^{\mathrm{op}}$}.\label[theorem]{thm: MT duality for T0 and TD 1}
        \item The adjunction $(\P,\at)$ further restricts to an adjunction between {\em $\TDTop$} and {\em $\TDMT^{\mathrm{op}}$}, yielding an equivalence between {\em $\TDTop$} and {\em $\STDMT^{\mathrm{op}}$}.\label[theorem]{thm: MT duality for T0 and TD 2}
    \end{enumerate}
\end{theorem}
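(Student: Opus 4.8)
The plan is to obtain both statements by restricting the MT-duality already established in \cref{MT duality}. Since $\ToTop$ and $\TDTop$ are full subcategories of $\Top$ and $\ToMT,\TDMT,\STOMT,\STDMT$ are full subcategories of $\MT$, it suffices to verify two object-level facts, after which the restricted adjunctions and equivalences — carrying along the restricted unit $\varepsilon$ and counit $\eta$ — follow formally. The facts are: \textbf{(A)} for a space $X$, the algebra $\P X$ is a $T_0$-algebra (resp.\ a $T_D$-algebra) if and only if $X$ is a $T_0$-space (resp.\ a $T_D$-space); and \textbf{(B)} for an MT-algebra $M$, if $M$ is a $T_0$-algebra then $\at M$ is a $T_0$-space, and if $M$ is a $T_D$-algebra then $\at M$ is a $T_D$-space.

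For \textbf{(A)} I would use that singletons join-generate $\P X$ and that a join of nonempty sets equals a singleton only when one of the summands already is that singleton; hence $\mathcal{WLC}\,\P X$ (resp.\ $\LC\,\P X$) join-generates $\P X$ if and only if every $\{x\}$ is weakly locally closed (resp.\ locally closed) in $\P X$. In the powerset algebra, the saturated elements are exactly the intersections of open sets, i.e.\ the up-sets of the specialization preorder, while a locally closed element is, by the footnote to the definition of local closedness, an intersection of an open set with a closed set. The matter then reduces to two observations: $\{x\}$ is the intersection of its saturation $\upset x$ with its closure precisely when the specialization preorder is antisymmetric at $x$, i.e.\ when $X$ is $T_0$; and $\{x\}=U\cap C$ with $U$ open and $C$ closed precisely when $x$ is closed in one of its open neighbourhoods, i.e.\ when $X$ is $T_D$ in the sense of \cite{aull62}. (This is, in essence, \cite[Thms.~5.7, 6.4]{BezhanishviliRR2023}.)

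For \textbf{(B)}, fix an atom $x$ of $M$. Because the relevant family join-generates $M$ and $x$ is an atom, $x$ itself must be weakly locally closed (resp.\ locally closed): $x=s\wedge c$ with $s=\bigwedge_i u_i$ a meet of open elements and $c$ closed (resp.\ $x=u\wedge\diamond c$ with $u\in\O M$ and $c\in M$). In the $T_0$ case I would argue contrapositively: if some two distinct atoms $x\ne y$ were separated by no open element — so $x\le v\iff y\le v$ for all $v\in\O M$ — then $x\le u_i$ gives $y\le u_i$ for every $i$, whence $y\le s$; moreover $x\le c$ is equivalent to $x\not\le\neg c$ with $\neg c$ open, and likewise for $y$, so $y\le c$; therefore $y\le s\wedge c=x$, forcing $y=x$, a contradiction. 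Hence the opens of $\at M$ separate points, so $\at M$ is $T_0$. In the $T_D$ case, $\eta_M(u)$ is an open neighbourhood of $x$ in $\at M$, and $\eta_M(u)\cap\eta_M(\diamond c)=\eta_M(u\wedge\diamond c)=\eta_M(x)=\{x\}$ with $\eta_M(\diamond c)$ closed, which exhibits the point $x$ as locally closed; so $\at M$ is $T_D$.

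Given (A) and (B), the remainder is bookkeeping. By (A), $\P$ sends $\ToTop$ into $\ToMT^{\mathrm{op}}$, and since every $\P X$ is spatial, in fact into $\STOMT^{\mathrm{op}}$; by (B), $\at$ sends $\ToMT^{\mathrm{op}}$ into $\ToTop$. Thus $(\P,\at)$ restricts to an adjunction between $\ToTop$ and $\ToMT^{\mathrm{op}}$, and, restricting further on the algebraic side to the full subcategory $\STOMT^{\mathrm{op}}$, to an adjunction between $\ToTop$ and $\STOMT^{\mathrm{op}}$ which is fully faithful on both sides (being a restriction of the equivalence $\Top\simeq\SMT^{\mathrm{op}}$) and essentially surjective (for $M\in\STOMT$ the counit $\eta_M\colon\P\at M\to M$ is an isomorphism with $\at M\in\ToTop$ by (B), and for $X\in\ToTop$ the unit $\varepsilon_X\colon X\to\at\P X$ is a homeomorphism). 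This gives the equivalence $\ToTop\simeq\STOMT^{\mathrm{op}}$ of part~(1); part~(2) is proved word for word with $T_0$, $\mathcal{WLC}$, and $\ToTop,\ToMT,\STOMT$ replaced by $T_D$, $\LC$, and $\TDTop,\TDMT,\STDMT$. I expect the only step that is not routine to be \textbf{(B)} when $M$ fails to be spatial: the crux is that join-generation by (weakly) locally closed elements pins down the atom $x$ as one such element on the nose, after which the separation of $\at M$ is read off directly from the action of $\eta_M$ on open elements and on $\diamond c$.
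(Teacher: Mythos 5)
The paper does not prove this theorem itself but imports it from \cite[Thms.~5.7, 6.4]{BezhanishviliRR2023}, so there is no in-text proof to compare against; your reconstruction is correct and follows the expected route, namely the two object-level characterizations (a space $X$ is $T_0$/$T_D$ iff each $\{x\}$ is weakly locally closed/locally closed in $\P X$, and an atom of a $T_0$-/$T_D$-algebra is itself weakly locally closed/locally closed, whence $\at M$ inherits the separation) followed by formal restriction of the adjunction of \cref{MT duality} along full subcategories. The one imprecision — identifying saturated elements of $\P X$ with arbitrary up-sets of the specialization preorder — is harmless, since all you use is that the saturation of $\{x\}$ is ${\uparrow}x$.
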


Putting \cref{thm: TD duality of BP,thm: MT duality for T0 and TD 2} together, we conclude: 

\begin{corollary}\label{cor: TD frames and MT}
    {\em $\TDSFrmD$} is equivalent to {\em $\STDMT$}.
\end{corollary}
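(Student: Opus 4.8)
The plan is to read off the stated equivalence by composing the two dualities already recorded, with $\TDTop$ serving as the bridge. On one side, the $T_D$-duality of Banaschewski and Pultr (\cref{thm: TD duality of BP}) tells us that $\Omega$ and $\pt_D$ restrict to an equivalence $\TDTop \simeq \TDSFrmD^{\mathrm{op}}$. On the other side, \cref{thm: MT duality for T0 and TD 2} tells us that $\P$ and $\at$ restrict to an equivalence $\TDTop \simeq \STDMT^{\mathrm{op}}$. Composing these along $\TDTop$ gives an equivalence
\[
\TDSFrmD^{\mathrm{op}} \xrightarrow{\ \pt_D\ } \TDTop \xrightarrow{\ \P\ } \STDMT^{\mathrm{op}},
\]
with quasi-inverse $\Omega \circ \at$; passing to opposite categories then yields $\TDSFrmD \simeq \STDMT$, which is the assertion.

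The only thing left is light bookkeeping. On objects the correspondence sends $\Omega X$ to $\P X$ for a $T_D$-space $X$: every object of $\TDSFrmD$ has this form by the definition of $T_D$-spatiality, and every object of $\STDMT$ is, via the unit isomorphism $\eta_M$, of the form $\P X$ for a $T_D$-space $X$ by \cref{thm: MT duality for T0 and TD 2}; on morphisms the correspondence is induced by the two functors above. One can moreover identify the composite $\STDMT^{\mathrm{op}} \to \TDSFrmD^{\mathrm{op}}$, up to natural isomorphism, with the restriction of the open-element functor $\O$ of \cref{thm: O functor}, since $\eta_M$ restricts to a frame isomorphism $\O M \cong \Omega(\at M)$; this furnishes the concrete description $M \mapsto \O M$ on objects.

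I expect no genuine obstacle here: the corollary is a formal consequence of the two cited dualities, and all the substantive content lives in them. The one subtlety worth flagging is that $\TDSFrmD$ is a \emph{full} subcategory of $\FrmD$, which is itself only a \emph{wide} subcategory of $\Frm$ (the D-morphisms between two $T_D$-spatial frames need not exhaust the frame morphisms between them, cf.\ \cref{e:notDmorphism}), whereas $\STDMT$ is a \emph{full} subcategory of $\MT$. This asymmetry is harmless precisely because the equivalence is produced through $\TDTop$ rather than by restricting a functor between the ambient categories $\Frm$ and $\MT$; in particular it is not a priori clear that $\O$ carries every MT-morphism between $T_D$-algebras to a D-morphism, but the composition above establishes exactly this.
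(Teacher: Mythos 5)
Your proposal is correct and matches the paper's intended argument: the corollary is stated immediately after ``Putting \cref{thm: TD duality of BP,thm: MT duality for T0 and TD 2} together,'' i.e.\ the paper also obtains the equivalence by composing the two dualities along $\TDTop$. Your additional bookkeeping (identifying the composite with $\O$ via $\eta_M$, and the remark about fullness) is accurate but not needed beyond what the two cited theorems already provide.
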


As we pointed out after \cref{thm: TD duality of BP}, the inclusion $\TDSFrmD \hookrightarrow \Frm$ is not full. By contrast, the inclusion $\STDMT \hookrightarrow \MT$ is full. Moreover, while the $T_D$-duality for frames is not a restriction of the $\Omega\dashv \pt$ adjuntion (since  $\pt_D$ is not in general the same as $\pt$), the $T_D$-duality for MT-algebras is obtained by restricting the adjunction $\P\dashv \at$.
This is summarized in the two diagrams below: 

\begin{center}
\begin{minipage}{0.4\textwidth}
\[\begin{tikzcd}
	\Top && {\Frm^{\mathrm{op}}} \\
	\\
	\TDTop && {\TDSFrmD^{\mathrm{op}}}
	\arrow["\Omega",from=1-1, to=1-3]
	\arrow["\mathrm{full}", hook, from=3-1, to=1-1]
	\arrow["\cong",<->,from=3-1, to=3-3]
   \arrow["\mathrm{nonfull}"', hook,dashed, from=3-3, to=1-3]
\end{tikzcd}\]    
\end{minipage}
\begin{minipage}{0.4\textwidth}
\[\begin{tikzcd}
	\Top && {\MT^{\mathrm{op}}} \\
	\\
	\TDTop && {\STDMT^{\mathrm{op}}}
	\arrow["\P",shift left=2, from=1-1, to=1-3]
	\arrow["\bot"',"\at", shift left=2, from=1-3, to=1-1]
	\arrow["\mathrm{full}",hook, from=3-1, to=1-1]
  \arrow["\cong",<->, from=3-1, to=3-3]
  \arrow["\mathrm{full}"', hook, from=3-3, to=1-3]
\end{tikzcd}\]
\end{minipage}
\end{center}

\section{Proximity morphisms between MT-algebras}\label{sec: proximity morphisms}

In this section, we show that each MT-algebra can be equipped with a proximity relation, which is a weakening of a de Vries proximity on a boolean algebra \cite{deV62,Bezh2010}. This gives rise to a new category $\MTP$ of MT-algebras and proximity morphisms between them. In \cref{sec: Funayama}, it will be shown that this category is equivalent to $\Frm$.

Given boolean algebras $A,B$, with $B$ a subalgebra of $A$, we define a binary relation $\prec_B$ on $A$ by
\[
a \prec_B c \Longleftrightarrow \exists b \in B : a \le b \le c.
\]
It is straightforward to verify that this relation satisfies the following conditions:
\begin{enumerate}
    \item[(S1)] $1\prec_B 1$;
    \item[(S2)] $a\prec_B c$ implies $a\leq c$;
    \item[(S3)] $a\leq a'\prec_B c'\leq c$ implies $a\prec_B c$;
    \item[(S4)] $a\prec_B c,d$ implies $a\prec_B c\wedge d$;
    \item[(S5)] $a\prec_B c$ implies $\neg c\prec_B \neg a$;
    \item[(S6)] $a\prec_B c$ implies that there is $b\in B$ with $a\prec_B b\prec_B c$.
\end{enumerate}

\begin{remark}\label{rem: de Vries proximity}
   The above are standard proximity axioms on a boolean algebra, except (S6) is a strengthening of the usual in-betweenness axiom. However, in general, $\prec_B$ is not a de Vries proximity on $A$ since it is not necessarily the case that $a=\bigvee \{c\in A\mid c\prec_B a\}$. In fact, $\prec_B$ is a de Vries proximity on $A$ if and only if $B$ join-generates $A$.
\end{remark}

In our considerations, $A$ will be an MT-algebra and $B$ will be the boolean subalgebra of $A$ generated by $\sfO A$. We recall that in the powerset algebra of a topological space, the elements of the boolean subalgebra generated by the frame of opens are exactly the finite unions of locally closed subsets, and are called constructible sets (see, e.g., \cite[p.~94]{Har1977}). In analogy: 

\begin{definition}
     An element $a$ of an MT-algebra $M$ is \emph{constructible} provided $a$ is a finite join from $\LC M$. Let $\cons M$ be the set of constructible elements of $M$.
\end{definition}

Note that $\cons M $ is the boolean subalgebra of $M$ generated by $\sfO  M$, and thus one can consider the associated binary relation, $\prec_{\cons M}$. To simplify notation, we omit the subscript.

\begin{definition} \label{def: proximity}
    Let $M$ be an MT-algebra. An element $a$ is \emph{constructibly below} $b$, or ``\emph{cons-below}", if 
    $a \prec b$ for the binary relation associated with $\cons M$.
    \end{definition}

Interestingly, the cons-below relation on an MT-algebra $M$ is a de Vries proximity precisely when $M$ is a $T_D$-algebra:

\begin{lemma}\label{lem: proximity and TD}
    For any MT-algebra $M$, the con-below relation is a de Vries proximity on $M$ iff $M$ is a $T_D$-algebra.
\end{lemma}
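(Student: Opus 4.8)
The key observation is \cref{rem: de Vries proximity}: the relation $\prec_B$ on a boolean algebra $A$ is a de Vries proximity precisely when $B$ join-generates $A$. Applying this with $A = M$ and $B = \cons M$, the cons-below relation is a de Vries proximity iff $\cons M$ join-generates $M$. So the whole lemma reduces to proving: \emph{$\cons M$ join-generates $M$ iff $M$ is a $T_D$-algebra}, i.e.\ iff $\LC M$ join-generates $M$ (by \cref{def: T0 and TD algebras}).

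\textbf{Main argument.} The implication ($M$ is $T_D$ $\Rightarrow$ $\cons M$ join-generates $M$) is immediate, since $\LC M \subseteq \cons M$, so if $\LC M$ already join-generates $M$ then a fortiori so does the larger set $\cons M$. For the converse, suppose $\cons M$ join-generates $M$. Every constructible element is by definition a finite join $\ell_1 \vee \cdots \vee \ell_n$ of locally closed elements. Hence for any $a \in M$, writing $a = \bigvee_i c_i$ with each $c_i \in \cons M$ and then expanding each $c_i$ as a finite join of locally closed elements, we obtain $a$ as a join of locally closed elements. Therefore $\LC M$ join-generates $M$, i.e.\ $M$ is a $T_D$-algebra. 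Combining with \cref{rem: de Vries proximity} closes the argument.

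\textbf{Where the work really sits.} There is essentially no obstacle here beyond unwinding definitions and citing \cref{rem: de Vries proximity}; the lemma is a formal consequence of the fact that "join-generated by a set of finite joins of elements of $\LC M$" is the same as "join-generated by $\LC M$." The one point that deserves a sentence of care is the justification of \cref{rem: de Vries proximity} itself in the direction we use, namely that if $B$ join-generates $A$ then (S1)--(S6) together with the join-density condition $a = \bigvee\{c \in A \mid c \prec_B a\}$ hold: for $a \in A$ write $a = \bigvee_{i} b_i$ with $b_i \in B$; then each $b_i \le b_i \le a$ witnesses $b_i \prec_B a$, so $a \le \bigvee\{c \mid c \prec_B a\} \le a$ by (S2), giving the join-density. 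Since (S1)--(S6) were already verified in the excerpt, this is all that is needed, and the proof is short.
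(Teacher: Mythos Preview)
Your proof is correct and follows essentially the same approach as the paper's own proof: both invoke \cref{rem: de Vries proximity} to reduce the statement to showing that $\cons M$ join-generates $M$ iff $\LC M$ does, and then use that every constructible element is a finite join of locally closed elements. Your version is simply more detailed, spelling out both directions and the join-density verification, whereas the paper dispatches everything in two sentences.
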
 

\begin{proof}
By \cref{rem: de Vries proximity}, the cons-below relation is a de Vries proximity on $M$ iff $\cons M$ join-generates $M$. Since each element of $\cons M$ is a finite join from $\LC M$, the latter condition is equivalent to $M$ being a $T_D$-algebra.
\end{proof}

Next, in analogy with de Vries algebras, we define proximity morphisms between MT-algebras.

\begin{definition} \label{def: proximitymor}
    For $M, N \in \MT$, a map $f:M \to N$ is a \emph{proximity morphism} provided the following conditions are satisfied:
\begin{enumerate}[label=\upshape(P\arabic*)]
    \item $f| _{\sfO(M)} : \sfO M\to\sfO N$ is a frame morphism.\label{def: proximitymor 1}
    \item $f(a \wedge b)=f(a) \wedge f(b)$ for each $a,b \in M$.\label{def: proximitymor 2}
    \item $f(\bigvee S)=\bigvee\{f(s) \mid s \in S\}$ for each finite $S \subseteq \LC M$.\label{def: proximitymor 3}
    \item $f(a)=\bigvee\{ f(x) \mid x \in \LC M, \, x \leq a\}$ for each $a \in M$.\label{def: proximitymor 4}
\end{enumerate}
\end{definition}

\begin{remark} \label{rem: prox equiv}
    Since each element of $\cons M$ is a finite join from $\LC M$, \ref{def: proximitymor 4} is equivalent to \[
    f(a)=\bigvee\{ f(b) \mid b \in \cons M, \, b \leq a\} \ \mbox{ for each } a \in M.
    \]
\end{remark}

\begin{lemma}\label{l:proxmorproperties}
    Let $f:M \to N$ be a proximity morphism between MT-algebras.
    \begin{enumerate}[ref=\thelemma(\arabic*)]
    \item $f(\neg x)=\neg f(x)$ for each $x \in \sfO M \cup \sfC M$.\label[lemma]{l:proxmorproperties 1}
    \item $f| _{\sfC M} : \sfC M\to\sfC N$ is a co-frame morphism.\label[lemma]{l:proxmorproperties 2}
        \item If $x \in \LC M$ then $f(x) \in \LC N$. \label[lemma]{l:proxmorproperties 3}\label[lemma]{lem: lc to lc}
         \item $f|_{\cons M}:\cons M \to \cons N$ is a boolean homomorphism.\label[lemma]{l:proxmorproperties 4}
    \end{enumerate}
\end{lemma}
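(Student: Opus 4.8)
The plan is to prove the four clauses in the stated order, each drawing on its predecessors together with the axioms \ref{def: proximitymor 1}--\ref{def: proximitymor 4}. Two basic identities come first and will be used repeatedly: $f(0)=0$, which is \ref{def: proximitymor 3} applied to the empty subset of $\LC M$ (so $\bigvee\varnothing = 0$), and $f(1)=1$, since by \ref{def: proximitymor 1} the map $f|_{\O M}$ is a frame morphism and so preserves the top element.

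For \cref{l:proxmorproperties 1}, take first $x\in\O M$; then $\neg x\in\C M$ and both $x,\neg x$ lie in $\LC M$, so $1 = x\vee\neg x$ is a finite join from $\LC M$ and \ref{def: proximitymor 3} gives $f(x)\vee f(\neg x) = f(1) = 1$, while \ref{def: proximitymor 2} applied to $0 = x\wedge\neg x$ gives $f(x)\wedge f(\neg x) = f(0) = 0$. Uniqueness of complements in $N$ then forces $f(\neg x) = \neg f(x)$. For $x\in\C M$, apply this to $\neg x\in\O M$ and use $\neg\neg x = x$.

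Clauses \cref{l:proxmorproperties 2} and \cref{l:proxmorproperties 3} follow quickly. First, $f$ maps $\C M$ into $\C N$: if $c\in\C M$ then $\neg c\in\O M$, so $f(c) = \neg f(\neg c)$ by \cref{l:proxmorproperties 1}, which is the complement of an open element, hence closed. For \cref{l:proxmorproperties 2}, recall that in $M$ the set $\C M$ is closed under finite joins and under arbitrary meets, the latter because the complement of a meet of closed elements is a join of open elements and $\O M$ is a subframe of $M$. Finite joins in $\C M$ are finite joins from $\LC M$, hence preserved by \ref{def: proximitymor 3} (with $f(0)=0$ covering the empty one); an arbitrary meet $\bigwedge_i c_i = \neg\bigvee_i \neg c_i$ is preserved by combining \cref{l:proxmorproperties 1} with the fact that $f|_{\O M}$ preserves arbitrary joins. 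Thus $f|_{\C M}$ preserves arbitrary meets and finite joins, i.e.\ is a co-frame morphism. For \cref{l:proxmorproperties 3}, write $x = \square b\wedge\diamond c$ for some $b,c\in M$; then $f(x) = f(\square b)\wedge f(\diamond c)$ by \ref{def: proximitymor 2}, where $f(\square b)\in\O N$ and $f(\diamond c)\in\C N$ by the steps just made, so $f(x)$ is the meet of an open and a closed element and hence lies in $\LC N$.

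For \cref{l:proxmorproperties 4}, a constructible element of $M$ is a finite join of elements of $\LC M$, so \ref{def: proximitymor 3} and \cref{l:proxmorproperties 3} show $f(\cons M)\subseteq\cons N$. On $\cons M$ the map $f$ preserves finite meets by \ref{def: proximitymor 2}, finite joins by \ref{def: proximitymor 3} (the join of two constructible elements is again a finite join from $\LC M$), and $0$ and $1$; since any bounded-lattice homomorphism between boolean algebras automatically preserves complements (by the argument used for \cref{l:proxmorproperties 1}), $f|_{\cons M}$ is a boolean homomorphism. I do not expect a genuine obstacle: once $f(0)=0$ and $f(1)=1$ are in hand, the lemma is De Morgan bookkeeping. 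The one point needing care is \cref{l:proxmorproperties 1} — one must observe that for open $x$ the decomposition $1 = x\vee\neg x$ is a finite join \emph{from $\LC M$}, which is precisely what legitimizes invoking \ref{def: proximitymor 3} (note that $f$ is not assumed to be additive in general).
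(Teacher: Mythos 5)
Your proposal is correct and follows essentially the same route as the paper: part (1) via $f(x)\vee f(\neg x)=1$ and $f(x)\wedge f(\neg x)=0$ using \ref{def: proximitymor 3} and \ref{def: proximitymor 2}, part (2) by De Morgan from (1) and \ref{def: proximitymor 1}, part (3) from the decomposition $x=\square b\wedge\diamond c$, and part (4) as a bounded lattice homomorphism between boolean algebras. The only differences are cosmetic (treating the open and closed cases of (1) separately, and deriving $f(0)=0$ from the empty join rather than from \ref{def: proximitymor 1}).
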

\begin{proof}
(1) Let $x\in\sfO M \cup \sfC M$. Then $\neg x \in \sfC M \cup \sfO M$. Thus, since $\sfO M \cup \sfC M \subseteq \LC M$, by \ref{def: proximitymor 3},
$f(x) \vee f(\neg x)=f(x \vee \neg x)=f(1)=1$. Moreover, by \ref{def: proximitymor 2},
 $f(x) \wedge f(\neg x) = f(x \wedge \neg x) = f(0) = 0$. Therefore, $f(\neg x)=\neg f(x)$. 

(2) Since $\C M \subseteq \LC M$, the restriction $f| _{\sfC M}$ preserves finite joins by\ref{def: proximitymor 3}. We show that it preserves arbitrary meets. Let $S \subseteq \sfC M$. Then $\neg s \in \sfO M$ for each $s\in S$, so $\bigvee \{ \neg s \mid s \in S \} \in \sfO M$. Therefore, by \ref{def: proximitymor 1} and (1), 
\begin{eqnarray*}
f\left(\bigwedge S\right) &=& f\left(\neg \bigvee \{ \neg s \mid s\in S \}\right) = \neg f\left(\bigvee \{ \neg s \mid s \in S \}\right) \\
&=& \neg \bigvee \{ f(\neg s) \mid s \in S \} = \neg \bigvee \{ \neg f(s) \mid s \in S \} \\
&=& \bigwedge f[S].
\end{eqnarray*}
Thus, $f| _{\sfC M} : \sfC M\to\sfC N$ is a co-frame morphism. 

(3) This follows from \ref{def: proximitymor 1}, \ref{def: proximitymor 2}, and (2).

(4) Since each element of $\cons M$ is a finite join from $\LC M$, it follows from \ref{def: proximitymor 3} and (3) that $f|_{\cons M}$ is well defined. Moreover, by \ref{def: proximitymor 1}--\ref{def: proximitymor 3}, it is a bounded lattice homomorphism, and thus a boolean homomorphism.
\end{proof}

\begin{lemma} \label{lem: eqv morphisms}
    Let $f : M \to N$ be a map between MT-algebras satisfying \ref{def: proximitymor 1}, \ref{def: proximitymor 2}, and \ref{def: proximitymor 4}. The following are equivalent:
    \begin{enumerate}
    [ref=\thelemma(\arabic*)]
        \item $f$ satisfies \ref{def: proximitymor 3}; that is, $f$ is a proximity morphism.
        \item $a_1 \prec b_1$ and $a_2 \prec b_2$ imply $f(a_1 \vee a_2) \prec f(b_1) \vee f(b_2)$ for each $a_i,b_i \in M$.
        \item $a \prec b$ implies $\neg f(\neg a) \prec f(b)$ for each $a,b \in M$. \label[lemma]{lem: eqv morphisms 3}
    \end{enumerate}
\end{lemma}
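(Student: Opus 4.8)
The plan is to establish the cycle of implications $(1)\Rightarrow(3)\Rightarrow(2)\Rightarrow(1)$. First I would record two consequences of the standing hypotheses \ref{def: proximitymor 1}, \ref{def: proximitymor 2}, \ref{def: proximitymor 4} that are used throughout: from \ref{def: proximitymor 2} the map $f$ is monotone (if $a\le b$ then $f(a)=f(a\wedge b)=f(a)\wedge f(b)\le f(b)$), and from \ref{def: proximitymor 1} one gets $f(0)=0$ and $f(1)=1$, since $0,1\in\sfO M$ and frame morphisms preserve the top element and the empty join. I will also use without comment that $\prec$ denotes $\prec_{\cons M}$ on $M$ and $\prec_{\cons N}$ on $N$, each satisfying (S1)--(S6), and that $c\prec c$ whenever $c$ lies in $\cons M$ (resp.\ $\cons N$).

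For $(1)\Rightarrow(3)$, assume $f$ is a proximity morphism, so that \cref{l:proxmorproperties 4} applies and $f|_{\cons M}:\cons M\to\cons N$ is a boolean homomorphism. Given $a\prec b$, choose $c\in\cons M$ with $a\le c\le b$. Then $f(c)\in\cons N$ and $f(c)\le f(b)$ by monotonicity; moreover $\neg c\le\neg a$ gives $f(\neg c)\le f(\neg a)$, hence $\neg f(\neg a)\le\neg f(\neg c)=f(c)$ using that $f|_{\cons M}$ preserves complements. Thus $\neg f(\neg a)\le f(c)\le f(b)$ with $f(c)\in\cons N$, which is exactly $\neg f(\neg a)\prec f(b)$.

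For $(3)\Rightarrow(2)$, suppose $a_1\prec b_1$ and $a_2\prec b_2$. By (S5) on $M$ we get $\neg b_i\prec\neg a_i$, and applying $(3)$ to this relation (using $\neg\neg b_i=b_i$) gives $\neg f(b_i)\prec f(\neg a_i)$ for $i=1,2$. Since $\neg f(b_1)\wedge\neg f(b_2)\le\neg f(b_i)$, (S3) gives $\neg f(b_1)\wedge\neg f(b_2)\prec f(\neg a_i)$ for each $i$, and then (S4) gives $\neg f(b_1)\wedge\neg f(b_2)\prec f(\neg a_1)\wedge f(\neg a_2)$. Now \ref{def: proximitymor 2} yields $f(\neg a_1)\wedge f(\neg a_2)=f(\neg a_1\wedge\neg a_2)=f(\neg(a_1\vee a_2))$, while $\neg f(b_1)\wedge\neg f(b_2)=\neg(f(b_1)\vee f(b_2))$, so $\neg(f(b_1)\vee f(b_2))\prec f(\neg(a_1\vee a_2))$; applying (S5) on $N$ converts this into $\neg f(\neg(a_1\vee a_2))\prec f(b_1)\vee f(b_2)$. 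Finally $f(a_1\vee a_2)\wedge f(\neg(a_1\vee a_2))=f\big((a_1\vee a_2)\wedge\neg(a_1\vee a_2)\big)=f(0)=0$ by \ref{def: proximitymor 2}, so $f(a_1\vee a_2)\le\neg f(\neg(a_1\vee a_2))$, and (S3) delivers $f(a_1\vee a_2)\prec f(b_1)\vee f(b_2)$.

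For $(2)\Rightarrow(1)$, monotonicity already gives $\bigvee\{f(s)\mid s\in S\}\le f(\bigvee S)$ for every finite $S\subseteq\LC M$, so only the reverse inequality needs proof, which I would obtain by induction on $|S|$: the empty case is $f(0)=0$ and the singleton case is trivial, while for the inductive step one observes that any $x,y\in\cons M$ (in particular any $x,y\in\LC M$) satisfy $x\prec x$ and $y\prec y$, so $(2)$ with $a_i=b_i$ gives $f(x\vee y)\prec f(x)\vee f(y)$ and hence $f(x\vee y)\le f(x)\vee f(y)$ by (S2); taking $y=x_n\in\LC M$ and $x=x_1\vee\cdots\vee x_{n-1}\in\cons M$ completes the induction. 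I expect the only step requiring genuine insight to be $(3)\Rightarrow(2)$: one must realize that $(3)$ should first be dualized through (S5), that the two resulting relations should be merged via (S3) and (S4), and---the key point---that the passage from $\neg f(\neg a)$ back to $f(a)$ is supplied by $f(a)\wedge f(\neg a)=f(a\wedge\neg a)=f(0)=0$; everything else is routine manipulation with the proximity axioms and with $f(a\wedge b)=f(a)\wedge f(b)$.
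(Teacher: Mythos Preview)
Your proof is correct, but the route differs from the paper's. The paper only argues $(1)\Leftrightarrow(2)$ directly (its $(1)\Rightarrow(2)$ picks constructible witnesses $\bigvee S_1,\bigvee S_2$ and uses (P3) to split $f(\bigvee S_1\vee\bigvee S_2)$; its $(2)\Rightarrow(1)$ is the same binary argument you give) and then simply cites \cite[Lem.~2.2]{Bezh2012} and \cite[Prop.~7.4]{BezhJohn2014} for $(2)\Leftrightarrow(3)$. By contrast, you run the cycle $(1)\Rightarrow(3)\Rightarrow(2)\Rightarrow(1)$ and supply the de~Vries-style manipulations for $(3)\Rightarrow(2)$ explicitly, using only (S1)--(S5), \ref{def: proximitymor 2}, and the observation $f(a)\wedge f(\neg a)=f(0)=0$. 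Your $(1)\Rightarrow(3)$ leans on \cref{l:proxmorproperties 4} within the paper, and your $(2)\Rightarrow(1)$ spells out the induction that the paper leaves implicit. The upshot: your argument is self-contained (no external citations needed for $(2)\Leftrightarrow(3)$), while the paper's is shorter by outsourcing that equivalence; both are valid and the overlap on $(2)\Rightarrow(1)$ is essentially identical. One small note: you never actually invoke \ref{def: proximitymor 4}, but that is harmless---neither does the paper; it is listed as a standing hypothesis only so that (1) is literally ``$f$ is a proximity morphism.''
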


\begin{proof}
    It is sufficient to prove that (1)$\Leftrightarrow$(2) since (2)$\Leftrightarrow$(3) follows from \cite[Lem.~2.2]{Bezh2012} and \cite[Prop.~7.4]{BezhJohn2014}.

    (1)$\Rightarrow$(2):  
    Suppose $a_1 \prec b_1$ and $a_2 \prec b_2$. Then there exist finite $S_1,S_2\subseteq\LC M$ such that $a_1 \le \bigvee S_1 \le b_1$ and $a_2 \le \bigvee S_2 \le b_2$. Therefore, $a_1 \vee a_2 \le \bigvee S_1 \vee \bigvee S_2 \le b_1 \vee b_2$. By \ref{def: proximitymor 2}, $f$ is order preserving. Thus, by (1), 
    \[
    f(a_1\vee a_2) \le f\left(\bigvee S_1 \vee \bigvee S_2\right) = \bigvee f[S_1] \vee \bigvee f[S_2] \le f(b_1) \vee f(b_2).
    \]
    Consequently, $f(a_1 \vee a_2) \prec f(b_1) \vee f(b_2)$ since $\bigvee f[S_1] \vee \bigvee f[S_2]\in\cons N$ by \cref{l:proxmorproperties 3}.

    (2)$\Rightarrow$(1): Let $x,y\in\LC M$. Since $f$ is order preserving, $f(x)\vee f(y)\le f(x\vee y)$. For the reverse inequality,  $x\prec x$ and $y\prec y$. Therefore, by (2), $f(x\vee y) \prec f(x) \vee f(y)$, and hence $f(x\vee y)\le f(x)\vee f(y)$. 
    \end{proof}

We next show that the MT-algebras and proximity morphisms between them form a category, however neither the composition is usual function composition nor the identity morphisms are identity functions. The composition of proximity morphisms between MT-algebras is defined as for de Vries algebras:

\begin{definition}
    For proximity morphisms $f:M_1\to M_2$ and $g:M_2\to M_3$, define 
    \[
(g \star f)(a)=\bigvee \{g(f(x)) \mid x \in \LC M_1, \, x \leq a \}.
\]
\end{definition}

It is immediate from the above definition that if $x \in \LC M_1$ then $(g \star f)(x)=(g \circ f)(x)$.

\begin{lemma} \label{lem: composition}
    Let $f:M_1\to M_2$, $g:M_2\to M_3$, and $h:M_3\to M_4$ be proximity morphisms. For each $a\in M_1$, we have 
    \[
    ((h \star g) \star f)(a) = \bigvee \{h(g(f(x))) \mid x \in \LC M_1, \, x \leq a \} = (h \star (g \star f))(a).
    \]
    \end{lemma}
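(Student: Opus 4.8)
The plan is to unwind the definition of $\star$ twice on each side and reduce everything to a single supremum over locally closed elements below $a$. First I would expand the left-hand side: by definition,
\[
((h \star g) \star f)(a) = \bigvee \{ (h\star g)(f(x)) \mid x \in \LC M_1,\ x \le a \}.
\]
Here $f(x) \in \LC M_2$ by \cref{l:proxmorproperties 3}, and since $\star$ agrees with $\circ$ on locally closed elements (the remark just before this lemma), $(h\star g)(f(x)) = (h\circ g)(f(x)) = h(g(f(x)))$. This already gives the first equality. Symmetrically, for the right-hand side,
\[
(h \star (g \star f))(a) = \bigvee \{ h((g\star f)(x)) \mid x \in \LC M_1,\ x \le a \},
\]
and again $(g\star f)(x) = (g\circ f)(x) = g(f(x))$ for $x \in \LC M_1$, so each term is $h(g(f(x)))$, matching the middle expression. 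Thus both associated composites are literally the same supremum, and the three displayed quantities coincide term by term.

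The one point that deserves care — and which I expect to be the only real obstacle — is the justification that $\star$ restricts to ordinary composition on $\LC$: one must check that $(h\star g)(y) = h(g(y))$ for $y \in \LC M_2$, not merely that $(h\star g)(x) = (h\circ g)(x)$ was asserted for the original algebra. This follows because for $y \in \LC M_2$, the set $\{ z \in \LC M_2 \mid z \le y \}$ has $y$ itself as its greatest element, so $(h\star g)(y) = \bigvee\{ h(g(z)) \mid z \in \LC M_2,\ z \le y\}$ has $h(g(y))$ as an upper bound by monotonicity of $h\circ g$, and $h(g(y))$ occurs as one of the terms; hence the join equals $h(g(y))$. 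The same argument handles $g\star f$ on $\LC M_1$. Monotonicity of each proximity morphism is available from \ref{def: proximitymor 2} (as noted in the proof of \cref{lem: eqv morphisms}), so every step is legitimate.

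Assembling these observations, the proof is a short computation: expand the outer $\star$ on each side, use \cref{l:proxmorproperties 3} to keep the intermediate images locally closed, collapse the inner $\star$ to $\circ$ on locally closed arguments, and observe that both sides become $\bigvee\{ h(g(f(x))) \mid x \in \LC M_1,\ x \le a\}$. No infinitary distributivity or proximity-specific lemma beyond \cref{l:proxmorproperties 3} is needed, which is why I anticipate no serious difficulty beyond the bookkeeping described above.
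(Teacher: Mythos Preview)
Your proposal is correct and follows essentially the same route as the paper's proof: expand the outer $\star$, use \cref{l:proxmorproperties 3} to ensure $f(x)\in\LC M_2$, and collapse $\star$ to $\circ$ on locally closed arguments on both sides. Your extra paragraph justifying $(h\star g)(y)=h(g(y))$ for $y\in\LC M_2$ is more explicit than the paper, which simply cites the observation (stated just before the lemma) that $\star$ and $\circ$ agree on locally closed inputs.
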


\begin{proof}
Let $a \in M_1$. Then
\begin{align*}
    ((h \star g) \star f)(a) 
    &= \bigvee \{(h\star g)(f(x)) \mid x \in \LC M_1 , \, x \leq a \}\\
    &= \bigvee \{(h \circ g)(f(x)) \mid x \in \LC M_1, \, x \leq a \} & &\text{since $f(x) \in \LC M_2$}\\
    &= \bigvee \{h((g \circ f)(x)) \mid x \in \LC M_1, \, x \leq a \}\\
    &= \bigvee \{h((g \star f)(x)) \mid x \in \LC M_1, \, x \leq a \}&&\text{since $x\in\LC M_1$}\\
    &= (h \star (g \star f))(a). && \qedhere
\end{align*}
\end{proof}

\begin{definition}
    For an MT-algebra $M$, define $1_M:M\to M$ by
    \[
    1_M(a) = \bigvee \{x \in \LC M \mid x \leq a\} \ 
    \mbox{ for each }a\in M. \]
\end{definition}

\begin{lemma}\ \label{lem: identity}
    \begin{enumerate}[ref=\thelemma(\arabic*)]
        \item $1_M$  is a proximity morphism for each MT-algebra $M$. \label{1M}
        \item For each proximity morphism $f:M\to N$ between MT-algebras we have 
    \[
    1_N \star f = f = f \star 1_M.
    \]
    \end{enumerate}
\end{lemma}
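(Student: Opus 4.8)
The plan is to verify the two claims of the lemma in turn, using the definition of $\star$ and the characterization of proximity morphisms. For \textbf{(1)}, I must show $1_M$ satisfies \ref{def: proximitymor 1}--\ref{def: proximitymor 4}. For \ref{def: proximitymor 1}: if $u\in\sfO M$ then the set $\{x\in\LC M\mid x\le u\}$ contains $u$ itself (since $\sfO M\subseteq\LC M$), so $1_M(u)=u$; hence $1_M|_{\sfO M}=\id$, which is trivially a frame morphism. For \ref{def: proximitymor 3}: if $S\subseteq\LC M$ is finite, then again each $s\in S$ lies in $\LC M$ with $s\le s$, and more importantly, for any $s\in S$, $s\le\bigvee S$, so $1_M(\bigvee S)\ge\bigvee\{1_M(s)\mid s\in S\}=\bigvee S$; conversely every $x\in\LC M$ with $x\le\bigvee S$ satisfies $x\le\bigvee S$, giving $1_M(\bigvee S)\le\bigvee S$, so equality. (In particular $1_M(x)=x$ for every $x\in\LC M$, which I will record and use repeatedly.) For \ref{def: proximitymor 4}: by the displayed observation, $1_M(a)=\bigvee\{x\in\LC M\mid x\le a\}=\bigvee\{1_M(x)\mid x\in\LC M,\ x\le a\}$, which is exactly \ref{def: proximitymor 4}. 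The only condition requiring a genuine argument is \ref{def: proximitymor 2}: I must show $1_M(a\wedge b)=1_M(a)\wedge 1_M(b)$. The inequality $\le$ is immediate from monotonicity of $1_M$ (which follows from its definition). For $\ge$: using that $M$ is a complete boolean algebra, $1_M(a)\wedge 1_M(b)=\bigl(\bigvee\{x\in\LC M\mid x\le a\}\bigr)\wedge\bigl(\bigvee\{y\in\LC M\mid y\le b\}\bigr)=\bigvee\{x\wedge y\mid x,y\in\LC M,\ x\le a,\ y\le b\}$ by infinite distributivity; since $\LC M$ is closed under finite meets and $x\wedge y\le a\wedge b$, each such $x\wedge y$ is a term in the join defining $1_M(a\wedge b)$, giving the reverse inequality.

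For \textbf{(2)}, let $f:M\to N$ be a proximity morphism. First, $f\star 1_M$: by definition $(f\star 1_M)(a)=\bigvee\{f(1_M(x))\mid x\in\LC M,\ x\le a\}=\bigvee\{f(x)\mid x\in\LC M,\ x\le a\}$, using $1_M(x)=x$ for $x\in\LC M$ from part (1); this equals $f(a)$ by \ref{def: proximitymor 4}. Next, $1_N\star f$: by definition $(1_N\star f)(a)=\bigvee\{1_N(f(x))\mid x\in\LC M,\ x\le a\}$. By \cref{l:proxmorproperties 3}, $f(x)\in\LC N$ whenever $x\in\LC M$, so $1_N(f(x))=f(x)$ (again by part (1)); hence $(1_N\star f)(a)=\bigvee\{f(x)\mid x\in\LC M,\ x\le a\}=f(a)$ by \ref{def: proximitymor 4}.

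The main obstacle is the proof of \ref{def: proximitymor 2} for $1_M$ in part (1): it is the one place where I genuinely need completeness of the boolean algebra together with the join-infinite distributive law, and the closure of $\LC M$ under finite meets. Everything else reduces to the single clean observation that $1_M$ fixes locally closed elements, which turns \ref{def: proximitymor 1}, \ref{def: proximitymor 3}, \ref{def: proximitymor 4}, and both identities in part (2) into direct rewrites of the definitions.
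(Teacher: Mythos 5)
Your proof is correct and takes essentially the same approach as the paper: both reduce everything to the observation that $1_M$ fixes every element of $\LC M$, and your part (2) is verbatim the paper's computation. The only difference is that the paper delegates \ref{def: proximitymor 2} to ``an argument similar to \cite[Lem.~4.8]{Bezh2010}'' (which is exactly your infinite-distributivity computation) and deduces \ref{def: proximitymor 3} via \cref{lem: eqv morphisms}, whereas you verify both directly and self-containedly -- a harmless, arguably cleaner, variation.
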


\begin{proof}
 (1) By the definition of $1_M$,  $1_M(x) = x$ for each $x \in \LC M$. In particular, $1_M$ is identity on $\O M$, and hence \ref{def: proximitymor 1} holds. In view of \cref{rem: prox equiv}, an argument similar to \cite[Lem.~4.8]{Bezh2010} yields that \ref{def: proximitymor 2} and \cref{lem: eqv morphisms 3} hold. It is also immediate from the definition that \ref{def: proximitymor 4} holds. Thus, $1_M$ is a proximity morphism by \cref{lem: eqv morphisms}.
    
 (2)   Let $a\in M$. Then
    \begin{align*}
    (1_N \star f)(a) &= \bigvee \{1_N(f(x))\mid x\in \LC M, \, x\leq a\}\\
    &= \bigvee \{ f(x) \mid x\in \LC M, \, x\leq a\} & &\text{since $f(x) \in \LC N$}\\
    &= f(a) \\
    &= \bigvee \{f(1_M(x)) \mid x\in \LC M, \, x\leq a\} & &\text{since $x \in \LC M$}\\
    &=(f \star 1_M)(a). && \qedhere
    \end{align*} 
\end{proof}

As an immediate consequence of \cref{lem: composition,lem: identity} we obtain:

\begin{theorem} \label{composition is proximity mt-morphism}
The MT-algebras and proximity morphisms between them form a category $\MTP$ where composition is given by $\star$ and identity morphisms are $1_M$.
\end{theorem}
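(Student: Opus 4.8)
The plan is to verify the category axioms for $\MTP$: that $\star$ takes a composable pair of proximity morphisms to a proximity morphism, that $\star$ is associative, and that the maps $1_M$ act as two-sided identities for $\star$. Associativity is precisely \cref{lem: composition}. The identity laws $1_N \star f = f = f \star 1_M$ are the second clause of \cref{lem: identity}, and its first clause guarantees that each $1_M$ is itself a proximity morphism. Hence the only statement not yet in hand is that $g \star f \colon M_1 \to M_3$ is again a proximity morphism whenever $f \colon M_1 \to M_2$ and $g \colon M_2 \to M_3$ are; I would record this as a lemma just before the theorem (or fold its short proof into the proof of the theorem itself).

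To see that $h := g \star f$ is a proximity morphism, I would check \ref{def: proximitymor 1}--\ref{def: proximitymor 4} for $h$. Condition \ref{def: proximitymor 4} is immediate from the definition of $\star$ together with the observation (made right after that definition) that $h(x) = (g \circ f)(x)$ for $x \in \LC M_1$: the join $\bigvee\{h(x) \mid x \in \LC M_1,\ x \le a\}$ is then exactly the defining join of $h(a)$. Condition \ref{def: proximitymor 1} holds because on $\O M_1 \subseteq \LC M_1$ the map $h$ coincides with the genuine composite $g|_{\O M_2} \circ f|_{\O M_1}$, which lands in $\O M_3$ and is a composite of frame morphisms by \ref{def: proximitymor 1} for $f$ and $g$. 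For \ref{def: proximitymor 2} I would run the familiar de Vries-style computation: since $M_3$ is a frame, $h(a) \wedge h(b)$ equals the join of the elements $g(f(x)) \wedge g(f(y))$ over $x, y \in \LC M_1$ with $x \le a$ and $y \le b$; by \ref{def: proximitymor 2} for $g$ and then for $f$, each such element equals $g(f(x \wedge y))$, with $x \wedge y \in \LC M_1$ (local closedness is closed under finite meets) and $x \wedge y \le a \wedge b$, so $h(a) \wedge h(b) \le h(a \wedge b)$; the reverse inequality is trivial, and monotonicity of $h$ then follows from \ref{def: proximitymor 2}.

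The one step I expect to require a little care is \ref{def: proximitymor 3} for $h$. Given a finite $S \subseteq \LC M_1$, the inequality $\bigvee\{h(s) \mid s \in S\} \le h(\bigvee S)$ is just monotonicity. For the converse, fix $x \in \LC M_1$ with $x \le \bigvee S$; the frame law in $M_1$ gives $x = \bigvee\{x \wedge s \mid s \in S\}$, a finite join from $\LC M_1$, so \ref{def: proximitymor 3} for $f$ yields $f(x) = \bigvee\{f(x \wedge s) \mid s \in S\}$, which is a finite join from $\LC M_2$ by \cref{lem: lc to lc}; applying \ref{def: proximitymor 3} for $g$ then gives $g(f(x)) = \bigvee\{g(f(x \wedge s)) \mid s \in S\} \le \bigvee\{g(f(s)) \mid s \in S\}$, and taking the join over all eligible $x$ shows $h(\bigvee S) \le \bigvee\{h(s) \mid s \in S\}$. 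This completes the verification that $h$ is a proximity morphism, and, combined with \cref{lem: composition,lem: identity}, establishes the theorem. The only genuine obstacle is this last point — condition \ref{def: proximitymor 3} for the composite — which is exactly where one must combine infinite distributivity in the domain MT-algebra with the fact, \cref{lem: lc to lc}, that proximity morphisms preserve local closedness; everything else reduces directly to the lemmas already proved.
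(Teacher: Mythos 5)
Your proof is correct and follows essentially the same route as the paper: associativity and the unit laws are delegated to \cref{lem: composition,lem: identity}, and the only remaining point --- that $g\star f$ is again a proximity morphism --- is settled by verifying (P1)--(P4) for the composite, just as in the paper's proof. The single local difference is in (P3), where the paper derives the nontrivial inequality from $(g\star f)(a)\le (g\circ f)(a)$ together with (P3) for $f$ and then for $g$ applied to the finite set $f[S]\subseteq\LC M_2$; your elementwise argument via $x=\bigvee\{x\wedge s\mid s\in S\}$ for $x\le\bigvee S$ is an equally valid, only superficially different, way to obtain the same bound.
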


\begin{proof}
In view of \cref{lem: composition,lem: identity}, we only need to check that if $f:M_1\to M_2$ and ${g:M_2\to M_3}$ are proximity morphisms, then so is $g \star f : M_1 \to M_3$. For this it is sufficient to verify (P1)--(P4).

(P1) For $u \in \O M_1$, we have $(g \star f)(u)= (g \circ f)(u)$. Thus, $(g \star f) | _{\O M_1}$ is a frame morphism.
    
(P2) For $a, b \in M_1$, since $\LC M_1$ is closed under finite meets, we have
\begin{align*}
(g \star f)(a) \wedge (g \star f)(b) &= \bigvee \{g(f(x)) \mid x \in \LC M_1, \, x \leq a \} \wedge \bigvee \{g(f(y)) \mid y \in \LC M_1, \, y \leq b \}\\
&= \bigvee \{g(f(x)) \wedge g(f(y)) \mid x , y \in \LC M_1, \, x \leq a, \, y \leq b \} \\
&= \bigvee \{g(f(x \wedge y)) \mid x,y \in \LC M_1, \, x \leq a, \, y \leq b \} \\
&= \bigvee \{g(f(z)) \mid z \in \LC M_1, \, z \leq a\wedge b \}\\
&= (g \star f)(a \wedge b). 
\end{align*}

(P3) Let $S\subseteq\LC M$. By (P2), $g \star f$ is order preserving. Thus, $\bigvee \{ (g \star f)(s) \mid s \in S \} \le (g \star f)(\bigvee S)$. For the reverse inequality, since $(g \star f)(a)\le (g\circ f)(a)$ for each $a \in M_1$ and $f,g$ are proximity morphisms, we obtain
\[
(g \star f)\left(\bigvee S\right) \le (g \circ f)\left(\bigvee S\right) = g\left(\bigvee f[S]\right) = \bigvee \{ g(f(s)) \mid s \in S \} = \bigvee \{ (g \star f)(s) \mid s \in S \}.
\]

(P4) For $a\in M_1$, we have 
\begin{align*}
(g\star f) (a) &=\bigvee \{g(f(x))\mid x\in \LC M_1, \, x\leq a\}\\
&=\bigvee \{ (g \star f)(x)\mid x\in \LC M_1, \, x\leq a\} &&\text{since $x\in\LC M_1$}. && \qedhere
\end{align*}
\end{proof}

Not surprisingly, isomorphims in $\MTP$ are not structure-preserving bijections: 

\begin{example}\label{r:isonotbijection}
 Let $\square$ be the identity on the two-element boolean algebra $2$. Then $(2,\square)$ is an MT-algebra. 
 Also, let
 $M=\{0,a,b,1\}$ be the four-element boolean algebra. Then $(M,\square)$ is an MT-algebra, where $\square : M\to M$ is defined by 
 \[
 \square a =
 \begin{cases}
     0 & \mbox{ if } a=0 \\
     1 & \mbox{ otherwise.}
 \end{cases}
 \]
Observe that $1_M = \Box$ and $1_2$ is the identity on $2$. Since $2 \subseteq M$, we may view $1_M$ as a proximity morphism $f : M \to 2$ and $1_2$ as a proximity morphism $g : 2\to M$. We then have $g \star f=1_M$ and $f \star g = 1_2$. Thus, $g$ is the inverse of $f$ in $\MTP$, and hence $f$ is an isomorphism in $\MTP$. However, $f$ is clearly not a structure-preserving bijection.
\end{example}

In \cref{prop: proximity isos}, we will characterize isomorphisms in $\MTP$, from which we derive that isomorphisms between $T_{D}$-algebras are indeed structure-preserving bijections (observe that $M$ in the above example is not a $T_D$-algebra). This requires more machinery, which we turn to next.

\section{Funayama envelope}\label{sec: Funayama}

In this section, we define the Funayama envelope of a frame $L$ to be the MacNeille completion of its boolean envelope. We show that the resulting assignment $L\mapsto \mathcal{F} L$ maps each frame to a $T_D$-algebra whose opens are isomorphic to $L$, and thus think of $\mathcal{F} L$ as the $T_D$-hull of $L$. We prove that this assignment extends to a categorical equivalence between $\Frm$, $\MTP$, and the full subcategory $\TDMTP$ of $\MTP$ consisting of $T_D$-algebras. The equivalence of the last two categories is explained by the fact that isomorphisms in $\MTP$ are not structure-preserving bijections. We show that this unusual phenomenon disappears in $\TDMTP$.

\subsection{Boolean envelope of a frame}

Recalling the categories $\IA$ and $\IAC$ (\cref{def: IA and Int}), we have:

\begin{definition}\label{defn: essential algebras}\
\begin{enumerate}[label=\upshape(\arabic*), ref = \thedefinition(\arabic*)]
\item An interior algebra $A$ is {\em essential} if the least boolean subalgebra of $A$ generated by  $\sfO A$ coincides with $A$.\footnote{Esakia introduced essential algebras under the name of skeletal algebras (see \cite[Def.~2.5.6]{Esa2019}). Since the name ``skeletal" is overused in topology, we prefer the name essential. This is justified by the fact that we can think of $A$ as an essential extension of $\O A$ in that for each congruence $\Theta$ of the interior algebra $A$, if $\Theta$ is not identity then neither is $\Theta\cap(\O A \times \O A)$.}

\item Let $\SAC$ be the full subcategory of $\IAC$ consisting of essential algebras, and define $\SA$ similarly (as a full subcategory of $\IA$).
\end{enumerate}
\end{definition} 

Clearly $\SA$ is a wide subcategory of $\SAC$. These two categories are closely related to the following categories:

\begin{definition}\label{defn: Heyting albegra}\
    \begin{enumerate}[ref=\thedefinition(\arabic*)]
        \item Let $\HA$ be the category of Heyting algebras and Heyting homomorphisms, and let $\BA$ be the full subcategory of $\HA$ consisting of boolean algebras. \label{def: HA}
        \item Let $\DLat$ be the category of bounded distributive lattices and bounded lattice homomorphisms, and let $\HLat$ be the full subcategory of $\DLat$ consisting of Heyting algebras. \label{def: DLat}
    \end{enumerate}
\end{definition}
Clearly, $\HA$ is a wide subcategory of $\HLat$. To connect these two categories with $\SA$ and $\SAC$, we recall the definition of the boolean envelope of a distributive lattice (see, e.g., \cite[Sec.~V.4]{balbes74}), which is the reflector $\B : \DLat \to \BA$.  

The {\em boolean envelope} or {\em free boolean extension} of a bounded distributive lattice $L$ is a pair $(\mathcal{B}L, e)$, where $\mathcal{B}L$ is a boolean algebra and $e: L \to \mathcal{B}L$ is a bounded lattice embedding satisfying the following universal mapping property: for any boolean algebra $A$ and a bounded lattice homomorphism $h: L \to A$, there is a unique boolean homomorphism $\mathcal{B}h: \mathcal{B}L \to A$ such that $\mathcal{B}h\circ e = h$; i.e., the following diagram commutes:
\[\begin{tikzcd}
L \ar[r, "e"] \ar[d,"h"'] & \B L \ar[dl,dashed,"\B h"]\\
A
\end{tikzcd}\]

We identify $L$ with it image $e[L]$ and treat $L$ as a bounded sublattice of $\B L$ which generates $\B L$.
If $L$ is a Heyting algebra, then the embedding $L\hookrightarrow\B L$ has a right adjoint $\square : \B L\to L$ and $(\B L, \square)$ is an essential interior algebra (see, e.g., \cite[Sec.~2.5]{Esa2019}). Moreover, each bounded lattice homomorphism $h:L_1\to L_2$ lifts uniquely to a continuous morphism $\B h:\B L_1\to\B L_2$. Furthermore, $h$ is a Heyting homomorphism iff $\B h$ is a morphism of interior algebras (see, e.g., \cite[Sec.~2.2]{BMM08}). We thus obtain:

\begin{theorem}\ \label{thm: skeletal and Heyting}
    \begin{enumerate}[ref=\thetheorem(\arabic*)]
        \item $\SA$ is a coreflective subcategory of $\IA$ that is equivalent to $\HA$. \label[theorem]{Ess coreflective}
        \item $\SAC$ is a coreflective subcategory of $\IAC$ that is equivalent to $\HLat$. \label [theorem]{Essc coreflective}
    \end{enumerate}
\end{theorem}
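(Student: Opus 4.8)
The plan is to prove \cref{thm: skeletal and Heyting} by exhibiting the functors in both directions and checking they form the desired equivalence, then separately verifying coreflectivity. Since the open-element assignment and the boolean-envelope assignment are already almost entirely set up in the text preceding the theorem, the proof is mostly a matter of assembling these pieces and checking functoriality and the unit/counit conditions.

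For part (1): I would define $\O : \SA \to \HA$ by sending an essential algebra $(A,\square)$ to its Heyting algebra $\sfO A$ of open elements (recall from \cref{Interior algebras and MT-algebras} that $\sfO A$ is a Heyting algebra) and an interior-algebra morphism $f$ to its restriction $f|_{\sfO A}$, which is a Heyting homomorphism because $f$ preserves $\square$; one checks this respects identities and composition. In the other direction, $\B : \HA \to \SA$ sends a Heyting algebra $L$ to $(\B L, \square)$ — which is essential and, by the displayed facts preceding the theorem, lands in $\SA$ since a Heyting homomorphism lifts to a morphism of interior algebras — and sends a Heyting homomorphism $h$ to $\B h$. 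The unit $L \to \O \B L$ is the composite of $e : L \hookrightarrow \B L$ with corestriction to $\sfO \B L$; it is an isomorphism because the right adjoint $\square$ to $L \hookrightarrow \B L$ exhibits $e[L]$ as precisely the open elements of $\B L$. The counit $\B \O A \to A$ is the unique continuous (in fact interior-algebra) morphism extending the inclusion $\sfO A \hookrightarrow A$ given by the universal property of $\B$; it is an isomorphism exactly because $A$ is essential, i.e. $\sfO A$ generates $A$ as a boolean algebra, so the extension is onto, and it is one-to-one because any nontrivial congruence would restrict nontrivially to the opens (the essential-extension property recorded in the footnote to \cref{defn: essential algebras}). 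Naturality of both transformations is routine from the universal property. This gives the equivalence $\SA \simeq \HA$.

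For coreflectivity of $\SA$ in $\IA$: given an arbitrary interior algebra $(A,\square)$, its open elements $\sfO A$ form a Heyting algebra, and I would take the coreflection to be $(\B \sfO A, \square)$ together with the counit-like comparison morphism $\B \sfO A \to A$ extending $\sfO A \hookrightarrow A$ (again by the universal property of $\B$, this is a continuous morphism, and it is a morphism of interior algebras since the inclusion $\sfO A \hookrightarrow A$ is a Heyting homomorphism onto a sublattice and $\square$ is computed the same way). The universal property of the coreflection — that any interior-algebra morphism from an essential algebra $E$ into $A$ factors uniquely through $\B \sfO A \to A$ — follows by restricting such a morphism to opens, lifting via $\B$, and using that $E$ is generated by its opens to get uniqueness. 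Part (2) is the verbatim analogue with ``Heyting homomorphism'' replaced by ``bounded lattice homomorphism'', ``interior-algebra morphism'' by ``continuous morphism'', $\HA$ by $\HLat$, $\SA$ by $\SAC$, and $\IA$ by $\IAC$; the only change is that $\B h$ for a mere bounded lattice homomorphism $h$ is only a continuous morphism, not an interior-algebra morphism, which is precisely why we land in $\IAC$ rather than $\IA$.

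The main obstacle, and the one place deserving genuine care, is verifying that the counit $\B \sfO A \to A$ is an isomorphism for essential $A$ in part (1) (and the analogous coreflection statement): surjectivity is immediate from essentiality, but injectivity requires the ``essential extension'' observation in the footnote — that the kernel of any interior-algebra quotient of $A$ is detected on $\sfO A \times \sfO A$ — or, equivalently, a direct argument that the lattice embedding $\sfO A \hookrightarrow A$ together with the fact that $\sfO A$ generates $A$ forces $\B \sfO A \to A$ to be injective. Everything else — functoriality, naturality, the identifications of $\O \B L$ with $L$ — is a straightforward unwinding of the universal property of $\B$ and the adjunction $\square \dashv {\hookrightarrow}$, and I would cite the indicated places in \cite{Esa2019} and \cite{BMM08} rather than reprove them.
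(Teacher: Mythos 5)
Your proposal is correct and follows essentially the same route as the paper, whose proof of \cref{thm: skeletal and Heyting} consists of citing \cite[Thm.~2.5.11]{Esa2019} for (1) and \cite[Thm.~2.14]{BMM08} for (2); what you write out is precisely the standard unit/counit argument given in those sources, with the functors $\O$ and $\B$ and the counit $\B\sfO A\to A$ extending the inclusion of the opens. The one caveat is that your congruence-based route to injectivity of that counit is mildly circular (to apply the essential-extension property from the footnote to \cref{defn: essential algebras} one must first know the counit is an interior-algebra morphism, and verifying $\square\varepsilon(x)\le\varepsilon(\square x)$ already uses injectivity), so the direct argument you also offer --- that a bounded-lattice embedding of a distributive lattice into a boolean algebra it generates induces an isomorphism from the free boolean extension, as in \cite[Sec.~V.4]{balbes74} --- is the one to rely on.
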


\begin{proof}
    For (1) see \cite[Thm.~2.5.11]{Esa2019}, and (2) is proved similarly (see, e.g., \cite[Thm.~2.14]{BMM08}).
\end{proof}

We next restrict the equivalence in \cref{Essc coreflective} to constructible algebras.

\begin{definition}\label{constructible algebras}\
   \begin{enumerate}
 \item We call $A$ {\em constructible} if 
it is essential and 
$\O A$ is a frame. \label[definition]{constructible algebras 2} 
\item A continuous morphism $f:A\to B$ between constructible algebras  is a \emph{constructible morphism} if $f|_{\O A}: \O A \to \O B$ is a frame morphism.
\item Let $\Cons$ be the category of constructible  algebras and constructible morphisms. 
\end{enumerate} 
\end{definition}

Note that $\Cons$ is a non-full subcatgeory of $\SAC$ since not every bounded lattice homomorphism between frames is a frame morphism. However, isomorphisms in $\Cons$ are isomorphisms in $\SAC$. 
We have the following consequence of \cref{Essc coreflective}:

\begin{theorem}\label{thm: Frm = Cons}
    $\Frm$ is equivalent to $\Cons$.
\end{theorem}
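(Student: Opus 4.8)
The strategy is to build a functor $\Frm \to \Cons$ sending a frame $L$ to its boolean envelope $(\B L, \square)$ with the induced interior operator, and a functor $\Cons \to \Frm$ sending a constructible algebra $A$ to $\O A$, and then check they are quasi-inverse. The raw material is already assembled: by \cref{Essc coreflective} we have an equivalence $\SAC \simeq \HLat$ realized by $A \mapsto \O A$ and $L \mapsto (\B L, \square)$, together with the fact (recalled just before \cref{thm: skeletal and Heyting}) that a bounded lattice homomorphism $h$ lifts to $\B h$ on boolean envelopes. So the work is essentially to cut this equivalence down to the (non-full!) subcategories on both sides, using that $\Cons$ is carved out of $\SAC$ by the frame-morphism condition on opens and that $\Frm$ is carved out of $\HLat$ by the same condition.

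First I would define $\F_0 : \Frm \to \Cons$ on objects by $\F_0 L = (\B L, \square)$, where $\square : \B L \to \B L$ is the composite of the right adjoint $\B L \to L$ with the inclusion $L \hookrightarrow \B L$; this is an essential interior algebra with $\O(\B L) \cong L$ (a frame), so it is a constructible algebra in the sense of \cref{constructible algebras 2}. On morphisms, send a frame morphism $f : L \to M$ to $\B f : \B L \to \B M$. Two things need checking here: that $\B f$ is a continuous morphism (immediate, since $f$ is in particular a bounded lattice homomorphism and lifts to a continuous morphism by the cited fact), and that $(\B f)|_{\O(\B L)} : \O(\B L) \to \O(\B M)$ is a frame morphism — but under the identification $\O(\B L) \cong L$ this restriction is just $f$ itself, so it is a frame morphism by hypothesis. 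Hence $\B f$ is a constructible morphism. Functoriality (preservation of identities and composition) is inherited from functoriality of $\B : \DLat \to \BA$.

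In the other direction, define $\O : \Cons \to \Frm$ on objects by $A \mapsto \O A$ (a frame by definition of constructible algebra) and on morphisms by $f \mapsto f|_{\O A}$, which is a frame morphism precisely by clause (2) of \cref{constructible algebras}; functoriality is clear since restriction respects identities and composition of continuous morphisms (composition in $\Cons$ is ordinary function composition, as it is a subcategory of $\IAC$). It then remains to exhibit natural isomorphisms $\O \circ \F_0 \cong \id_{\Frm}$ and $\F_0 \circ \O \cong \id_{\Cons}$. For the first, the counit $\varepsilon_L : L \to \O(\B L)$ (the corestriction of $L \hookrightarrow \B L$) is a frame isomorphism, and naturality is the commuting square defining $\B f$. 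For the second, essentiality of $A$ means $A$ is generated as a boolean algebra by $\O A$, so the universal property of $\B(\O A)$ yields a surjective boolean homomorphism $\B(\O A) \to A$ which is injective because $A$ is generated by $\O A$ with no further relations beyond the lattice structure on $\O A$ — more precisely, this is exactly the content of the equivalence $\SAC \simeq \HLat$ restricted to a single object, so the unit of that equivalence $A \to \B(\O A)$ is an isomorphism of interior algebras, hence an isomorphism in $\Cons$ (as noted in the paragraph before the statement, isomorphisms in $\Cons$ are isomorphisms in $\SAC$). Naturality again follows from the universal property of $\B$.

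\textbf{Main obstacle.} The only genuinely delicate point is that $\Cons$ and $\Frm$ are \emph{non-full} subcategories of $\SAC$ and $\HLat$ respectively, so one cannot simply invoke "restrict an equivalence to full subcategories." One must verify that $\F_0$ and $\O$ actually \emph{land} in the correct morphism classes (the two checks flagged above: $\B f$ restricts to $f$ on opens, and $f|_{\O A}$ is a frame morphism by hypothesis), and that the natural isomorphism components $\varepsilon_L$ and $A \to \B(\O A)$, which come from the bigger equivalence, are isomorphisms \emph{in the smaller categories} — which they are, since iso in $\Cons$ coincides with iso in $\SAC$ and likewise an invertible Heyting/bounded-lattice map with invertible inverse between frames is a frame isomorphism. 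Once these bookkeeping points are dispatched, the equivalence is just a restriction of \cref{Essc coreflective}.
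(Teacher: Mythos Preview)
Your proposal is correct and takes essentially the same approach as the paper: both restrict the equivalence of \cref{Essc coreflective} by checking that $L$ is a frame iff $(\B L,\square)$ is constructible, that $h$ is a frame morphism iff $\B h$ is a constructible morphism (since $\B h|_{\O\B L}=h$), and that the unit/counit isomorphisms from the ambient equivalence remain isomorphisms in the smaller categories because isos in $\Cons$ coincide with isos in $\SAC$. The paper's version is just a more compressed statement of exactly the bookkeeping you spell out.
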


\begin{proof}
    For a Heyting algebra $L$, we have that $L$ is a frame iff $(\B L,\square)$ is a constructible algebra. Indeed, if $L$ is a frame, then $(\B L,\square)$ is an essential interior algebra for which $\O\B L$ is a frame since $\O\B L = L$ (recall that we identify $L$ with $e[L]$). Thus, $(\B L,\square)$ is a constructible algebra. For the same reason, if $(\B L,\square)$ is constructible then $L$ must be a frame. 
    Moreover, for a bounded lattice homomorphism $h:L_1 \to L_2$ between frames, since $\O\B L_i = L_i$ ($i=1,2$) and $\B h|_{L_1} = h$, we have that $h$ is a frame morphism iff $\B h$ is a constructible morphism. Thus, since isomorphisms in $\Cons$ are isomorphisms in $\SAC$, the equivalence of \cref{Essc coreflective}
    restricts to an equivalence between $\Frm$ and $\Cons$. 
\end{proof}

\subsection{Funayama envelope of a frame}

\begin{proposition}
    $\O : \MTP \to \Frm$ is a functor.
\end{proposition}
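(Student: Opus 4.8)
The plan is to leverage \cref{thm: O functor}: on objects we keep the assignment $M \mapsto \sfO M$, which already produces a frame since $\sfO M$ is a subframe of the complete boolean algebra $M$. Only the behaviour on morphisms and the preservation of the (non-standard) identities and composition of $\MTP$ require checking.

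On a proximity morphism $f : M \to N$ I would set $\O f := f|_{\sfO M}$. That this is a legitimate arrow of $\Frm$ is immediate from \ref{def: proximitymor 1}, which asserts precisely that $f|_{\sfO M} : \sfO M \to \sfO N$ is a frame morphism. For preservation of identities, recall that $1_M(a) = \bigvee\{x \in \LC M \mid x \le a\}$ and that $\sfO M \subseteq \LC M$ (every open $u$ equals $\square u \wedge \diamond 1$); hence $1_M(u) = u$ for all $u \in \sfO M$, so $\O(1_M) = \id_{\sfO M}$.

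For preservation of composition, let $f : M_1 \to M_2$ and $g : M_2 \to M_3$ be proximity morphisms. Since every $u \in \sfO M_1$ lies in $\LC M_1$, the observation recorded right after the definition of $\star$ gives $(g \star f)(u) = (g \circ f)(u) = g(f(u))$. By \ref{def: proximitymor 1}, $f(u) = \O(f)(u)$ belongs to $\sfO M_2$, whence $g(f(u)) = g|_{\sfO M_2}(f(u)) = \big(\O(g) \circ \O(f)\big)(u)$. Therefore $\O(g \star f) = (g \star f)|_{\sfO M_1} = \O(g) \circ \O(f)$, and $\O$ is a functor.

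I do not anticipate a real obstacle: the argument is essentially bookkeeping, and the only delicate point is that $\MTP$ is composed via $\star$ rather than function composition — but on locally closed elements, and in particular on open ones, $\star$ collapses to ordinary composition, which disposes of that concern.
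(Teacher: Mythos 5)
Your proof is correct and follows essentially the same route as the paper's: use \ref{def: proximitymor 1} for well-definedness on morphisms, and the inclusion $\sfO M \subseteq \LC M$ to see that $1_M$ restricts to the identity on opens and that $\star$ collapses to ordinary composition on open elements. No gaps.
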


\begin{proof}
    By \cref{thm: O functor}, $\O M$ is a frame; by (P1), if $f:M\to N$ is a proximity morphism, $\O f:\O M\to\O N$ is a frame morphism. Since $\O M\subseteq\LC M$, the restriction of $1_M$ is the identity on $\O M$, so $\O(1_M) = 1_{\O M}$. For the same reason, if $f:M_1 \to M_2$ and $g:M_2 \to M_3$ are proximity morphisms then 
    \[
    \O(g \star f)=(g \star f )|_{\O M_1}= (g \circ f)|_{\O M_1 }=g|_{\O M_2 } \circ f|_{\O M_1 }=\O g \circ \O f.
    \]
    Thus, $\O : \MTP \to \Frm$ is a functor. 
    \end{proof}

We next show that $\O$ is an equivalence by describing its quasi-inverse using Funayama's result \cite{funayama59} that there is a frame embedding of each frame $L$ into a complete boolean algebra $B$. There are two standard constructions of $B$, which can be realized as the booleanization of the frame of nuclei of $L$ \cite{Joh1982} or as the MacNeille completion of the boolean envelope of $L$ \cite{Gra2011}. As was shown in \cite{BGJ2013}, the two realizations yield the same object up to isomorphism. 

For a frame $L$, let $\overline{\mathcal{B}L}$ be the MacNeille completion of its boolean envelope. We lift the interior operator $\square:\mathcal{B}L\to \mathcal{B}L$ to $\overline{\square}:\overline{\mathcal{B}L}\to \overline{\mathcal{B}L}$ by
\[
\overline{\square} a = \bigvee\{\square b\mid b\in \mathcal{B}L \mbox{ and } b\leq a\}. 
\] 
Then $\left(\overline{\mathcal{B}L},\overline{\square}\right)$ is an MT-algebra such that $\O \mathcal{F} L \cong L$ (see e.g., \cite[p.~8]{BezhanishviliRR2023}).  

\begin{definition}
    For a frame $L$, we call the MT-algebra $\left(\overline{\mathcal{B}L},\overline{\square}\right)$ the {\em Funayama envelope} of $L$ and denote it by $\mathcal{F} L$.
\end{definition}

Our aim is to show that the Funayama envelope extends to a functor $\mathcal F : \Frm \to \MTP$ that is a quasi-inverse of $\O$.

\begin{lemma}\label{frame to MT morphism}
    Each frame morphism $h:L_1 \to L_2$ extends to a proximity morphism $\mathcal{F}h:\mathcal{F}L_1 \to \mathcal{F}L_2$ given by 
    \begin{eqnarray*}
    \mathcal{F}h(a) &=& \bigvee \{\mathcal{B}h(x) \mid x \in \LC\mathcal{F}L_1, \, x \leq a\} \\
    &=& \bigvee \{\mathcal{B}h(b) \mid b \in \mathcal{B}L_1, \, b \leq a\}.
    \end{eqnarray*}
\end{lemma}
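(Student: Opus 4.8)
The plan is to verify the two displayed formulas agree, then check (P1)--(P4) for the map $\mathcal{F}h$. First I would observe that the second formula follows from the first: since $\LC\mathcal{F}L_1 \supseteq \mathcal{B}L_1 \cap \mathcal{F}L_1$, actually more carefully, every $x\in\LC\mathcal{F}L_1$ is a meet of an open and a closed element of $\mathcal{F}L_1$, hence lies in the boolean subalgebra generated by $\O\mathcal{F}L_1 = L_1$, which is exactly (the image of) $\mathcal{B}L_1$; and conversely every $b\in\mathcal{B}L_1$ is a finite join of locally closed elements. Combined with the fact that $\mathcal{B}h$ is a bounded lattice homomorphism (so preserves the finite joins), the two suprema coincide. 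This also shows $\mathcal{F}h$ extends $\mathcal{B}h$, hence extends $h$ on $L_1$.

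Next I would check the axioms. For (P1): for $u\in\O\mathcal{F}L_1 = L_1$ we have $u\le u$ with $u\in\LC\mathcal{F}L_1$, and for any locally closed $x\le u$ we have $\mathcal{B}h(x)\le\mathcal{B}h(u) = h(u)\in L_2$, so $\mathcal{F}h(u) = h(u)$; thus $\mathcal{F}h|_{\O\mathcal{F}L_1} = h$ is a frame morphism. Axiom (P4) is immediate from the defining formula together with the first bullet of the proof of \cref{composition is proximity mt-morphism} style argument (locally closed elements below $a$ are cofinal in themselves). For (P2) I would mimic the computation in (P2) of \cref{composition is proximity mt-morphism}: using that $\LC\mathcal{F}L_1$ is closed under finite meets, that $\mathcal{B}h$ preserves finite meets, and the join-infinite distributive law in the complete boolean algebra $\mathcal{F}L_2$, one rewrites $\mathcal{F}h(a)\wedge\mathcal{F}h(b)$ as a join over pairs $x\le a$, $y\le b$, then reindexes by $z = x\wedge y\le a\wedge b$. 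Finally, for (P3) I would invoke \cref{lem: eqv morphisms}: it suffices to verify condition (2) or (3) there, i.e. that $a\prec b$ implies $\neg\mathcal{F}h(\neg a)\prec\mathcal{F}h(b)$; since $a\prec b$ means $a\le b_0\le b$ for some $b_0\in\mathcal{B}L_1$, and $\mathcal{B}h$ is a boolean homomorphism on $\mathcal{B}L_1$, one gets $\neg\mathcal{F}h(\neg a)\le\mathcal{B}h(b_0)\le\mathcal{F}h(b)$ with $\mathcal{B}h(b_0)\in\cons\mathcal{F}L_2$, giving the cons-below relation. (Here one uses $\mathcal{F}h(\neg a)\ge\mathcal{B}h(\neg b_0) = \neg\mathcal{B}h(b_0)$, whence $\neg\mathcal{F}h(\neg a)\le\mathcal{B}h(b_0)$.)

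The main obstacle I expect is not any single axiom but rather making sure the interaction between the MacNeille completion and the locally closed elements is handled correctly: one must confirm that $\LC\mathcal{F}L_1$, the locally closed elements of the \emph{completion}, is still generated (under finite joins/meets) by $L_1 = \O\mathcal{F}L_1$ so that it coincides with the image of $\mathcal{B}L_1$ inside $\mathcal{F}L_1$ — this is what lets $\mathcal{B}h$ control $\mathcal{F}h$ on all of $\LC\mathcal{F}L_1$. Once that identification $\cons\mathcal{F}L_1 = \mathcal{B}L_1$ is in hand, everything reduces to the already-established facts that $\mathcal{B}h$ is a continuous (indeed, here boolean) morphism and that $\mathcal{F}L_i$ are MT-algebras, and the verification of (P1)--(P4) runs parallel to the proof of \cref{composition is proximity mt-morphism}. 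A secondary point to be careful about is well-definedness: the joins defining $\mathcal{F}h(a)$ are taken in the complete boolean algebra $\mathcal{F}L_2$, so they always exist; no extra argument is needed there.
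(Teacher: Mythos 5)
Your proposal is correct and follows essentially the same route as the paper: identify the constructible elements of $\mathcal{F}L_1$ with (the image of) $\mathcal{B}L_1$, observe that $\mathcal{F}h$ restricts to $\mathcal{B}h$ on $\mathcal{B}L_1$ (hence to $h$ on $L_1$), and verify (P1)--(P4), with your direct distributivity computation for (P2) being exactly what the paper outsources to a citation. The only divergence is (P3), where the paper argues directly --- for finite $S \subseteq \LC\mathcal{F}L_1$ one has $\bigvee S \in \mathcal{B}L_1$, so $\mathcal{F}h(\bigvee S) = \mathcal{B}h(\bigvee S) = \bigvee \mathcal{B}h[S] = \bigvee \mathcal{F}h[S]$ since $\mathcal{B}h$ is a boolean homomorphism --- which is shorter than your (also valid) detour through \cref{lem: eqv morphisms 3}.
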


\begin{proof}
    By identifying $L_1$ with its image in $\mathcal B L_1$, we have $\O\mathcal{F}L_1 = L_1$, and similarly for $L_2$. Since $\LC\mathcal{F}L_1 \subseteq \mathcal{B}L_1$ and each element of $\mathcal{B}L_1$ is a finite join from $\LC\mathcal{F}L_1$, we have
    \[
    \bigvee \{\mathcal{B}h(b) \mid b \in \mathcal{B}L_1, \, b \leq a\} = \bigvee \{\mathcal{B}h(x) \mid x \in \LC\mathcal{F}L_1, \, x \leq a\}.
    \]
    We show that $\mathcal{F}h$ satisfies (P1)--(P4) of \cref{def: proximitymor}.
     Clearly $\mathcal{F}h| _{\mathcal{B}L_1} = \mathcal{B}h$. In particular, $\mathcal{F}h| _{L_1} =h$, and so (P1) holds. 
     By \cite[Lem.~4.8]{Bezh2010}, $\mathcal{F}h(a \wedge b)=\mathcal{F}h(a) \wedge \mathcal{F}h(b)$, and hence (P2) holds. Since $\LC \mathcal{F}L_1 \subseteq \mathcal{B}L_1$ and $\B h$ is a boolean homomorphism, for each finite $S \subseteq \LC \mathcal{F}L_1$, we get \[\mathcal{F}h\left(\bigvee S\right)=\mathcal{B}h\left(\bigvee S\right)=\bigvee\mathcal{B}h[S] =\bigvee\mathcal{F}h[S],\] and thus (P3) holds. Finally, 
     \[
     \mathcal{F}h(a) = \bigvee \{\mathcal{B}h(x) \mid x \in \LC\mathcal{F}L_1, \, x \leq a\} = \bigvee \{\mathcal{F}h(x) \mid x \in \LC\mathcal{F}L_1, \, x \leq a\},
     \]
     and so (P4) holds, yielding that $\mathcal{F}h$ is a proximity morphism.
\end{proof}

 \begin{proposition}
   $\mathcal{F}: \Frm \to \MTP$ is a functor.
    \end{proposition}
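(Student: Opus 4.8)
The plan is to verify the only two things still missing for functoriality: that $\mathcal F$ sends each identity frame morphism to the identity proximity morphism $1_{\mathcal F L}$ of \cref{composition is proximity mt-morphism}, and that it respects composition with respect to the $\star$-operation. The object part and the morphism part are already in hand, since \cref{frame to MT morphism} produces, for each frame morphism $h$, a genuine proximity morphism $\mathcal F h$. The one external ingredient is that the boolean envelope is itself functorial: $\mathcal B\colon\DLat\to\BA$ is the reflector, so $\mathcal B(1_L)=1_{\mathcal B L}$ and $\mathcal B(h\circ g)=\mathcal B h\circ\mathcal B g$ whenever $g,h$ are composable bounded lattice homomorphisms, and frame morphisms are in particular such.

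First I would treat identities. Given a frame $L$, functoriality of $\mathcal B$ gives $\mathcal B(1_L)=\id$ on $\mathcal B L$, so the second description of $\mathcal F h$ in \cref{frame to MT morphism}, applied to $h=1_L$, yields for every $a\in\mathcal F L$
\[
\mathcal F(1_L)(a)=\bigvee\{b\mid b\in\mathcal B L,\ b\le a\}=\bigvee\{x\mid x\in\LC\mathcal F L,\ x\le a\}=1_{\mathcal F L}(a),
\]
where the middle equality holds because $\mathcal B L=\cons\mathcal F L$ and every constructible element is a finite join of locally closed ones. Hence $\mathcal F(1_L)=1_{\mathcal F L}$.

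Next I would handle composition. Let $g\colon L_1\to L_2$ and $h\colon L_2\to L_3$ be frame morphisms and fix $a\in\mathcal F L_1$. For $x\in\LC\mathcal F L_1$ we have $x\in\mathcal B L_1$, so $\mathcal F g(x)=\mathcal B g(x)\in\mathcal B L_2$ (indeed $\mathcal F g(x)\in\LC\mathcal F L_2$ by \cref{lem: lc to lc}); applying $\mathcal F h$ and using $\mathcal F h|_{\mathcal B L_2}=\mathcal B h$ together with functoriality of $\mathcal B$ gives
\[
\mathcal F h\bigl(\mathcal F g(x)\bigr)=\mathcal B h\bigl(\mathcal B g(x)\bigr)=\mathcal B(h\circ g)(x).
\]
Substituting into the definition of $\star$,
\[
(\mathcal F h\star\mathcal F g)(a)=\bigvee\{\mathcal F h(\mathcal F g(x))\mid x\in\LC\mathcal F L_1,\ x\le a\}=\bigvee\{\mathcal B(h\circ g)(x)\mid x\in\LC\mathcal F L_1,\ x\le a\}=\mathcal F(h\circ g)(a).
\]
Thus $\mathcal F(h\circ g)=\mathcal F h\star\mathcal F g$, and together with \cref{frame to MT morphism} this shows that $\mathcal F\colon\Frm\to\MTP$ is a functor.

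As for the main difficulty: there is essentially none beyond bookkeeping. The genuinely technical content --- checking \ref{def: proximitymor 1}--\ref{def: proximitymor 4} for $\mathcal F h$ --- was already carried out in \cref{frame to MT morphism}, and what remains reduces to the observation that on locally closed (equivalently, constructible) elements the $\star$-composite of $\mathcal F h$ and $\mathcal F g$ collapses to the ordinary composite $\mathcal B h\circ\mathcal B g=\mathcal B(h\circ g)$. The only point requiring a moment's attention is that $\mathcal F g$ carries $\mathcal B L_1$ into $\mathcal B L_2$, so that $\mathcal F h$ may be evaluated on the output and coincides there with $\mathcal B h$; this is exactly \cref{lem: lc to lc} (or simply the identity $\mathcal F g|_{\mathcal B L_1}=\mathcal B g$ noted in the proof of \cref{frame to MT morphism}).
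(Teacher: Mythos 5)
Your proof is correct and follows essentially the same route as the paper: both arguments reduce identities and $\star$-composition to the functoriality of $\mathcal{B}$ via the identity $\mathcal{F}h|_{\mathcal{B}L}=\mathcal{B}h$ established in \cref{frame to MT morphism}. The only (immaterial) difference is that you keep the join indexed over $\LC\mathcal{F}L_1$ in the composition computation, whereas the paper switches to indexing over $\mathcal{B}L_1$.
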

 \begin{proof}
     As we saw above, $\mathcal F$ is well defined both on objects and morphisms of $\Frm$. We show that $\mathcal F$ sends identity morphisms to identity moprhisms and preserves composition. Let $a\in \mathcal{F} L$. Since $\B(1_L)=1_{\B L}$,  we obtain 
         \[\mathcal{F}(1_{L})(a) = \bigvee \{\mathcal{B}(1_L)(x) \mid x \in \LC L_), \, x \leq a\} = \bigvee \{ x \in \LC L_1 \mid x \leq a\} = 1_{\mathcal{F} L}(a).\] Therefore, $\mathcal{F}(1_{L})=1_{\mathcal{F} L}$. Now, let $f:L_1\to L_2$ and $g:L_2\to L_3$ be frame morphisms. Then 
     \begin{align*}
     (\mathcal{F}g \star \mathcal{F}f)(a) 
          &= \bigvee \{\mathcal{F}g( \mathcal{F}f(x)) \mid x \in \LC \mathcal{F}L_1, \, x \leq a \}\\
     &= \bigvee \{\mathcal{F}g( \mathcal{F}f(b)) \mid b \in \mathcal{B}L_1, \, b \leq a \}\\
     &= \bigvee \{\mathcal{F}g(\mathcal{B}f(b)) \mid b \in \mathcal{B}L_1, \, b \leq a \} && (\F f| _{\B L_1}=\B f)\\
     &= \bigvee \{\mathcal{B}g(\mathcal{B}f(b)) \mid b \in \mathcal{B}L_1, \, b \leq a \} && (\F g| _{\B L_2}=\B g; \, b \in \B L_1 \Rightarrow \B f(b) \in \B L_2)\\
     &= \bigvee \{\mathcal{B}(g \circ f)(b) \mid b \in \mathcal{B}L_1, \, b \leq a \}&& (\B g \circ \B f = \B(g \circ f))\\
     &= \mathcal{F}(g \circ f)(a). &&
         \qedhere
     \end{align*}
 \end{proof}

Each frame $L$ is isomorphic to $\O\F L$. We denote the isomorphism by $\rho_L : L \to \O \mathcal{F} L$. By identifying $L$ with $\O \F L$, we view $\rho_L$ as the identity on $L$. In addition, if $M$ is an MT-algebra, then the boolean envelope $\B\O M$ of $\O M$ embeds into $M$ by \cite[Prop.2.5.9]{Esa2019}, and hence so does $\F\O M$ by \cite[Thm.XII.3.4]{balbes74}. We thus identify $\F\O M$ with its image in $M$. 

\begin{lemma}\label{adjoint equivalence}
For $M \in \MTP$, define $\zeta_M:\mathcal{F}\O M \to M$ by 
   \[
   \zeta_M(a)=\bigvee_M \{ x \in \LC M \mid x \leq a \}
   \]
   and $\varphi_M : M \to \mathcal{F}\O M$ by 
   \[
   \varphi_M(b)=\bigvee_{\mathcal{F}\O M}\{x \in \LC M \mid x \leq b\}.
   \] 
   Then $\zeta_M$ and $\varphi_M$ are mutually inverse proximity isomorphisms.
\end{lemma}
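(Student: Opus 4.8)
The plan is to verify directly that $\zeta_M$ and $\varphi_M$ are proximity morphisms and that they compose (in the $\star$ sense) to the respective identities. The key observation underpinning everything is that $\LC M = \LC \mathcal F \O M$ as subsets of $M$: indeed, $\mathcal F\O M$ is identified with the sub-MT-algebra of $M$ whose opens are exactly $\O M$, and locally closed elements are determined by the interior and closure operators, which agree on $\mathcal F \O M$ and $M$ because both have the same frame of opens $\O M$ (closure in the booleanization, and then MacNeille completion, of $\O M$ is computed the same way as in $M$, since $\B\O M$ sits inside $M$ as the boolean subalgebra generated by $\O M$, i.e.\ $\cons M$). Once this identification of locally closed elements is in place, the formulas for $\zeta_M$ and $\varphi_M$ are literally the restriction-of-$1$ type formulas we have already analyzed.

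First I would check that $\zeta_M$ is a proximity morphism. For \ref{def: proximitymor 1}: on $\O \mathcal F\O M = \O M$, the value $\zeta_M(u) = \bigvee\{x \in \LC M \mid x \le u\}$; since $u$ itself is open hence locally closed and is the largest locally closed element below $u$, we get $\zeta_M(u) = u$, so $\zeta_M|_{\O \mathcal F\O M}$ is the inclusion $\O M \hookrightarrow \O M$, a frame morphism. For \ref{def: proximitymor 3} and \ref{def: proximitymor 4}: since $\LC M = \LC\mathcal F\O M$, for $x \in \LC\mathcal F\O M$ we have $\zeta_M(x) = x$, so $\zeta_M$ agrees with $1_M$-style behavior on locally closed elements and (P3), (P4) follow exactly as in the proof of \cref{lem: identity}(1), using \cref{rem: prox equiv} and the argument of \cite[Lem.~4.8]{Bezh2010} for (P2). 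The verification for $\varphi_M$ is symmetric, with the roles of $M$ and $\mathcal F\O M$ swapped; here one uses that for $x \in \LC M = \LC\mathcal F\O M$, $\varphi_M(x) = x$ (the join being computed in $\mathcal F\O M$, but $x$ already lies in $\mathcal F\O M$), together with the fact that every element of $\mathcal F\O M$ is a join of locally closed elements below it (this is where $\mathcal F\O M$ being a $T_D$-algebra, as established for Funayama envelopes, enters).

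Then I would compute the composites. For $a \in M$,
\[
(\zeta_M \star \varphi_M)(a) = \bigvee\{\zeta_M(\varphi_M(x)) \mid x \in \LC M,\ x \le a\} = \bigvee\{\zeta_M(x) \mid x \in \LC M,\ x \le a\} = \bigvee\{x \in \LC M \mid x \le a\} = 1_M(a),
\]
using $\varphi_M(x) = x$ and $\zeta_M(x) = x$ for $x$ locally closed. Symmetrically, for $a \in \mathcal F\O M$,
\[
(\varphi_M \star \zeta_M)(a) = \bigvee\{\varphi_M(\zeta_M(x)) \mid x \in \LC\mathcal F\O M,\ x \le a\} = \bigvee\{x \mid x \in \LC\mathcal F\O M,\ x \le a\} = 1_{\mathcal F\O M}(a),
\]
using that $\LC\mathcal F\O M = \LC M$ and that $\mathcal F\O M$ is a $T_D$-algebra so this last join recovers $a$ (equivalently, $1_{\mathcal F\O M}$ is the identity morphism of $\mathcal F\O M$ in $\MTP$). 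Since $1_M$ and $1_{\mathcal F\O M}$ are the identity morphisms of $\MTP$ by \cref{composition is proximity mt-morphism}, this shows $\zeta_M$ and $\varphi_M$ are mutually inverse, hence proximity isomorphisms.

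The main obstacle I anticipate is the identification $\LC M = \LC \mathcal F\O M$ and the attendant care about in which algebra joins are taken. One must be sure that (i) a locally closed element of $M$ actually lies in $\mathcal F\O M$ — this holds because $\LC M \subseteq \cons M = \B\O M \subseteq \mathcal F\O M$ — and (ii) that $\diamond$ computed in $\mathcal F\O M$ agrees with $\diamond$ computed in $M$ on the relevant elements, so that "locally closed" is an unambiguous notion; this follows from the standard fact (cited in the excerpt, \cite[Prop.~2.5.9]{Esa2019} and the MacNeille embedding) that $\mathcal F\O M \hookrightarrow M$ is a sub-MT-algebra with the same opens. Modulo this bookkeeping, all four proximity axioms and both composite identities reduce to the already-established behavior of the canonical maps $1_M$, so no genuinely new computation is required.
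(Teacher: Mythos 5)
Your proposal is correct and follows essentially the same route as the paper: verify (P1)--(P4) for each map by noting that both are the identity on $\LC M$ (invoking \cite[Lem.~4.8]{Bezh2010} for (P2)), then compute the $\star$-composites to recover $1_M$ and $1_{\mathcal{F}\O M}$. The extra bookkeeping you supply --- the identification $\LC M = \LC \mathcal{F}\O M$ inside $M$ via $\LC M \subseteq \cons M = \B\O M \subseteq \mathcal{F}\O M$ --- is correct and is left implicit in the paper's proof.
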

\begin{proof}
Since each element of $\cons M$ is a finite join from $\LC M$, we have 
$$\zeta_M(a) = \bigvee_M \{ x \in \LC M \mid x \leq a \} =\bigvee_M \{ b \in \cons M \mid b \leq a \}.$$ 
Thus, it satisfies \ref{def: proximitymor 4}. Since $\zeta_M$ is identity on both $\LC M$ and $\O M$, it also satisfies \ref{def: proximitymor 3} and \ref{def: proximitymor 1}. Finally, it satisfies \ref{def: proximitymor 2} by \cite[Lem.~4.8]{Bezh2010}. Therefore, $\zeta_M$ is a proximity morphism. That $\varphi_M$ is a proximity morphism is proved similarly. It is left to show that $\zeta_M$ and $\varphi_M$ are mutually inverse in $\MTP$. Since ${\zeta_M(x) = \varphi_M(x) = x}$ for each $x\in \LC M$, for $a \in M$, we have 
        \begin{eqnarray*}
        (\zeta_M \star \varphi_M) (a) &=& \bigvee_M \{\zeta_M(\varphi_M(x)) \mid x \in \LC M , \, x \leq a \} \\
        &=& \bigvee_M \{x \in \LC M \mid x \leq a\} = 1_{M}(a);
        \end{eqnarray*}
    and for $b \in \mathcal{F}\O M$, we have 
    \begin{eqnarray*}
    (\varphi_M \star \zeta_M) (b) &=& \bigvee_{\mathcal{F}\O M} \{\varphi_M(\zeta_M(x)) \mid x \in \LC M, \, x \leq b \} \\
    &=& \bigvee_{\mathcal{F}\O M} \{x \in \LC M \mid x \leq b\} = 1_{\mathcal{F}\O M}(b), 
    \end{eqnarray*}
    concluding the proof.
\end{proof}

\begin{remark}
    In general, the $\MTP$-isomorphisms produced in the above result are not bijections. For example, consider the MT-algebra $M = \{0,a,b,1\}$ of \cref{r:isonotbijection}. We have $\mathcal{F}\O M=\{0,1\}$, and so $(\zeta_M\star\varphi_M)(a)=0\neq a$. This behavior occurs as composition in $\MTP$  is given by $\star$ rather than by usual composition of functions, and identities in $\MTP$ are not identity maps.
\end{remark}

\begin{lemma}\label{natural transformation}
$ $
\begin{enumerate}
    \item $\rho: 1_{\Frm} \to \O  \mathcal{F}$ is a natural transformation.
    \item $\zeta : \mathcal{F}  \O \to 1_{\MTP}$ is a natural transformation.
\end{enumerate}
    
\end{lemma}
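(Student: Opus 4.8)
The plan is to verify the two naturality squares directly, using the explicit formulas for $\rho$, $\zeta$, $\mathcal{F}$ on morphisms, and $\star$-composition, together with the identification conventions set up just before the statement ($\rho_L$ is the identity once $L$ is identified with $\O\mathcal{F}L$, and $\mathcal{F}\O M$ is identified with its image in $M$).

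For part (1): I must show that for a frame morphism $h:L_1\to L_2$ the square relating $h$, $\mathcal{F}h$, $\rho_{L_1}$, $\rho_{L_2}$ commutes in $\Frm$, i.e. $\O(\mathcal{F}h)\circ\rho_{L_1}=\rho_{L_2}\circ h$. Since $\rho_{L_i}$ is (under the identification $\O\mathcal{F}L_i=L_i$) the identity, this reduces to checking $\O(\mathcal{F}h)=h$ as a map $L_1\to L_2$. But $\O(\mathcal{F}h)=(\mathcal{F}h)|_{\O\mathcal{F}L_1}$, and \cref{frame to MT morphism} already records that $\mathcal{F}h|_{\mathcal{B}L_1}=\mathcal{B}h$, hence $\mathcal{F}h|_{L_1}=h$ (as $L_1=\O\mathcal{F}L_1\subseteq\mathcal{B}L_1$ and $\mathcal{B}h$ extends $h$). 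So part (1) is essentially immediate from the construction in \cref{frame to MT morphism}; I only need to spell out that "$\O$ applied to a proximity morphism is its restriction to opens" and that $\rho$ is the identity under the chosen identification. Strictly speaking I should note naturality is with respect to all frame morphisms, which is exactly the domain of $\mathcal{F}$.

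For part (2): I must show that for a proximity morphism $f:M\to N$ the square $\zeta_N\star\mathcal{F}(\O f)=f\star\zeta_M$ holds in $\MTP$. Here I compute both composites using the formula $(g\star h)(a)=\bigvee\{g(h(x))\mid x\in\LC(\cdot),\,x\le a\}$. On the left, $\mathcal{F}(\O f)$ restricted to $\mathcal{B}\O M$ equals $\mathcal{B}(\O f)$, and $\zeta_N$ is the identity on $\LC N$; for $x\in\LC(\mathcal{F}\O M)$ we have $x\in\mathcal{B}\O M$, and one checks $\mathcal{B}(\O f)(x)\in\LC N$ so $\zeta_N(\mathcal{F}(\O f)(x))=\mathcal{B}(\O f)(x)$, while for $x\in\LC M$ with $x\le a$ we have $x\in\LC(\mathcal{F}\O M)=\LC M$ (these coincide under the identification, since $\LC$ is generated by $\O$ which is shared) — so $(\zeta_N\star\mathcal{F}(\O f))(a)=\bigvee_N\{\mathcal{B}(\O f)(x)\mid x\in\LC M,\,x\le a\}$. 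On the right, $\zeta_M$ is the identity on $\LC M$, so $(f\star\zeta_M)(a)=\bigvee_N\{f(\zeta_M(x))\mid x\in\LC M,\,x\le a\}=\bigvee_N\{f(x)\mid x\in\LC M,\,x\le a\}$. Since by \cref{lem: lc to lc} $f(x)\in\LC N$ for $x\in\LC M$, and $f(x)=\mathcal{B}(\O f)(x)$ because both are the common boolean-homomorphic extension of the frame morphism $\O f=f|_{\O M}$ to $\cons M=\mathcal{B}\O M$ (using \cref{l:proxmorproperties 4} that $f|_{\cons M}$ is a boolean homomorphism agreeing with $f|_{\O M}$ on $\O M$, and the universal property of $\mathcal{B}$), the two expressions agree.

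The main obstacle is the bookkeeping around the identifications: one must be careful that $\LC(\mathcal{F}\O M)$, $\cons(\mathcal{F}\O M)=\mathcal{B}\O M$, and the corresponding subsets of $M$ match up under the embedding $\mathcal{F}\O M\hookrightarrow M$, and that "$\mathcal{B}(\O f)$" and "$f|_{\cons M}$" really are the same boolean homomorphism. Both follow from the universal property of the boolean envelope (a bounded-lattice homomorphism between frames has a unique boolean-homomorphic extension to the boolean envelopes) applied to $\O f$, combined with \cref{l:proxmorproperties} and the fact that $\O f$ determines $f$ on $\cons M$. Once this identification is made explicit, both naturality squares collapse to the single identity $f|_{\cons M}=\mathcal{B}(\O f)$ (for part (2)) and $\mathcal{F}h|_{L_1}=h$ (for part (1)), and the rest is routine expansion of $\star$ and $\bigvee$.
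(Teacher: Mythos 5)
Your proposal is correct and follows essentially the same route as the paper: part (1) reduces to $\mathcal{F}h|_{L_1}=h$ from the construction of $\mathcal{F}h$, and part (2) expands both $\star$-composites over $\LC M$ and uses that $\zeta$ is the identity on locally closed elements. The only difference is that you explicitly justify the identification $\mathcal{F}(\O f)(x)=\mathcal{B}(\O f)(x)=f(x)$ for $x\in\LC M$ via \cref{l:proxmorproperties 4} and the universal property of the boolean envelope, a step the paper's proof leaves implicit when it writes $\mathcal{F}\O g(x)=g(x)$.
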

\begin{proof}
(1) Let $f:L_1\to L_2$ be a frame morphism. We must show that the following diagram commutes.
\[\begin{tikzcd}[column sep=5em]
    L_1 \ar[r, "f"] \ar[d, "\rho_{L_1}" ']& L_2 \ar[d, "\rho_{L_2}"]\\
    \mathcal{O}\F L_1 \ar[r, "\mathcal{O}\mathcal{F}f"']  &  \mathcal{O}\F L_2 
    \end{tikzcd}\]
As before, we identify $L$ with $\rho_{L}[L]$ and assume that $L\subseteq \mathcal{F} L$. Since the functor $\O$ sends a proximity morphism to its restriction to the frame of opens, commutativity of the diagram amounts to showing that $\mathcal{F}f(a)=f(a)$ for each $a\in L_1$, which follows from the definition of $\mathcal{F}f$. 

(2) Let $g:M_1 \to M_2$ be a proximity morphism between MT-algebras. We must show that the following diagram commutes. 
\[\begin{tikzcd}[column sep=5em]
    M_1 \ar[r, "g"] & M_2 \\
    \mathcal{F}\O M_1 \ar[r, "\mathcal{F}\mathcal{O}g"'] \ar[u, "\zeta_{M_1}"] &  \mathcal{F}\O M_2 \ar[u, "\zeta_{M_2}"']
    \end{tikzcd}\]
First let $x \in \LC M_1$. Then $g(x) \in \LC M_2$ by \cref{lem: lc to lc}. Therefore, $\mathcal{F}\O g(x)=g(x)$, and hence 
     \begin{align*}
        \zeta_{M_2} (\mathcal{F}\O g(x)) = \zeta_{M_2}(g(x)) 
        = \bigvee \{y \in \LC M_2 \mid y \leq g(x) \} 
               = g(x) = g(\zeta_{M_1}(x)),
    \end{align*}
    where the last equality holds since $\zeta_{M_1}(x)=x$. Now let  $a \in \mathcal{F}\O M_1$. Then 
    \begin{align*}
        (\zeta_{M_2} \star \mathcal{F}\O g)(a) &= \bigvee \{\zeta_{M_2}(\mathcal{F}\O g(x)) \mid x \in \LC M_1, \, x \leq a \}\\
        &= \bigvee \{g(\zeta_{M_1}(x)) \mid x \in \LC M_1, \, x \leq a \} = (g \star \zeta_{M_1})(a). \qedhere
    \end{align*}
\end{proof}

\begin{theorem}\label{adjunction of Frm and MT}
The functors $\O$ and $\F$ 
establish an equivalence of $\MTP$ and $\Frm$.
\end{theorem}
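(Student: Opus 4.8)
The plan is simply to assemble the pieces already in place into the standard criterion for an equivalence of categories. Recall that functors $F\colon\mathcal C\to\mathcal D$ and $G\colon\mathcal D\to\mathcal C$ form an equivalence as soon as there exist natural isomorphisms $GF\cong 1_{\mathcal C}$ and $FG\cong 1_{\mathcal D}$; I would apply this with $F=\F\colon\Frm\to\MTP$ and $G=\O\colon\MTP\to\Frm$, both of which have already been shown to be functors.

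First I would invoke \cref{natural transformation} to obtain the natural transformations $\rho\colon 1_{\Frm}\to\O\F$ and $\zeta\colon\F\O\to 1_{\MTP}$, so that it remains only to check that each is componentwise invertible. For $\rho$ this is immediate: $\rho_L\colon L\to\O\F L$ is the chosen isomorphism witnessing $\O\F L\cong L$ (under the identification $L=\O\F L$ it is literally the identity on $L$), so $\rho$ is a natural isomorphism. For $\zeta$ this is exactly the content of \cref{adjoint equivalence}: for every MT-algebra $M$ the map $\varphi_M$ is a two-sided inverse of $\zeta_M$ in $\MTP$, hence $\zeta$ is a natural isomorphism as well. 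Combining these gives $\O\F\cong 1_{\Frm}$ and $\F\O\cong 1_{\MTP}$, which is precisely the assertion that $\O$ and $\F$ constitute an equivalence of categories, with $\F$ a quasi-inverse of $\O$.

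There is no genuine obstacle here: the substantive work --- lifting each frame morphism to a proximity morphism (\cref{frame to MT morphism}), checking that $\F$ respects the $\star$-composition, and verifying that $\zeta_M$ and $\varphi_M$ are mutually inverse (\cref{adjoint equivalence}) --- has already been carried out, and the only point that requires care is the bookkeeping with the identifications $L=\O\F L$ and $\F\O M\subseteq M$, which is what makes the comparison maps act as identities on the generating locally closed elements. If desired, the equivalence can be promoted to an adjoint equivalence in the usual way (replacing one of $\rho,\zeta$ by its inverse if necessary and verifying the two triangle identities, which on locally closed elements reduce to $\zeta_M(x)=\varphi_M(x)=x$ together with the fact that the relevant $\star$-composites agree with ordinary composition on $\LC$), but this refinement is not needed for the statement as given.
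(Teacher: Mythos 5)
Your proof is correct and follows essentially the same route as the paper: both rest on \cref{natural transformation} for naturality of $\rho$ and $\zeta$ and on \cref{adjoint equivalence} (plus the definition of $\rho_L$) for their componentwise invertibility. The only difference is that the paper goes on to verify the triangle identities so as to exhibit $\rho$ and $\zeta$ as the unit and counit of an adjoint equivalence $\F\dashv\O$, a refinement you correctly note is not required for the statement as given.
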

\begin{proof}
As we saw in \cref{adjoint equivalence}, the natural transformations $\rho$ and $\zeta$ of \cref{natural transformation} are isomorphisms on all components. Thus, it suffices to show that these are the unit and counit of the adjunction $\mathcal{F}\dashv \O$. 

Let $M \in \MTP$. In view of our identifications, $\O \zeta_{M}$ and $\rho_{\O M}$ are identities. Hence, for $u \in \O M$, we have 
    \[
    \O \zeta_{M} \circ \rho_{\O M}(u)=\O \zeta_M(u)=u.
    \]

Let $L \in \Frm$. For similar reasons, $\rho_{L}$ and $\mathcal{B}\rho_{L}$ are identities. Therefore, for $b \in \mathcal{B}L$,
    \[(\zeta_{\mathcal{F} L} \circ \mathcal{F}\rho_{L})(b)=\zeta_{\mathcal{F} L} (\mathcal{B}\rho_{L}(b))=\zeta_{\mathcal{F} L}(b)=b.\]
    Thus, for $a \in \mathcal{F} L$,
    \[
    (\zeta_{\mathcal{F} L} \circ \mathcal{F}\rho_{L})(a)=\bigvee \{
    \zeta_{\mathcal{F} L}(\mathcal{F}\rho_{L}(b)) \mid  b \in \mathcal{B}L, \, b \leq a \}=\bigvee \{b \in \mathcal{B}L \mid b \leq a \}=a. \qedhere
    \]
\end{proof}

In Example \ref{r:isonotbijection} we have seen that $\MTP$-isomorphisms are not necessarily structure-preserving bijections. The fact that $\mathcal{O}:\MTP\to \Frm$ establishes an equivalence of categories now gives us a characterization of such morphisms (see, e.g., \cite[Prop.~7.47]{AHS2006}).

\begin{proposition}\label{l:isochar}
    Let $f:M\to N$ be a proximity morphism of MT-algebras.
    \begin{enumerate} [ref=\thelemma(\arabic*)]
        \item $f$ is an isomorphism iff $\mathcal{O}f$ is an isomorphism of frames.\label [lemma]{iso}
        \item $f$ is a monomorphism iff $\mathcal{O}f$ is a monomorphism of frames.\label [lemma]{mono}
        \item $f$ is an epimorphism iff $\mathcal{O}f$ is an epimorphism of frames.\label [lemma]{epi}
    \end{enumerate} 
\end{proposition}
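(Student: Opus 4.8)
The plan is to deduce all three statements from the equivalence of categories $\O : \MTP \to \Frm$ established in \cref{adjunction of Frm and MT}, using the general categorical fact that an equivalence of categories reflects and preserves isomorphisms, monomorphisms, and epimorphisms (see \cite[Prop.~7.47]{AHS2006} or standard references). The key point is that $\O$ is one half of an adjoint equivalence, with quasi-inverse $\F$, unit $\rho$, and counit $\zeta$, all of whose components are isomorphisms in the respective categories.

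First I would record the ``easy'' direction common to all three: since $\O$ is a functor, it preserves isomorphisms, and it sends monomorphisms to monomorphisms and epimorphisms to epimorphisms whenever it is faithful — but more cleanly, since $\O$ is part of an equivalence it is full and faithful, hence preserves and reflects all three classes of morphisms at once. Concretely, for (1): if $f$ is an isomorphism then $\O f$ is clearly an isomorphism since $\O$ is a functor; conversely, if $\O f$ is an isomorphism, then because $\O$ is full and faithful one can lift $(\O f)^{-1}$ to a proximity morphism $g : N \to M$ with $\O g = (\O f)^{-1}$, and then $\O(g \star f) = \O g \circ \O f = 1_{\O M} = \O(1_M)$, so faithfulness of $\O$ gives $g \star f = 1_M$, and symmetrically $f \star g = 1_N$; hence $f$ is an isomorphism in $\MTP$. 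For (2) and (3), I would invoke that a full and faithful functor reflects monos and epis, and that any functor that has a left (resp.\ right) adjoint — in particular any equivalence — preserves monos (resp.\ epis); since $\O$ is an equivalence it is simultaneously a left and a right adjoint, so it preserves and reflects both. Alternatively one can argue directly: $f$ is mono iff for all $g, h$ with $f \star g = f \star h$ we have $g = h$; applying $\O$ and using that $\O$ is faithful on one side and full on the other translates this condition back and forth across the equivalence.

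I do not anticipate a genuine obstacle here, since everything follows formally from \cref{adjunction of Frm and MT}; the only mild subtlety is bookkeeping around the fact that composition in $\MTP$ is $\star$ rather than ordinary function composition and that identities are the maps $1_M$ rather than set-theoretic identities, so when I write out ``$\O(g \star f) = \O g \circ \O f$'' I should cite the computation already performed in the proof that $\O : \MTP \to \Frm$ is a functor, and when I invoke faithfulness of $\O$ I should note this is part of its being an equivalence. The cleanest writeup simply states that $\O$ is an equivalence of categories by \cref{adjunction of Frm and MT}, that equivalences are full and faithful and have both adjoints, and that such functors preserve and reflect isomorphisms, monomorphisms, and epimorphisms, with a pointer to \cite[Prop.~7.47]{AHS2006}; a one-line parenthetical spelling out the lift-and-apply-faithfulness argument for (1) suffices to make the paper self-contained.
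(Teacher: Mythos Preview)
Your proposal is correct and matches the paper's approach exactly: the paper gives no separate proof, merely noting before the statement that the result follows from $\O:\MTP\to\Frm$ being an equivalence of categories and citing \cite[Prop.~7.47]{AHS2006}. Your additional unpacking of why equivalences preserve and reflect isos, monos, and epis is a helpful elaboration of what the paper leaves implicit.
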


Note that, apart from isomorphisms not being bijections between the underlying sets, in $\MTP$ we also have monomorphisms that are not injective and epimorphisms that are not surjective:  
\begin{example}
    In Example \ref{r:isonotbijection}, the maps $f$ and $g$ are both isomorphisms hence both are monic and epic. However, $f$ is not injective and $g$ is not surjective.
\end{example}

This counterintuitive behavior disappears when we restrict our attention to $T_D$-algebras. 

\begin{proposition} {\em \cite[Thm.~6.5]{BezhanishviliRR2023}} \label{lem: fom is td}  
    An MT-algebra $M$ is $T_D$ iff $M \cong \mathcal{F}\O M$.
\end{proposition}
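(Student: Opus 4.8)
The statement asserts an isomorphism in $\MT$ (a structure-preserving bijection); note that $M\cong\F\O M$ in $\MTP$ always holds via the counit $\zeta_M$ of \cref{adjunction of Frm and MT}, so the content is that for $T_D$-algebras this $\MTP$-isomorphism can be upgraded to a genuine MT-isomorphism, and conversely. The plan is to realize $\F\O M$ concretely inside $M$. By the universal property of the boolean envelope together with \cite[Prop.~2.5.9]{Esa2019}, the embedding $\O M\hookrightarrow M$ extends to an embedding $\B\O M\hookrightarrow M$ whose image is the boolean subalgebra of $M$ generated by $\O M$, namely $\cons M$; so we identify $\B\O M$ with $\cons M\subseteq M$. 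Since the interior operator $\square$ on $\B\O M$ is the right adjoint of $\O M\hookrightarrow\B\O M$, under this identification $\square c$ is the largest open element of $M$ below $c$, i.e.\ $\square c=\square_M c$ for $c\in\cons M$. Hence $\F\O M=\overline{\B\O M}$ is the MacNeille completion of $\cons M$, with interior operator $\overline{\square}a=\bigvee\{\square_M c\mid c\in\cons M,\ c\le a\}$.

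For the forward implication, suppose $M$ is a $T_D$-algebra. Then $\LC M$ join-generates $M$, hence so does $\cons M$ (each constructible element is a finite join from $\LC M$); in particular every nonzero $a\in M$ dominates a nonzero element of $\cons M$, i.e.\ $\cons M$ is a dense subalgebra of the complete boolean algebra $M$. By the uniqueness of the MacNeille completion, there is a boolean isomorphism $\theta:\overline{\cons M}\to M$ that is the identity on $\cons M$; being an isomorphism of complete boolean algebras, $\theta$ preserves arbitrary joins, and $c\le a\Leftrightarrow c\le\theta(a)$ for $c\in\cons M$. It remains to check that $\theta$ intertwines $\overline{\square}$ with $\square_M$. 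For $a\in\overline{\cons M}$ we get $\theta(\overline{\square}a)=\bigvee_M\{\square_M c\mid c\in\cons M,\ c\le\theta(a)\}$; since $\O M\subseteq\cons M$ and $\square_M$ is monotone with $\square_M c\in\O M$ and $\square_M c\le c$, this join equals $\bigvee_M\{u\in\O M\mid u\le\theta(a)\}=\square_M(\theta(a))$. Thus $\theta$ is an MT-isomorphism, so $M\cong\F\O M$.

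For the converse, first observe that $\F L$ is a $T_D$-algebra for every frame $L$: indeed $\cons\F L=\B\O\F L=\B L$ (using $\O\F L\cong L$), and $\B L$ is dense in its MacNeille completion $\overline{\B L}=\F L$, so $\cons\F L$ — and hence $\LC\F L$ — join-generates $\F L$. Being a $T_D$-algebra is a property of the MT-algebra structure ($\LC M$ join-generates $M$), hence invariant under $\MT$-isomorphism; so if $M\cong\F\O M$, then $M$ is a $T_D$-algebra.

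The only external input used is the uniqueness of the MacNeille completion in the form that a complete boolean algebra containing a boolean subalgebra $B$ densely is isomorphic over $B$ to $\overline{B}$; everything else is bookkeeping. I expect the main point to get right is the identification $\B\O M\cong\cons M\subseteq M$ together with the verification that the lifted operator $\overline{\square}$ on $\F\O M$ agrees, under $\theta$, with the genuine interior operator $\square_M$ of $M$ — the rest of the argument is routine.
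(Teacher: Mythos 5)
Your proof is correct. Note that the paper itself gives no argument here but simply cites \cite[Thm.~6.5]{BezhanishviliRR2023}; your route --- identifying $\B\O M$ with $\cons M\subseteq M$, observing that $T_D$ is exactly join-density of $\cons M$ in the complete boolean algebra $M$, invoking uniqueness of the MacNeille completion to get a boolean isomorphism $\theta$ over $\cons M$, and then checking that $\theta$ intertwines $\overline{\square}$ with $\square_M$ (the one step that genuinely needs verification, which you carry out correctly) --- is the natural argument and is consistent with how the paper itself treats the embedding $\F\O M\hookrightarrow M$ after \cref{frame to MT morphism}.
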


\begin{definition}\label{TD-algebras}
   Let $\TDMTP$ be the full subcategory of $\MTP$ consisting of $T_D$-algebras. 
\end{definition}
 
We have the following:

\begin{theorem}\label{WMT_D&Frm}\
\begin{enumerate}
[ref=\thetheorem(\arabic*)]
    \item {\em $\TDMTP$} is equivalent to $ \MTP$.\label[theorem]{TdMTp equivalent to MTp}
    \item $\Frm$ is equivalent to $\TDMTP$.\label[theorem]{TdMTp equivalent to Frm}
\end{enumerate}
\end{theorem}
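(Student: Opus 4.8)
The plan is to deduce both parts from the equivalence $\O \dashv \F$ of \cref{adjunction of Frm and MT} together with the intrinsic characterization of $T_D$-algebras in \cref{lem: fom is td}. For \cref{TdMTp equivalent to Frm}, I would first observe that for any frame $L$ the Funayama envelope $\F L$ is a $T_D$-algebra: indeed $\LC \F L$ join-generates $\F L$ because $\F L = \overline{\B L}$ is the MacNeille completion of $\B L$, every element of $\B L$ is a finite join from $\LC \F L$ (as noted in the proof of \cref{frame to MT morphism}), and $\B L$ is join-dense in its MacNeille completion. Hence the functor $\F : \Frm \to \MTP$ corestricts to a functor $\F : \Frm \to \TDMTP$. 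Composing with the (full, faithful) inclusion $\TDMTP \hookrightarrow \MTP$ and using that $\O \circ \F \cong 1_{\Frm}$ and $\F \circ \O \cong 1_{\MTP}$ already hold, one gets that $\F : \Frm \to \TDMTP$ and $\O|_{\TDMTP} : \TDMTP \to \Frm$ are quasi-inverse: $\O \F \cong 1_{\Frm}$ is immediate, and $\F \O M \cong M$ for $M \in \TDMTP$ is exactly \cref{lem: fom is td} (and this isomorphism is $\MTP$-natural, being the restriction of the counit $\zeta$ of \cref{natural transformation}).

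For \cref{TdMTp equivalent to MTp}, I would argue that the inclusion $\iota : \TDMTP \hookrightarrow \MTP$ is an equivalence. It is full and faithful by definition of ``full subcategory'', so it remains to check it is essentially surjective; but for arbitrary $M \in \MTP$ we have the $\MTP$-isomorphism $\zeta_M : \F \O M \to M$ of \cref{adjoint equivalence}, and $\F \O M$ is a $T_D$-algebra by the first part (it is a Funayama envelope). Hence every object of $\MTP$ is $\MTP$-isomorphic to an object of $\TDMTP$, so $\iota$ is essentially surjective, and therefore an equivalence. Alternatively, and more cleanly, (1) follows formally from (2) by two-out-of-three: $\Frm \simeq \MTP$ (via $\O, \F$) and $\Frm \simeq \TDMTP$ (via the corestricted $\F$ and $\O$), and these two equivalences are compatible with the inclusion $\TDMTP \hookrightarrow \MTP$ since in both cases the functor $\Frm \to \MTP$ resp.\ $\Frm \to \TDMTP$ is given by the same assignment $L \mapsto \F L$; composing one equivalence with the quasi-inverse of the other exhibits $\iota$ as an equivalence.

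The only point requiring genuine care — and the step I expect to be the main (though modest) obstacle — is verifying that $\F L$ really is a $T_D$-algebra for every frame $L$, i.e.\ that $\LC \F L$ join-generates $\F L$. This is where one must use the specific construction of $\F L$ as $\overline{\B L}$ rather than merely the abstract property $\O \F L \cong L$: join-density of $\B L$ in its MacNeille completion gives that every element of $\F L$ is a join of elements of $\B L$, hence a join of (finite joins of) locally closed elements, hence a join of locally closed elements. Once this is in place, everything else is a formal manipulation of the already-established equivalence $\O \dashv \F$ and the isomorphisms $\zeta_M$, $\rho_L$; no new computation with proximity morphisms is needed. I would also remark that \cref{lem: fom is td} can be invoked as a black box for the ``$\F \O M \cong M$ for $M$ a $T_D$-algebra'' half, so that the proof of \cref{WMT_D&Frm} is essentially a one-paragraph bookkeeping argument assembling \cref{adjunction of Frm and MT}, \cref{adjoint equivalence}, and \cref{lem: fom is td}.
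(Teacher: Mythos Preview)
Your proposal is correct and follows essentially the same route as the paper: both arguments assemble \cref{adjunction of Frm and MT}, \cref{adjoint equivalence}, and the fact that $\F L$ is always a $T_D$-algebra. The only differences are cosmetic: the paper proves (1) first (citing \cite[Prop.~IV.4.2]{MacLane} for the inclusion being an equivalence via $\F\O$) and then deduces (2), whereas you reverse the order; and you supply a direct argument that $\F L$ is $T_D$ via join-density of $\B L$ in its MacNeille completion, while the paper leans on \cref{lem: fom is td} (applied to $M=\F L$) for the same fact. One small redundancy: you do not actually need \cref{lem: fom is td} for the ``$\F\O M \cong M$'' half, since $\zeta_M$ is already an $\MTP$-isomorphism for \emph{every} $M$ by \cref{adjoint equivalence}; all that is required is that $\F\O M$ lands in $\TDMTP$, which your direct argument covers.
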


\begin{proof}
    (1) Let ${\mathcal e}:\TDMTP \to \MTP$ be the inclusion functor. By \cref{adjoint equivalence} and \cite[Prop.~IV.4.2]{MacLane}, we obtain an adjoint equivalence between $\TDMTP$ and $\MTP$ via the functors $\mathcal e$ and $\mathcal{F}\O$, making $\mathcal{F}\O :\MTP \to \TDMTP$  a quasi-inverse of $\mathcal e$.

    (2) Apply \ref{TdMTp equivalent to MTp} and \cref{adjunction of Frm and MT}.
    \end{proof}

The isomorphism $\varphi_M:M\to \mathcal{F}\mathcal{O}M$ may be seen as the $T_D$-reflection of $M$. Indeed, $\mathcal{F}\O M$ is always a $T_D$-algebra, and if $N$ is a $T_D$-algebra and $f:M\to N$ is a proximity morphism, we may define a proximity morphism $\widehat{f}:\mathcal{F}\O M\to N$ by setting $\widehat{f} = f \star \eta_{\mathcal{F}\O M}$. We then have a commutative diagram in $\MTP$:
\[
\begin{tikzcd}
    M
    \ar[d,"f",swap]
    \ar[rr,"\varphi_M"]
    && \mathcal{F}\O M
    \ar[dll,"\widehat{f}"]\\
    N
\end{tikzcd}
\]

By definition, up to isomorphism, $\O\varphi_M=\rho_{\O M}$ is the identity in $\Frm$. Therefore, the $T_D$-reflection does not do anything in $\Frm$. In fact, for frames there is no concept of the $T_D$-reflection since the language of frames is less expressive than that of MT-algebras. 

Since every MT-algebra is isomorphic to its $T_D$-reflection, by considering the inverse of this isomorphism, we see 
that $\TDMTP$ is also a coreflective subcategory of $\MTP$, with the coreflector given by the counit $\zeta$.

We conclude this section by showing that, unlike the situation in $\MTP$, isomorphisms in $\TDMTP$ are structure preserving bijections. 
For this we use the following lemma, which is a consequence of \cite[Thm.~2.5.11]{Esa2019}. 

\begin{lemma}\label{l: lift to boolean}
    For $f:L\to M$ a frame isomorphism,  $\mathcal{B}f:\mathcal{B}L\to \mathcal{B}M$ is a boolean isomorphism. 
    \end{lemma}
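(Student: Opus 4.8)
The statement is essentially a functoriality-plus-reflection fact about the boolean envelope, so the plan is to deduce it directly from the universal property of $\B$ together with the characterization of isomorphisms in $\SAC$ (or $\SA$) recalled in \cref{thm: skeletal and Heyting}. First I would recall that $\B:\DLat\to\BA$ is a functor (being a reflector), hence it preserves isomorphisms automatically: applying $\B$ to $f:L\to M$ and to its inverse $f^{-1}:M\to L$ gives boolean homomorphisms $\B f:\B L\to\B M$ and $\B(f^{-1}):\B M\to\B L$, and functoriality yields $\B(f^{-1})\circ\B f=\B(f^{-1}\circ f)=\B(1_L)=1_{\B L}$ and similarly $\B f\circ\B(f^{-1})=1_{\B M}$. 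Thus $\B f$ is a boolean isomorphism with inverse $\B(f^{-1})$.

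The only subtlety is that $f$ here is a \emph{frame} isomorphism, and one wants the conclusion phrased so that it interacts correctly with the interior operators (this is where \cite[Thm.~2.5.11]{Esa2019} enters): a frame isomorphism is in particular a Heyting isomorphism, so by the lifting statement recalled before \cref{thm: skeletal and Heyting}, $\B f$ is a morphism of interior algebras $(\B L,\square)\to(\B M,\square)$, and likewise $\B(f^{-1})$ is a morphism of interior algebras the other way. Hence $\B f$ is not merely a boolean isomorphism but an isomorphism in $\SA$ (equivalently in $\SAC$), so in particular it restricts to the frame isomorphism $f$ on $\O\B L=L$. I would state this slightly strengthened conclusion since it is what later arguments (the characterization of isomorphisms in $\TDMTP$) actually use, but the bare statement as written follows from the first paragraph alone.

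Concretely, the write-up would be: since $\B$ is a functor $\DLat\to\BA$ and $f$ is an isomorphism in $\HLat\subseteq\DLat$ with inverse $f^{-1}$, applying $\B$ gives $\B(f^{-1})\circ\B f = \B(f^{-1}\circ f)=\B(\id_L)=\id_{\B L}$ and symmetrically, so $\B f$ is a boolean isomorphism; moreover, as $f$ and $f^{-1}$ are Heyting homomorphisms, $\B f$ and $\B(f^{-1})$ are morphisms of interior algebras by \cref{thm: skeletal and Heyting} (see \cite[Thm.~2.5.11]{Esa2019}), so $\B f$ is an isomorphism of essential interior algebras. I do not anticipate a real obstacle here; the ``hard part'' is purely bookkeeping — making sure the ambient category in which we invoke functoriality ($\DLat$, or $\SAC$) is the right one so that the inverse of $\B f$ is again of the form $\B(\cdot)$, which is exactly what \cref{Essc coreflective}/\cite[Thm.~2.5.11]{Esa2019} guarantees.
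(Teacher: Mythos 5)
Your proposal is correct and takes essentially the same route as the paper: both arguments reduce to the fact that $\B$ is functorial (the paper phrases this via the equivalence $\HA\simeq\SA$ from \cref{Ess coreflective}, you via the reflector $\B:\DLat\to\BA$ preserving isomorphisms), so that $\B(f^{-1})$ is the inverse of $\B f$. Your added remark that $\B f$ is moreover an isomorphism of (essential) interior algebras is a harmless strengthening already implicit in the paper's appeal to \cite[Thm.~2.5.11]{Esa2019}.
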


\begin{proof}
     By \cref{Ess coreflective}, $\bf {HA}$ is equivalent to $\bf {SA}$.  
    Therefore, Heyting isomorphims $H_1 \to H_2$ correspond to interior algebra isomorphisms $\mathcal{B}H_1\to\mathcal{B}H_2$. But frame isomorphisms between frames are Heyting algebra isomorphisms, and interior algebra isomorphisms are boolean isomorphisms, so the result follows.
    \end{proof}

\begin{proposition}\label{prop: proximity isos}
    A proximity map $f:M\to N$ between $T_D$-algebras is an isomorphism in $\MTP$ iff it is an order-isomorphism.
\end{proposition}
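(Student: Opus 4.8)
The plan is to prove both implications, the forward one being essentially formal and the backward one requiring the machinery built up in the section. First suppose $f:M\to N$ is an isomorphism in $\MTP$. By \cref{iso}, $\O f:\O M\to\O N$ is a frame isomorphism. I want to upgrade this to an order-isomorphism of the whole algebras. The key point is that both $M$ and $N$ are $T_D$-algebras, so by \cref{lem: fom is td} (and our identifications) $M=\F\O M$ and $N=\F\O N$; moreover on the booleanizations, \ref{l:proxmorproperties 4} tells us that $f$ restricts to a boolean homomorphism $f|_{\cons M}:\cons M\to\cons N$, and since $\cons M=\B\O M$ this restriction is, by the defining formula of $\F h$ in \cref{frame to MT morphism} together with the uniqueness in the universal property of $\B$, exactly $\B(\O f)$. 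By \cref{l: lift to boolean}, $\B(\O f)$ is a boolean isomorphism $\B\O M\to\B\O N$. Finally, $f$ is given on all of $M=\overline{\B\O M}$ by $f(a)=\bigvee\{\B(\O f)(b)\mid b\in\B\O M,\ b\le a\}$ (this is \ref{def: proximitymor 4}/\cref{rem: prox equiv} combined with \ref{l:proxmorproperties 4}), i.e. $f$ is the MacNeille extension of a boolean isomorphism, and such an extension is an order-isomorphism between the MacNeille completions. Hence $f$ is an order-isomorphism.

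For the converse, suppose $f:M\to N$ is a proximity morphism that is also an order-isomorphism. Then its restriction $\O f=f|_{\O M}$ is an order-isomorphism between $\O M$ and $\O N$: indeed $f$ maps open elements to open elements by \ref{def: proximitymor 1}, and since $f$ is an order-iso its inverse $f^{-1}$ is order preserving and must send $\O N$ into $\O M$ as well (an element $u\in N$ is open iff $u=\bigvee\{x\in\O N\mid x\le u\}$, a condition preserved by the order-isomorphism $f^{-1}$ using that $f$ preserves the relevant joins on $\LC$, hence on $\O$, by \ref{def: proximitymor 3}; alternatively, $\O N$ is the image under $f$ of $\O M$ because $f$ is an order-isomorphism carrying the subframe $\O M$ onto a subframe, which must be $\O N$ by \ref{def: proximitymor 1} applied to $f$ and the order-iso property). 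An order-isomorphism between frames automatically preserves all joins and meets, so $\O f$ is a frame isomorphism. By \cref{iso}, $f$ is then an isomorphism in $\MTP$.

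The main obstacle is the forward direction: one must be careful that "isomorphism in $\MTP$" is phrased with respect to the non-standard composition $\star$, so $f$ a priori need not even be order preserving as a function, let alone a bijection. The argument circumvents this by not working with the abstract inverse directly but instead reconstructing $f$ from the frame isomorphism $\O f$ via the functor $\F$: since $M\cong\F\O M$ and $N\cong\F\O N$ as $T_D$-algebras, and the equivalence $\O\dashv\F$ is an adjoint equivalence with unit $\varphi$, the morphism $f$ corresponds under these identifications to $\F(\O f)$, whose defining formula in \cref{frame to MT morphism} exhibits it explicitly as the MacNeille extension of $\B(\O f)$. So the real content is the chain: $\O f$ frame iso $\Rightarrow$ $\B\O f$ boolean iso (\cref{l: lift to boolean}) $\Rightarrow$ its MacNeille completion is an order-iso $\Rightarrow$ $f$, being (up to the $\varphi$-identifications) this very map, is an order-iso. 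A small amount of bookkeeping is needed to check that the identifications $M\cong\F\O M$, $N\cong\F\O N$ intertwine $f$ with $\F(\O f)$, which follows from naturality of $\varphi$ (equivalently of $\zeta$, \cref{natural transformation}).
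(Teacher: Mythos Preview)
Your forward direction is essentially the paper's argument: \cref{iso} gives that $\O f$ is a frame isomorphism, \cref{l: lift to boolean} lifts this to a boolean isomorphism $\B\O f$, and its MacNeille extension is an order-isomorphism $\F\O M\to\F\O N$; since $M,N$ are $T_D$ these are (under the usual identifications) $M$ and $N$ themselves, and $f$ coincides with this extension because both are determined on $\cons M=\B\O M$ and extended by \ref{def: proximitymor 4}. The paper says this more tersely (``since the isomorphism lifting $\B\O f$ preserves arbitrary joins, it must coincide with $\F\O f=f$''), but the content is the same.

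For the converse you take a different route from the paper. The paper works directly with $\star$: since $f$ is an order-isomorphism and $M$ is $T_D$,
\[
(f^{-1}\star f)(a)=\bigvee\{f^{-1}f(x)\mid x\in\LC M,\ x\le a\}=\bigvee\{x\in\LC M\mid x\le a\}=1_M(a),
\]
and symmetrically $f\star f^{-1}=1_N$. You instead try to show that $\O f$ is a frame isomorphism and then invoke \cref{iso}. This runs into a genuine gap: neither of your two suggested arguments establishes that $f^{-1}$ carries $\O N$ into $\O M$. The first is circular (to run the join characterization of openness you already need $f^{-1}(x)\in\O M$ for $x\in\O N$), and the second only yields the containment $f[\O M]\subseteq\O N$ from \ref{def: proximitymor 1}, not equality. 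Indeed this containment can be strict: take the four-element boolean algebra $\{0,a,b,1\}$ with $\O M=\{0,a,1\}$ for $M$ and $\O N=\{0,a,b,1\}$ for $N$; both are $T_D$, the identity map is a proximity morphism $M\to N$ and an order-isomorphism, yet $\O f$ is the non-surjective inclusion $\{0,a,1\}\hookrightarrow\{0,a,b,1\}$. (This same example shows that the paper's direct argument also tacitly uses that $f^{-1}$ is a proximity morphism, so the converse as literally stated requires some care in either approach.)
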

\begin{proof}
    First suppose that $f:N\to M$ is a proximity isomorphism between $T_D$-algebras. By  \cref{iso}, $\O f$ is an isomorphism of frames and  $\mathcal{B}\O f:\mathcal{B}\O M\to \mathcal{B}\O N$ is a boolean isomorphism. Therefore, it can be lifted to an isomorphism between $\mathcal{F}\O M$ and $\mathcal{F}\O N$ (see, e.g., \cite[Thm.~7.41(ii)]{Davey2002}). As $M$ and $N$ are $T_D$-algebras, they are order-isomorphic to $\mathcal{F}\O M$ and $\mathcal{F}\O N$, and since the isomorphism lifting $\mathcal{B}\O f$ preserves arbitrary joins, it must coincide with $\mathcal{F}\O f=f$. 
    
    Conversely, suppose that $f:M\to N$ is an order-isomorphism. Then its inverse $f^{-1}:N\to M$ is an order-isomorphism. Therefore, for $a\in M$, we have
    \begin{align*}
        (f^{-1}\star f)(a)=\bigvee \{f^{-1}(f(x))\mid x\in \LC M, \, x\leq a\}=\bigvee \{x\in \LC M \mid x\leq a\}=1_M(a).
    \end{align*}
    A similar argument yields that $f\star f^{-1}=1_N$. Thus, $f$ is a proximity isomorphism. 
\end{proof}

\begin{remark}\label{rem: TDMTP}
    The category $\TDMTP$ has the following additional pleasant features:
    \begin{enumerate}[ref=\theremark(\arabic*)]
        \item Identities in $\TDMTP$ are identity functions. In fact, an MT-algebra $M$ is $T_D$ iff the identity $1_M$ in $\MTP$ is the identity function. Indeed, 
    \[
    M \mbox{ is } T_D \iff \forall a \in M, \ a=\bigvee \{ x \in \LC M \mid x \le a \} \iff \forall a \in M, \ a = 1_M(a). \qedhere
    \]
    \item The category $\TDMT$ is a wide subcategory of $\TDMTP$. For, if $f:M\to N$ is a $\TDMT$-morphism, it preserves all finite meets and joins by definition. By \cref{thm: O functor}, its restriction $\O f:\O M\to \O N$ is a frame morphism. For $a\in M$, because $M$ is $T_D$, ${a=\bigvee \{x\in \LC M\mid x\leq a\}}$. Since $f$ preserves all joins, $f(a)=\bigvee \{f(x)\mid x\in \LC M,x\leq a\}$, so it is a $\TDMTP$-morphism. \label[remark]{rem: TDMTP 2}
    \end{enumerate}
    
\end{remark}

\cref{fig: rel categories} summarizes the relationship between the categories introduced in this section. The connecting ``arrows'' should be understood as follows:
\begin{itemize}
    \item red two-sided arrows denote categorical equivalence;
    \item solid black hooks denote full embeddings, with reflections and coreflections noted;
    \item dashed black hooks denote non-full embeddings;
    \item blue  hooks denote wide embeddings;
    \item squiggly lines denote same objects but incomparable morphisms.
\end{itemize} 
(The same color coding will be used in the rest of the paper.)
\begin{figure}
    \centering
    \begin{tikzcd}[row sep=scriptsize, column sep=scriptsize]
	&&&&&& \BA \\
	\IA && \SA && \HA \\
	&&&&&& \DLat \\
	{\IAC} && {\SAC} && \HLat \\
	\\
	&& \Cons && \Frm && \MTP && {\TDMTP} \\
    \\
	&&&&&& \MT && {\TDMT}
	\arrow[hook', from=1-7, to=2-5]
	\arrow["\sf{refl}"', hook, from=1-7, to=3-7]
	\arrow[color={blue}, hook', from=2-1, to=4-1]
	\arrow["\sf{corefl}","\ref{Ess coreflective}"', hook', from=2-3, to=2-1]
	\arrow[color={red},"\ref{Ess coreflective}", leftrightarrow, from=2-3, to=2-5]
   \arrow[color={blue}, hook', from=2-3, to=4-3]
	\arrow[color={blue}, hook', from=2-5, to=4-5]
	\arrow["\sf{corefl}","\ref{Essc coreflective}"', hook', from=4-3, to=4-1]
	\arrow["\ref{Essc coreflective}",color={red}, leftrightarrow, from=4-3, to=4-5]
  	\arrow[hook, from=4-5, to=3-7]
	\arrow[dashed, hook, from=6-3, to=4-3]
	\arrow[color={red}, "\ref{thm: Frm = Cons}",leftrightarrow, from=6-3, to=6-5]
   \arrow[dashed, hook, from=6-5, to=4-5]
	\arrow[color={red},"\ref{adjunction of Frm and MT}",leftrightarrow, from=6-5, to=6-7]
   \arrow[color={red},"\ref{TdMTp equivalent to MTp}", leftrightarrow, from=6-7, to=6-9]
   \arrow[squiggly, no head, from=6-7, to=8-7]
	\arrow[color={blue},"\ref{rem: TDMTP 2}"', hook, from=8-9, to=6-9]
	\arrow[hook', from=8-9, to=8-7]
\end{tikzcd}
    \caption{Relationship between categories}
    \label{fig: rel categories}
\end{figure}
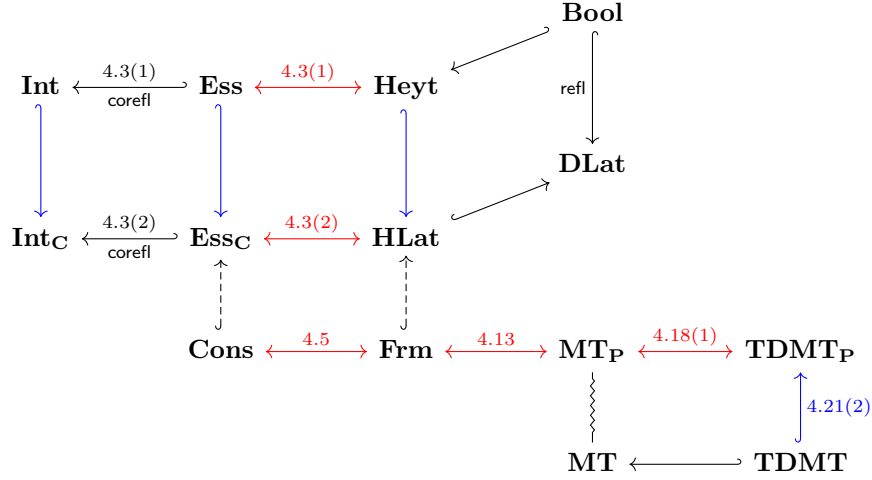

\section{\texorpdfstring{$T_D$}--duality for MT-algebras} \label{sec: TD duality for MT}

 In this section, we generalize the $T_D$-duality of Banaschewski and Pultr \cite{banaschewskitd} to the setting of MT-algebras. This, in particular, yields a generalization of the $T_D$-coreflection \cite[3.7.2]{banaschewskitd} from $T_0$-spaces to arbitrary ones. We argue that the MT setting is more natural for the $T_D$-duality than the frame setting by observing that, unlike the case of $T_D$-spatial frames, the spatial $T_D$-algebras form a full subcategory of $\MT$. 

\subsection{\texorpdfstring{$T_D$}--spectra of MT-algebras}
In this subsection, we introduce the $T_D$-spectrum of an MT-algebra and connect it to the $T_D$-spectrum of a frame. 
\begin{definition}
    For an MT-algebra $M$, let $\at_D M$ be the collection of its locally closed atoms. 
\end{definition}

We view $\at_D M$ as a subspace of the spectrum $\at M$ of $M$ as defined in \cref{Interior algebras and MT-algebras}. To connect $\at_D M$ to $\pt_D \O M$, we recall:

\begin{lemma}\ \label{l:pt(OM) upper bound}
\begin{enumerate}[ref=\thelemma(\arabic*)]
    \item {\em \cite[Prop.~4.8]{BezhanishviliRR2023}} For every MT-algebra $M$, there is a continuous map $\theta:\at M  \to \pt \sfO M$ given by $\theta(x) = {\uparrow} x\cap \sfO M$.\label[lemma]{l:pt(OM) upper bound 1}
    \item {\em \cite[Prop.~4.10]{BezhanishviliRR2023}} If $M$ is a $T_0$-algebra then $\theta$ is a subspace embedding.\label[lemma]{l:pt(OM) upper bound 2}
\end{enumerate} 
\end{lemma}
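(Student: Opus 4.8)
The plan is to treat (1) and (2) in turn, observing that almost all of the content of (2) is already packaged into the construction of $\theta$, so the $T_0$-hypothesis will be invoked at exactly one point, namely to obtain injectivity.

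For (1), I would first check that $\theta(x)={\uparrow}x\cap\O M$ is genuinely a point of $\O M$, i.e. a completely prime filter of the frame $\O M$. That it is a proper filter is immediate, since $\O M$ is a bounded sublattice of $M$ and $x$ is an atom (so $x\neq 0$, hence $0\notin{\uparrow}x$). The only substantive point is complete primeness: if $x\leq\bigvee S$ with $S\subseteq\O M$, then $x\leq s$ for some $s\in S$, because $M$, being a complete Boolean algebra, is a frame, so its atoms are completely join-prime (otherwise $x\wedge s=0$ for all $s\in S$ by atomicity, whence $x=x\wedge\bigvee S=\bigvee\{x\wedge s\mid s\in S\}=0$). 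Continuity of $\theta$ is then the one-line computation $\theta^{-1}(\sigma_{\O M}(u))=\{x\in\at M\mid x\leq u\}=\eta_M(u)$, which is open in $\at M$ for every $u\in\O M$.

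For (2), I would first upgrade that computation to the identity $\theta[\eta_M(u)]=\theta[\at M]\cap\sigma_{\O M}(u)$ for each $u\in\O M$. Since, by construction, $\{\eta_M(u)\mid u\in\O M\}$ is the topology on $\at M$ and $\{\sigma_{\O M}(u)\mid u\in\O M\}$ is the topology on $\pt\O M$, this says that $\theta$ is continuous and sends opens onto relative opens of its image; hence $\theta$ will be a subspace embedding as soon as it is shown to be injective, and injectivity is the sole place where the $T_0$-assumption enters. The injectivity argument I have in mind is this: given distinct atoms $x,y$ we have $x\wedge y=0$, so $x\leq\neg y$; since $\mathcal{WLC}M$ join-generates $M$, complete join-primeness of $x$ yields $w=s\wedge c\in\mathcal{WLC}M$ with $s\in\mathcal S M$, $c\in\C M$ and $x\leq w\leq\neg y$, so that $x\leq s$, $x\leq c$ and $y\wedge w=0$. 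From $y\wedge w=0$ one gets $y\not\leq s$ or $y\not\leq c$. In the first case, using that a saturated element is the meet of the opens above it, pick $u\in\O M$ with $s\leq u$ and $y\not\leq u$; then $x\leq s\leq u$, so $u\in\theta(x)\setminus\theta(y)$. In the second case $\neg c\in\O M$, and atomicity together with $x\leq c$, $y\not\leq c$ gives $x\not\leq\neg c$ and $y\leq\neg c$, so $\neg c\in\theta(y)\setminus\theta(x)$. Either way $\theta(x)\neq\theta(y)$.

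The only genuinely delicate step is this injectivity argument for (2): the case split on the saturated part versus the closed part of a weakly locally closed element, supported by the two small observations that an atom below a join lies below one of the joinands and that a saturated element equals the meet of the opens above it. Everything else — that $\theta$ is a well-defined continuous map in (1), and the reduction of ``subspace embedding'' to ``injective'' in (2) — is routine bookkeeping.
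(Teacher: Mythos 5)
The paper does not prove this lemma; it is quoted from \cite[Props.~4.8, 4.10]{BezhanishviliRR2023}, so there is no in-text argument to compare against. Your reconstruction is correct and complete: complete join-primeness of atoms gives that $\theta(x)$ is a completely prime filter, the identity $\theta^{-1}(\sigma_{\O M}(u))=\eta_M(u)$ gives continuity and (together with $\theta[\eta_M(u)]=\theta[\at M]\cap\sigma_{\O M}(u)$) reduces the embedding claim to injectivity, and your case split on the saturated versus closed factor of a weakly locally closed element $w$ with $x\le w\le\neg y$ correctly isolates the one place where the $T_0$ hypothesis is used.
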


We show that for $T_0$-algebras, the above embedding yields that $\at_D M$ is hommeomorphic  to $\pt_D \O M$. For this, we use the following:

\begin{lemma}\label{l: atomsint0}
    For a $T_0$-algebra $M$, an element $x\in M$ is an atom iff for each $u\in \sfO M$ we have $x\leq u$ iff $x\nleq \neg u$.
\end{lemma}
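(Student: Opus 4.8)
The plan is to prove the two implications separately; only the ``if'' direction will use the $T_0$ hypothesis. For ``only if'', suppose $x$ is an atom. Then $x\neq 0$, and for every $a\in M$ we have $x\wedge a\in\{0,x\}$ because $x\wedge a\le x$, so exactly one of $x\le a$ and $x\le\neg a$ holds. Taking $a=u$ for $u\in\sfO M$ yields both implications $x\le u\Rightarrow x\nleq\neg u$ and $x\nleq\neg u\Rightarrow x\le u$, so the biconditional holds for every open $u$. No separation axiom is needed here.

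For ``if'', first note that the hypothesis forces $x\neq 0$: if $x=0$ then $x\le 1$ is true while $x\nleq\neg 1=0$ is false. Passing to contrapositives, the hypothesis is equivalent to the conjunction ``$x\neq 0$, and $x\le u$ or $x\le\neg u$ for every $u\in\sfO M$''. Now argue by contradiction: if $x$ is not an atom, pick $y$ with $0<y<x$. Since $M$ is a $T_0$-algebra, $\mathcal{WLC} M$ join-generates $M$, so $y$ is a join of weakly locally closed elements below it; as $y\neq 0$, one of them, call it $w$, is nonzero. Write $w=s\wedge c$ with $s=\bigwedge U$ a meet of opens ($U\subseteq\sfO M$) and $c=\neg v$ the complement of an open $v\in\sfO M$, using that the closed elements of $M$ are precisely the complements of the opens.

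The core of the argument is to feed the opens in $U$ together with $v$ into the hypothesis. For $u\in U$ we have $w\le s\le u$ with $w\neq 0$, hence $w\nleq\neg u$; since $w\le y<x$, the option $x\le\neg u$ would force $w\le\neg u$, a contradiction, so $x\le u$, and therefore $x\le\bigwedge U=s$. Likewise $w\le c=\neg v$ with $w\neq 0$ gives $w\nleq v$, so the option $x\le v$ is impossible and $x\le\neg v=c$. Combining, $x\le s\wedge c=w\le y<x$, which is absurd; hence $x$ is an atom. The one place that needs care — and the step I would regard as the crux — is putting $w$ into the right normal form (a meet of opens met with the complement of a single open) and, in each appeal to the hypothesis, using the nonzero witness $w\le y<x$ to discard the unwanted disjunct; once that bookkeeping is arranged, the rest is immediate.
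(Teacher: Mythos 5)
Your proof is correct and follows essentially the same route as the paper's: decompose a nonzero element below $x$ via the $T_0$ hypothesis into the form $\bigwedge U\wedge\neg v$ and apply the dichotomy ``$x\le u$ or $x\le\neg u$'' to each open involved. The only (cosmetic) differences are that you extract a nonzero weakly locally closed witness $w$ explicitly rather than treating the element itself as weakly locally closed, and you conclude via $x\le w\le y<x$ where the paper concludes via $y\le x\le\neg y$.
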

\begin{proof}
    Clearly if $x\in M$ is an atom, then the condition in the statement is satisfied. For the converse, if $x\in M$ satisfies the condition, then $x\neq 0$. Let $y\leq x$ with $y\neq 0$. We show that $x\leq y$. Since $M$ is $T_0$, $y=\bigwedge S\wedge \neg v$ for some $S\subseteq \sfO M$ and $v\in \sfO M$. If $x\nleq y$ then either $x\nleq u$ for some $u\in S$  or $x\nleq \neg v$. By assumption, in the former case we get that $x\leq \neg u$, and in the latter case that $x\leq v$. In both cases, $x\leq \bigvee \{\neg u \mid u \in S \} \vee v=\neg y$. Therefore, $y\leq x\leq \neg y$, a contradiction. 
    \end{proof}
Recall (see, e.g., \cite[Sec.~II.3.3]{PicadoPultr2012}) that in a frame $L$, we have a bijection between completely prime filters and prime elements given by $P\mapsto \bigvee (L{\setminus}P)$. 

\begin{lemma} {\em \cite[Prop.~2.6.1, 2.6.2]{banaschewskitd}} \label{l:slicing iff covered}
    Each slicing filter of a frame is  completely prime. Moreover, for a completely prime filter $P$, the following are equivalent:
    \begin{enumerate}
        \item $P$ is a slicing filter;
        \item The corresponding prime is a covered prime;
        \item The corresponding prime is completely meet-irreducible.
    \end{enumerate}
\end{lemma}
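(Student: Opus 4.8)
The statement packages together \cite[Prop.~2.6.1, 2.6.2]{banaschewskitd}, so the plan is to reconstruct those arguments. First I would recall the bijection between completely prime filters $P$ and prime elements $p = \bigvee(L \setminus P)$ of a frame $L$: under this correspondence, $a \in P$ iff $a \nleq p$, and $p$ is the largest element not in $P$. Equivalences (2)$\Leftrightarrow$(3) are then a general order-theoretic fact, independent of slicing: a prime $p$ is completely meet-irreducible iff $p$ is covered (i.e. $p \lessdot q$ for some $q$). The forward direction is immediate since if $p = \bigwedge S$ with $p < s$ for all $s \in S$, then any $q$ covering $p$ would force $q \leq s$ for all $s$, so $q \leq p$, a contradiction; hence $p = \bigwedge\{s \in L \mid p < s\}$ and, $p$ being meet-irreducible, this meet must be attained, giving a cover. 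Conversely, if $p \lessdot q$ and $p = \bigwedge S$, then either some $s \in S$ equals $p$, or every $s > p$, whence $s \geq q$ (by the covering property applied to $p \leq p \vee s \leq \text{?}$ — more carefully, $p < p \vee s$ forces $p \vee s \geq q$, so $s \geq q \wedge$ nothing useful directly); the cleaner route is: $p$ meet-irreducible $\Leftrightarrow$ $L \setminus {\downarrow}p$ is a filter $\Leftrightarrow$ the filter $F_p := \{a : a \nleq p\}$ is the unique largest filter with $p = \bigvee(L\setminus F_p)$, and covered-ness of $p$ says exactly that $F_p$ contains some $b$ with $b \wedge(\text{stuff}) \leq p$ realized by a cover pair.

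For the "each slicing filter is completely prime" claim and (1)$\Leftrightarrow$(2): suppose $F$ is slicing, so $F$ is prime and there are $b \in F$, $a \notin F$ with $a \lessdot b$. I would first show $F$ is completely prime. Let $\bigvee S \in F$ with $S \cap F = \varnothing$ for contradiction; using primeness, finite subjoins of $S$ lie outside $F$, and one leverages the cover $a \lessdot b$ together with $b \in F$: for any $s \in S$, consider $a \vee (b \wedge s)$. Since $a \leq a \vee(b\wedge s) \leq b$ and $a \lessdot b$, either $a \vee (b \wedge s) = a$, i.e. $b \wedge s \leq a$, or $a \vee (b \wedge s) = b$, i.e. $b \leq a \vee s$. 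In the second case $b \wedge \neg(\cdots)$ — rather, if $b \leq a \vee s$ then since $b \in F$ and $F$ is a prime filter with $a \notin F$, we would get $s \in F$, contradicting $S \cap F = \varnothing$. So $b \wedge s \leq a$ for every $s \in S$, hence $b \wedge \bigvee S = \bigvee\{b \wedge s\} \leq a$; but $b, \bigvee S \in F$ gives $b \wedge \bigvee S \in F$, so $a \in F$, a contradiction. Thus $F$ is completely prime. The corresponding prime $p = \bigvee(L \setminus F)$ then satisfies $a \leq p$ (as $a \notin F$) and $b \nleq p$ (as $b \in F$), and from $a \lessdot b$ plus $a \leq p$, $p < b \vee p \leq$ ... one checks $p \lessdot (b \vee p)$: indeed if $p \leq x \leq b \vee p$ then $a \leq x$ and ... the covering of $p$ follows from the covering $a \lessdot b$ by a short interval-isomorphism-type argument. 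So $p$ is covered, giving (1)$\Rightarrow$(2). For (2)$\Rightarrow$(1), if the prime $p$ is covered, say $p \lessdot q$, then $F = F_p = \{x : x \nleq p\}$ is prime (as $p$ is prime) and taking $b = q \in F$, $a = p \notin F$, we have $a \lessdot b$, so $F$ is slicing.

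The step I expect to be the main obstacle is verifying cleanly that the cover $a \lessdot b$ \emph{transfers} to a cover $p \lessdot (p \vee b)$ in the (1)$\Rightarrow$(2) direction — one must rule out that $p \vee b = p$ (which fails since $b \nleq p$) and show no element sits strictly between; this uses that $[a, b]$ is a two-element interval and that joining with $p$ is a monotone map whose restriction to $[a,b]$ lands in $[p, p \vee b]$, plus the fact that $x \mapsto x \wedge$ (something) or the frame distributivity controls the fibers. All the rest is routine filter/prime-element bookkeeping, so I would present (2)$\Leftrightarrow$(3) first as the general lattice fact, then the complete primeness of slicing filters, then close the loop (1)$\Leftrightarrow$(2) via the $F_p \leftrightarrow p$ dictionary.
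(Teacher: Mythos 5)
The paper does not prove this lemma; it imports it verbatim from Banaschewski--Pultr, so your reconstruction can only be judged against the standard argument there. Most of your proposal matches it: the complete-primeness of slicing filters via $a \vee (b\wedge s)$ and frame distributivity is exactly right, and your (1)$\Leftrightarrow$(2) via the dictionary $P \leftrightarrow p=\bigvee(L\setminus P)$ is sound. The step you flag as the main obstacle --- transferring $a \lessdot b$ to $p \lessdot p\vee b$ --- is in fact routine and closes in two lines: if $p \le x \le p\vee b$, apply the cover to $a \vee (b\wedge x) \in [a,b]$; if it equals $b$ then $b \le a\vee x = x$, so $x = p\vee b$; if it equals $a$ then $b\wedge x \le a \le p$, so $x = x\wedge(p\vee b) = (x\wedge p)\vee(x\wedge b) \le p$ by distributivity, so $x = p$.

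The genuine gap is elsewhere, in (2)$\Leftrightarrow$(3). Your treatment of this equivalence is garbled and, in particular, the implication ``$p$ covered $\Rightarrow$ $p$ completely meet-irreducible'' is never actually established: both of your attempts route through $p \vee s$ and stall (as you yourself note with ``nothing useful directly''), and your fallback ``cleaner route'' is too vague to check. Note also that this implication is \emph{false} for general lattice elements (the bottom of the diamond $M_3$ is covered but not meet-irreducible), so it cannot be ``a general order-theoretic fact, independent of slicing''; primeness of $p$ is essential. The correct argument goes through $q\wedge s$ rather than $p\vee s$: suppose $p \lessdot q$ and $p = \bigwedge S$ with $p < s$ for all $s\in S$. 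For each $s$ we have $p \le q\wedge s \le q$, so by the cover $q\wedge s \in \{p,q\}$; the case $q\wedge s = p$ is impossible because $p$ is prime and neither $q \le p$ nor $s\le p$; hence $q \le s$ for every $s$, giving $q \le \bigwedge S = p$, a contradiction. The converse (3)$\Rightarrow$(2) you have essentially right: if $p^\ast := \bigwedge\{s \mid p < s\}$ equalled $p$, complete meet-irreducibility would be violated, so $p < p^\ast$ and $p \lessdot p^\ast$. With that one use of primeness supplied, your outline assembles into a complete proof.
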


We are ready to prove the main result of this subsection.

\begin{theorem}\label{c: td inclusion into at}
    For a $T_0$-algebra $M$, the embedding $\theta:\at M\to \pt \sfO M$ restricts and co-restricts to a homeomorphism $\theta':\at_D M\to \pt_D\sfO M$.  
    \[\begin{tikzcd}
	\at M && \pt \O M \\
	\\
	\at_D M && \pt_D \O M
	\arrow["\theta", from=1-1, to=1-3]
	\arrow[hook,  from=3-1, to=1-1]
    \arrow["\theta'", from=3-1, to=3-3]
	\arrow[hook, from=3-3, to=1-3]
\end{tikzcd}\]
\end{theorem}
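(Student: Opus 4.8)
The plan is to show that $\theta$ maps $\at_D M$ bijectively onto $\pt_D \O M$, and since $\theta$ is already known to be a subspace embedding for $T_0$-algebras by \cref{l:pt(OM) upper bound 2}, the restriction-corestriction $\theta'$ is automatically a homeomorphism onto its image once we verify $\theta[\at_D M] = \pt_D \O M$. So the whole content is this set equality, which splits into two inclusions.

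\textbf{First inclusion: $\theta[\at_D M] \subseteq \pt_D \O M$.} Take a locally closed atom $x \in \at_D M$. I would write $x = u \wedge \Diamond c$ for some $u \in \O M$ (using the footnote characterization of locally closed elements), or more conveniently $x = u \wedge c$ with $u$ open and $c$ closed. The point $\theta(x) = {\uparrow}x \cap \O M$ is a completely prime filter of $\O M$; by \cref{l:slicing iff covered} it suffices to show the corresponding prime element $p := \bigvee(\O M \setminus \theta(x)) = \bigvee\{v \in \O M \mid x \nleq v\}$ is covered. The natural candidate for an element covering $p$ is $p \vee u$ where $u$ is the open part of $x$: one should check $x \nleq p$ (so $p \neq p\vee u$), and that $p \vee u$ is the least open element $\geq p$ that contains $x$, with nothing strictly in between — this uses that $x$ is an atom, so for any open $w$ with $p \leq w \leq p \vee u$, either $x \leq w$ (forcing $w = p\vee u$ by minimality) or $x \nleq w$ (forcing $w \leq p$, hence $w = p$). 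The equivalence in \cref{l:slicing iff covered} then gives that $\theta(x)$ is slicing.

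\textbf{Second inclusion: $\pt_D \O M \subseteq \theta[\at_D M]$.} Given a slicing filter $P$ of $\O M$, I first use \cref{l:pt(OM) upper bound 2} to find $x \in \at M$ with $\theta(x) = P$ — wait, this needs care, since $\theta$ need only be an embedding, not surjective; but $P$ lies in the image iff ${\uparrow}x \cap \O M = P$ for some atom $x$, and the natural choice is $x = \bigwedge\{u \in P\} \wedge \neg\bigvee\{v \in \O M \mid v \notin P\}$, i.e., the meet of the saturation of $P$ with the complement of the prime $p = \bigvee(\O M\setminus P)$. One checks $x \neq 0$ (this is where slicing/coveredness of $p$ is used: $p \lessdot q$ for some open $q$, and $q \wedge \neg p$ witnesses $x \neq 0$ after intersecting appropriately), that $x$ is an atom (via \cref{l: atomsint0}: for open $u$, $x \leq u$ iff $u \in P$ iff $x \nleq \neg u$), and that $x$ is locally closed — indeed $x = s \wedge \neg p$ where $s = \bigwedge P \in \S M$ is saturated, and in fact using coveredness $p \lessdot q$ one can replace the saturated part by the single open $q$ to write $x$ as $q \wedge \Diamond(\neg p)$ or similar, exhibiting $x \in \LC M$. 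Then $\theta(x) = P$ by construction.

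\textbf{The main obstacle} I anticipate is the second inclusion, specifically showing the candidate atom $x$ associated to a slicing filter $P$ is both nonzero and genuinely locally closed (not merely weakly locally closed). The nonzeroness is exactly where the slicing hypothesis — as opposed to mere complete primeness — must be invoked, via \cref{l:slicing iff covered}: the covering relation $p \lessdot q$ in $\O M$ provides the closed "co-atom-like" element that cuts the saturated set $\bigwedge P$ down to a single atom, and simultaneously lets us rewrite $x$ using an honest open element rather than an infinite meet, securing local closedness. The $T_0$ hypothesis enters both through \cref{l:pt(OM) upper bound 2} (to know $\theta$ is an embedding, so bijectivity upgrades to homeomorphism) and through \cref{l: atomsint0} (to verify the candidate element is an atom). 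Once these are in place, the continuity and openness of $\theta'$ are inherited from $\theta$ being a subspace embedding, so no separate topological argument is needed.
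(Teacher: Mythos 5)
Your proposal is correct and follows essentially the same route as the paper: the forward inclusion exhibits the same covering pair (your $p \lessdot p\vee u$ is the paper's $\neg\diamond x \lessdot u\vee\neg\diamond x$, since the prime corresponding to $\theta(x)$ is exactly $\neg\diamond x$), and the surjectivity argument uses the same candidate atom $x=\bigwedge F\wedge\neg p$ together with \cref{l: atomsint0} and \cref{l:slicing iff covered}. The only (minor) divergence is that you get nonzeroness and local closedness in one stroke by showing $x=q\wedge\neg p$ for a cover $p\lessdot q$ of the prime itself, whereas the paper proves $x\neq 0$ by contradiction via complete meet-irreducibility and then separately extracts the locally closed form from the $T_0$ decomposition of $x$; both rest on the same covering computation.
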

\begin{proof}
To see that $\theta'$ is well defined, let $x\in \at_D M$. Since $x\in\LC M$, $x = u \wedge \diamond x$ for some $u \in \O M$ (see footnote \ref{footnote 2}), then $u  \vee \neg \diamond x \in {\uparrow} x\cap \sfO M$ and $\neg \diamond x \notin {\uparrow} x\cap \sfO M$. We show that $\neg \diamond x \lessdot u \vee \neg \diamond x$. Suppose $\neg \diamond x \leq v \leq u \vee \neg \diamond x$ for some $v \in \sfO M$. Since $x$ is an atom, either $x \leq v$ or $x \leq \neg v$. In the former case, $x \vee \neg \diamond x \le v$, so $(u \wedge \diamond x)\vee\neg\diamond x\le v$, and hence $u\vee \neg\diamond x\le v$. In the latter case, $\diamond x \le \neg v$ since $\neg v$ is closed, so $v\le \neg\diamond x$. Thus, ${\uparrow} x\cap \sfO M$ is a slicing filter. 

That $\theta'$ is one-to-one follows from \cref{l:pt(OM) upper bound 2}. To show it is onto, we need to show that every slicing filter $F\subseteq \sfO M$ is of the form ${\uparrow}x\cap \O M$ for some locally closed atom $x$. Let \[x=\bigwedge F \wedge \bigwedge \{\neg a\mid a\notin F\}.\] 

\begin{claim}\label{claim 1}
    $x$ is an atom.
\end{claim}

\begin{proof}[Proof of Claim]
    Since for each $u\in \sfO M$ we have $x\leq u$ or $x\leq \neg u$, if $x\ne 0$, it is an atom by \cref{l: atomsint0}. Thus, it is enough to show that $x\ne 0$. Let $p$ be the covered prime corresponding to $F$. Since $p=\bigvee (L\setminus F)$, we have $x=0$ iff $\bigwedge F\leq p$. Indeed, \[x = 0 \Longrightarrow \bigwedge F \le \neg\bigwedge \{\neg a \mid a \notin F \} = \bigvee \{ a \mid a \notin F\} = p.\] Conversely, 
\begin{eqnarray*}
    \bigwedge F \le p & \Longrightarrow & \bigwedge F \le \bigvee \{ a \mid a \notin F \} \\
    & \Longrightarrow & \bigwedge F \wedge \neg\bigvee \{ a \mid a \notin F \} = 0 \\
    & \Longrightarrow & \bigwedge F \wedge \bigwedge \{ \neg a \mid a \notin F \} = 0 \Longrightarrow x = 0.
\end{eqnarray*}
But $\bigwedge F \le p$ iff $\bigwedge \{u\vee p\mid u\in F\} = p$. 
Since for $A\subseteq\O M$, $\bigwedge A\in \sfO M$ implies $\bigwedge A=\bigwedge_{\sfO M} A$, the last condition is equivalent to $\bigwedge_{\sfO M}\{u\vee p\mid u\in F\}=p$. However, because $p$ is covered, by \cref{l:slicing iff covered} this means that $u\leq p$ for some $u\in F$. The obtained contradiction proves that $x$ is an atom.
\end{proof} 

\begin{claim}\label{claim 2}
    $F = {\uparrow} x\cap \sfO M$.
\end{claim}

\begin{proof}[Proof of Claim]
    Let $u\in \sfO M$. First suppose that $u\in F$. Then $x\le\bigwedge F\le u$, and so $u \in {\uparrow} x\cap \sfO M$. Next suppose that $u\notin F$. Then $x\le \neg u$. By \cref{claim 1}, $x$ is an atom, so $x \not\le u$, and hence $u\notin {\uparrow} x\cap \sfO M$.
\end{proof} 

\begin{claim}
    $x$ is locally closed.
\end{claim} 

\begin{proof}[Proof of Claim]
    By \cref{claim 2}, $F = {\uparrow} x\cap \sfO M$. Since $F$ is slicing, there exist $a,b\in \sfO M$ such that $a \lessdot b$, $a \notin \upset x \cap \sfO M$, and $b \in \upset x \cap \sfO M$. By \cref{claim 1},  $x$ is an atom. This together with $M$ being a $T_0$-algebra yields that $x = \bigwedge S \wedge \neg v$ for some $S \subseteq \sfO M$ and $v \in \sfO M$. For each $u \in S$, we have $a \leq a \vee (b \wedge u) \leq b$. Since $a \lessdot b$, either $a = a \vee (b \wedge u)$ or $b = a \vee (b \wedge u)$. In the former case, $x \leq b \wedge u \leq a$, and hence $a \in \upset x \cap \sfO M$, a contradiction. Therefore, $b = a \vee (b \wedge u)$, and thus 
    \[
    b \wedge \neg a = [a \vee (b \wedge u)] \wedge \neg a = b \wedge u \wedge \neg a \leq u.
    \] 
    Since this is true for each $u\in S$, we obtain $b \wedge \neg a \leq \bigwedge S$. Consequently, 
    \[
    x \le b \wedge \neg a \wedge \neg v \le  \bigwedge S \wedge \neg v \le x,
    \]
   yielding that $x = b \wedge \neg a \wedge \neg v$. Thus, $x$ is locally closed since $b$ is open and $\neg a \wedge \neg v$ is closed.
\end{proof} 

Consequently, $\theta' : \at_D M \to \pt_D \O M$ is a homeomorphism.
\end{proof}

\begin{corollary} \label{cor: objects}
    For a $T_D$-algebra $M$, $\at M$ is homeomorphic to $\pt_D\sfO M$.
\end{corollary}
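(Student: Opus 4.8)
The plan is to deduce this immediately from \cref{c: td inclusion into at} by checking two things: that a $T_D$-algebra is a $T_0$-algebra, and that in a $T_D$-algebra every atom is locally closed, so that $\at_D M = \at M$.

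First I would observe that every open element is saturated (a single open is trivially a meet from $\O M$), so $\Box b \wedge \Diamond c$ is of the form $s \wedge c'$ with $s \in \S M$ and $c' \in \C M$; hence $\LC M \subseteq \WLC M$. Consequently, if $\LC M$ join-generates $M$ then so does $\WLC M$, i.e., every $T_D$-algebra is a $T_0$-algebra, and \cref{c: td inclusion into at} applies to $M$.

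Next I would show $\at_D M = \at M$ for a $T_D$-algebra $M$. The inclusion $\at_D M \subseteq \at M$ is by definition. For the reverse, let $x \in \at M$. Since $M$ is $T_D$, we have $x = \bigvee\{y \in \LC M \mid y \le x\}$. As $x$ is an atom, each such $y$ is either $0$ or $x$, and since the join equals the nonzero element $x$, there must be some $y \in \LC M$ with $y = x$; thus $x \in \LC M$, i.e., $x \in \at_D M$.

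Combining these, $\theta' : \at_D M \to \pt_D \O M$ from \cref{c: td inclusion into at} is a homeomorphism and $\at_D M = \at M$, so $\at M$ is homeomorphic to $\pt_D \O M$. I do not anticipate a real obstacle here; the only point needing a moment's care is the identification $\at_D M = \at M$, which is the step that genuinely uses the $T_D$ (rather than merely $T_0$) hypothesis.
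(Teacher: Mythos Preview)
Your proposal is correct and follows essentially the same approach as the paper: apply \cref{c: td inclusion into at} and then identify $\at_D M$ with $\at M$. The paper's proof simply asserts that $\at_D M = \at M$ for a $T_D$-algebra (and leaves the $T_D\Rightarrow T_0$ implication implicit), whereas you spell out both of these verifications explicitly.
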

\begin{proof}
By \cref{c: td inclusion into at}, there is a homeomorphism $\theta':\at_D M\cong \pt_D\sfO M$. Since $M$ is a $T_D$-algebra, $\at_D M=\at M$, yielding the result.
\end{proof}

The assumption in \cref{c: td inclusion into at} that $M$ is a $T_0$-algebra is necessary. (Note that the assumption is used to show that $\theta':\at_D M\to \pt_D \O M$ is onto.)

\begin{example}
    In the MT-algebra $M$ of \cref{r:isonotbijection}, $\upset {a} \cap \sfO M=\{1\}$ is a slicing filter. But $a \in \at M$ is not locally closed because $\LC M=\{0,1\}$. Thus, $\theta'$ is not onto.
   \end{example}

\subsection{\texorpdfstring{$T_D$}--reflection of MT-algebras and \texorpdfstring{$T_D$}--coreflection of topological spaces}\label{sec: TD coreflection}

We now focus our attention on morphisms and look at the MT-analogues of D-morphisms of Banaschewski and Pultr (see \cref{def: D morphism}), which we will also call D-morphisms. Our aim is to show that the spatial $T_D$-algebras form a full reflective subcategory of the category of MT-algebras and D-morphisms, thus yielding a pointfree version of the $T_D$-coreflection of $T_0$-spaces defined in \cite{banaschewskitd}. We emphasize that this $T_D$-reflection is not expressible in the language of frames. 

\begin{definition}\ \label{defn: TopLC}
    \begin{enumerate}
        \item We call a continuous map \emph{locally closed} if it maps locally closed points to locally closed points. 
        \item Let $\TopLC$ be the wide subcategory of $\Top$ whose morphisms are locally closed maps.
    \end{enumerate} 
\end{definition}

We point out that identity maps are locally closed and that the composition of two locally closed maps is locally closed, so $\TopLC$ indeed forms a category. We let $\ToTopLC$ be the full subcategory of $\TopLC$ consisting of $T_0$-spaces\footnote{As follows from \cref{lem: D morphism}, $\ToTopLC$ is precisely the category $_D\Top$ defined in \cite[3.7.2]{banaschewskitd}.}, and note that $\TDTop$ is a full subcategory of $\ToTopLC$ since every continuous map between $T_D$-spaces is automatically locally closed. 

We show that locally closed maps between $T_0$-spaces can be seen as topological duals of D-morphisms. For this we recall the following result from \cite[Prop.~2.7.1]{banaschewskitd}: 

\begin{lemma} 
\label{l: every slicing is neighborhood}
    For a $T_0$-space $X$, every slicing filter of $\Omega X$ is of the form 
    \[
    F_x := \{ U \in \Omega X : x \in U \}
    \] 
    for some locally closed $x\in X$.
\end{lemma}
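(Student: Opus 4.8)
The plan is to obtain this as an immediate consequence of \cref{c: td inclusion into at} applied to the MT-algebra $\P X$. Since $X$ is a $T_0$-space, $\P X$ is a $T_0$-algebra (it lies in $\ToMT$ by \cref{thm: MT duality for T0 and TD 1}), and $\O \P X = \Omega X$. The atoms of $\P X$ are exactly the singletons $\{x\}$ with $x \in X$, and such an atom belongs to $\at_D \P X$ — that is, $\{x\}$ is a locally closed element of $\P X$ — precisely when $\{x\} = U \cap C$ for some open $U$ and closed $C$ in $X$, i.e., precisely when $x$ is a locally closed point of $X$.

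Next I would evaluate the map $\theta$ of \cref{l:pt(OM) upper bound 1} on a singleton: $\theta(\{x\}) = \upset\{x\} \cap \Omega X = \{U \in \Omega X : x \in U\} = F_x$. By \cref{c: td inclusion into at}, $\theta$ restricts and co-restricts to a homeomorphism $\theta' : \at_D \P X \to \pt_D \Omega X$; in particular $\theta'$ is surjective. Hence, given a slicing filter $F$ of $\Omega X$ — equivalently, a point of $\pt_D \Omega X$ — there is a locally closed atom $\{x\}$ of $\P X$, i.e.\ a locally closed point $x \in X$, with $F = \theta'(\{x\}) = F_x$, which is exactly the assertion.

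All the genuinely nontrivial content — the surjectivity of $\theta'$, which is where the $T_0$ hypothesis is actually used — is already packaged in \cref{c: td inclusion into at}, so the remaining work is the bookkeeping above: identifying atoms of $\P X$ with points of $X$, matching ``locally closed atom'' with ``locally closed point'', and computing $\theta$ on singletons. I do not expect any of these to be an obstacle; the one point meriting a line of justification is why $\P X$ is a $T_0$-algebra, which follows from \cref{thm: MT duality for T0 and TD 1} (or can be checked directly by noting that the weakly locally closed subsets join-generate $\P X$ when $X$ is $T_0$). Finally, I would remark that the argument is not circular: \cref{c: td inclusion into at} and the results it depends on (\cref{l: atomsint0,l:slicing iff covered,l:pt(OM) upper bound}) make no use of the present lemma. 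Alternatively, one could give a self-contained proof by transcribing the surjectivity part of the proof of \cref{c: td inclusion into at}: given a slicing filter $F$, set $x = \bigcap F \cap \bigcap\{X \setminus a : a \notin F\}$ and show directly that this is a singleton whose point is locally closed with $F = F_x$.
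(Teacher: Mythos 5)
Your argument is correct, but it takes a genuinely different route from the paper: the paper offers no proof at all for this lemma, simply citing it as \cite[Prop.~2.7.1]{banaschewskitd}, whereas you derive it internally from \cref{c: td inclusion into at} applied to $M=\P X$. All the individual steps check out: $\P X$ is a $T_0$-algebra by \cref{thm: MT duality for T0 and TD 1}, $\O\P X=\Omega X$, atoms of $\P X$ are singletons, $\{x\}\in\LC\P X$ iff $x$ is a locally closed point (a fact the paper itself uses later, e.g.\ in \cref{l: lc iff dmt}), and $\theta(\{x\})=F_x$; surjectivity of $\theta'$ then gives exactly the statement. Your circularity check is also right and worth making explicit, since \cref{c: td inclusion into at} appears earlier in the paper and its proof relies only on \cref{l:pt(OM) upper bound,l: atomsint0,l:slicing iff covered}, none of which invoke the present lemma. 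What your route buys is a demonstration that the MT-theoretic spectrum theorem subsumes the Banaschewski--Pultr result rather than merely running parallel to it --- indeed, specializing the surjectivity argument of \cref{c: td inclusion into at} to $\P X$ recovers precisely the construction $x=\bigcap F\cap\bigcap\{X\setminus a: a\notin F\}$ you sketch at the end, which is essentially the original frame-theoretic proof. What the paper's choice buys is independence: by citing the external result it keeps the logical dependencies of Section 5.2 on Section 5.1 minimal. Either way, your proof is sound.
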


\begin{proposition} \label{lem: D morphism}
    A continuous map $f:X\to Y$ between $T_0$-spaces is locally closed iff $\Omega f$ is a D-morphism. 
\end{proposition}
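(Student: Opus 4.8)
The plan is to unwind both directions through the correspondence between slicing filters of $\Omega X$ and locally closed points of $X$, which is exactly what \cref{l: every slicing is neighborhood} provides. Fix a continuous map $f : X \to Y$ between $T_0$-spaces, so $\Omega f = f^{-1} : \Omega Y \to \Omega X$ is a frame morphism.

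For the ``only if'' direction, suppose $f$ is locally closed and let $F$ be a slicing filter of $\Omega Y$. By \cref{l: every slicing is neighborhood}, $F = F_y = \{ V \in \Omega Y : y \in V \}$ for some locally closed point $y \in Y$. I would then compute $(\Omega f)^{-1}(F_y) = \{ U \in \Omega X : f^{-1}(\text{some } V \ni y) \}$; more precisely, $U \in (f^{-1})^{-1}(F_y)$ iff $f^{-1}(V) = U$ for some... no, rather one checks directly that $(f^{-1})^{-1}(F_y) = F_{f(y)} \cap \{\text{opens of } X\}$ is not quite right either, since $f^{-1}$ need not be surjective onto $\Omega X$. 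The correct statement is: for an open $U$ of $X$, we do not have $f^{-1}(F_y)$ as a principal-type object, so instead I argue abstractly. Since $f$ is a continuous map between $T_0$-spaces and $y$ is a point, $f^{-1}(F_y)$ is the preimage filter of a neighborhood filter; by general nonsense (or by \cref{l:pt(OM) upper bound 1}, since $F_y$ corresponds to a point of $\Omega Y$) $f^{-1}(F_y)$ is a completely prime filter of $\Omega X$, corresponding to the point $f(y)$ (using $T_0$ of $X$ to identify it uniquely). Now $f(y)$ is locally closed because $f$ is a locally closed map, so $F_{f(y)}$ is a slicing filter of $\Omega X$ by the easy direction of \cref{l: every slicing is neighborhood} (a neighborhood filter of a locally closed point is slicing — this is essentially \cref{c: td inclusion into at} applied to $\P X$, or can be checked by hand: if $f(y)$ is closed in the open set $W$, pick $W \cap (X \setminus \overline{\{f(y)\}}) \lessdot W$). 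Hence $\Omega f(F) = f^{-1}(F_y) = F_{f(y)}$ is slicing, so $\Omega f$ is a D-morphism.

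For the ``if'' direction, suppose $\Omega f$ is a D-morphism and let $x \in X$ be locally closed; I must show $f(x)$ is locally closed in $Y$. Since $X$ is $T_0$ and $x$ is locally closed, the neighborhood filter $F_x$ is a slicing filter of $\Omega X$ by the remark above. But $F_x$ need not be of the form $(\Omega f)^{-1}(\text{something})$, so I cannot directly apply the D-morphism hypothesis in this direction — this is the main obstacle, and the fix is to go the other way: consider $F_{f(x)} = \{ V \in \Omega Y : f(x) \in V \}$, a completely prime filter of $\Omega Y$. One checks that $(\Omega f)^{-1}(F_{f(x)}) = F_x$: indeed $V \in (f^{-1})^{-1}(F_{f(x)})$ iff $f^{-1}(V) \in F_{f(x)}$... no — iff $f^{-1}(V) = $ an element of $F_{f(x)}$, which is not how preimage of a filter works. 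Rather, $U \in (\Omega f)^{-1}(F_{f(x)})$ means $U \in \Omega X$ with $f^{-1}(V) = U$ for some... Let me restate: for a frame map $g : A \to B$ and a filter $G$ of $B$, $g^{-1}(G) = \{ a \in A : g(a) \in G \}$. So $(f^{-1})^{-1}(F_{f(x)}) = \{ V \in \Omega Y : f^{-1}(V) \in F_{f(x)} \} = \{ V \in \Omega Y : x \in f^{-1}(V) \} = \{ V : f(x) \in V \} = F_{f(x)}$. That shows $F_{f(x)}$ is a \emph{fixed} filter under nothing useful. The genuinely useful computation is the opposite: I want a slicing filter $F$ of $\Omega Y$ with $(\Omega f)^{-1}(F) = F_x$. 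Take $F := F_{f(x)}$ if it happens to be slicing — but that's what we're trying to prove. So instead: since $x$ is locally closed, $F_x$ is slicing, hence completely prime, hence corresponds to a point, namely (by $T_0$) to $x$ via $\theta$; by \cref{l:pt(OM) upper bound 1} and naturality, $f^{-1}$ carries the point-filter at $f(x)$ to... Here I would instead invoke that $F_{f(x)}$ is completely prime and that $(\Omega f)^{-1}(F_{f(x)}) = F_x$ is slicing, then use \emph{reflectivity}: $\Omega f$ being a D-morphism says preimages of slicing filters are slicing, which goes $\Omega Y \to \Omega X$, the wrong way. The resolution — and this is the crux — is that to show $f(x)$ locally closed it suffices to find \emph{some} slicing filter of $\Omega Y$ whose preimage is $F_x$; equivalently, use \cref{c: td inclusion into at}/\cref{lem: D morphism}'s MT counterpart. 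In practice the paper presumably argues: $F_{f(x)}$ is completely prime; if it were not slicing then $f(x)$ is not locally closed and $\Omega X$'s preimage $F_x$ would have to be non-slicing too by a direct covering argument, contradiction. I would fill this gap by the explicit computation that a covering pair witnessing slicing of $F_x$ pushes forward along $f$ (using continuity, $f^{-1}$ preserves the covering relation on the relevant elements since $f^{-1}$ preserves finite meets and arbitrary joins) to a covering pair witnessing slicing of $F_{f(x)}$. Thus $f(x)$ is locally closed, completing the proof.

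The step I expect to be genuinely delicate is the bookkeeping about which filter maps to which — keeping straight that $\Omega f = f^{-1}$ contravariantly sends the point-filter $F_x$ of $X$ to a point-filter of $Y$ at $f(x)$ only after correctly identifying $(\Omega f)^{-1}$ of a point-filter. Once the identity $(\Omega f)^{-1}(F_{f(x)}) = F_x$ (or its correct variant) is pinned down, both directions are short: locally-closed-map $\Rightarrow$ images of locally closed points are locally closed $\Rightarrow$ their neighborhood filters are slicing $\Rightarrow$ D-morphism; and conversely a D-morphism forces, for each locally closed $x$, the slicing filter $F_x$ to be the preimage of the slicing filter $F_{f(x)}$, forcing $f(x)$ locally closed via \cref{l: every slicing is neighborhood}.
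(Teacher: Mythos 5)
Your proposal has the variance of the D-morphism condition backwards, and this derails both directions. By \cref{def: D morphism}, a frame morphism $g : L \to M$ is a D-morphism when $g^{-1}(F)$ is slicing in $L$ for every slicing filter $F$ of the \emph{codomain} $M$. For $\Omega f = f^{-1} : \Omega Y \to \Omega X$ the codomain is $\Omega X$, so what must be checked is: for every slicing filter of $\Omega X$, its preimage under $\Omega f$ is a slicing filter of $\Omega Y$. You instead open the forward direction with ``let $F$ be a slicing filter of $\Omega Y$,'' write it as $F_y$ for $y \in Y$, and then form $F_{f(y)}$ --- which is undefined, since $f : X \to Y$. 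The entire proof rests on the single computation
\[
(\Omega f)^{-1}(F_x) \;=\; \{\, V \in \Omega Y \mid f^{-1}(V) \in F_x \,\} \;=\; \{\, V \in \Omega Y \mid f(x) \in V \,\} \;=\; F_{f(x)} \qquad (x \in X),
\]
which you in fact carry out correctly in the middle of your ``if'' direction but then mislabel (you call its input $F_{f(x)}$ rather than $F_x$) and dismiss as showing ``nothing useful.'' With the correct labelling, combined with \cref{l: every slicing is neighborhood} and its converse (the neighbourhood filter of a locally closed point is slicing --- which you do verify correctly in passing), both directions are immediate: if $f$ is locally closed, every slicing filter of $\Omega X$ is $F_x$ with $x$ locally closed, so its preimage $F_{f(x)}$ is the neighbourhood filter of the locally closed point $f(x)$, hence slicing; conversely, if $\Omega f$ is a D-morphism and $x$ is locally closed, then $F_x$ is slicing, so $(\Omega f)^{-1}(F_x) = F_{f(x)}$ is slicing, whence $f(x)$ is locally closed (using that $Y$ is $T_0$ to identify the point). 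This is exactly the paper's proof.

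Because of the reversed variance you perceive an obstacle in the converse direction that is not there (``$F_x$ need not be of the form $(\Omega f)^{-1}(\text{something})$'' --- irrelevant, since the hypothesis is applied \emph{to} $F_x$, not to something mapping onto it), and you patch it with the claim that a covering pair witnessing slicing of $F_x$ ``pushes forward'' because ``$f^{-1}$ preserves the covering relation.'' That claim is unjustified and false in general: a frame morphism need not preserve $\lessdot$, since there may be opens strictly between the two images that are not themselves images. No such argument is needed. As submitted, neither direction is complete or correct; the fix is purely a matter of getting the direction of the preimage right, after which the argument collapses to the one-line computation above.
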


\begin{proof}
    Suppose that $f:X\to Y$ is a locally closed map between $T_0$-spaces. Consider a slicing filter of $\Omega X$, which by \cref{l: every slicing is neighborhood} is of the form $F_x$ for some locally closed $x\in X$. Then $f(x)\in Y$ is locally closed. We have 
    \begin{align*}
    (\Omega f)^{-1}(F_x)&=\{U\in \Omega Y\mid \Omega f(U)\in F_x\}
=\{U\in \Omega Y\mid f^{-1}(U)\in F_x\}\\
    &=\{U\in \Omega Y\mid x\in f^{-1}(U)\}=F_{f(x)}.
    \end{align*}
    Since $f(x)$ is locally closed, $F_{f(x)}$ is slicing, and hence $\Omega f$ is a D-morphism. 
    
    For the converse, suppose that $\Omega f$ is a D-morphism. If $x\in X$ is locally closed, the same computation as above shows that $(\Omega f)^{-1}(F_x)=F_{f(x)}$, and because $\Omega f$ is a D-morphism, $F_{f(x)}$ is a slicing filter. Thus, $f(x)$ is locally closed by \cref{l: every slicing is neighborhood}, and hence $f$ is a locally closed map.
   \end{proof}

We next introduce D-morphisms for MT-algebras. 

\begin{samepage}
\begin{definition}\ \label{defn: MTD}
    \begin{enumerate}
        \item An MT-morphism $f$ is a \emph{D-morphism} if the left adjoint $f^*$ maps locally closed atoms to locally closed atoms. 
        \item Let $\MTD$ be the category of MT-algebras and D-morphisms. We also let $\SMTD$ be the full subcategory of $\MTD$ consisting of spatial MT-algebras and $\STOMTD$ the full subcategory of $\SMTD$ consisting of spatial $T_0$-algebras.
    \end{enumerate}
    \end{definition}
\end{samepage}

We point out that identity maps are D-morphisms and that the composition of two D-morphisms is a D-morphism, so $\MTD$ indeed forms a category. 
Also, note that $\STDMT$ is a full subcategory of $\STOMTD$ since each MT-morphism between $T_D$-algebras is automatically a D-morphism (because every atom in a $T_D$-algebra is locally closed). 
The following result holds for arbitrary (not only $T_0$) spaces.

\begin{lemma}\label{l: lc iff dmt}
A continous map $f:X\to Y$ is locally closed iff $f^{-1}:\mathcal{P}Y\to \mathcal{P}X$ is a D-morphism.    
\end{lemma}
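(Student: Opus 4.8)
The plan is to reduce the statement to \cref{lem: D morphism} via the MT-duality $(\P,\at)$ and the identification of the left adjoint of $f^{-1}:\P Y\to\P X$ with the forward image map $f$ (after the homeomorphisms $\varepsilon_X,\varepsilon_Y$). The key observation is that for a continuous map $f:X\to Y$, the MT-morphism $f^{-1}:\P Y\to\P X$ has left adjoint $\exists_f:\P X\to\P Y$ sending $A\mapsto f[A]$, and in particular the restriction $(f^{-1})^*:\at\P X\to\at\P Y$ is, modulo the homeomorphisms $\varepsilon_X(x)=\{x\}$ and $\varepsilon_Y(y)=\{y\}$, just $f$ itself (since $f[\{x\}]=\{f(x)\}$).

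First I would recall that, by definition, $f^{-1}:\P Y\to\P X$ is a D-morphism iff its left adjoint maps locally closed atoms to locally closed atoms. The atoms of $\P X$ are exactly the singletons $\{x\}$, and $\{x\}$ is locally closed in the MT-algebra $\P X$ precisely when $x$ is a locally closed point of $X$ (the locally closed elements of $\P X$ are the locally closed subsets, and a singleton is locally closed as a subset iff the point is locally closed). The same holds for $Y$. Hence the condition ``$(f^{-1})^*$ maps locally closed atoms to locally closed atoms'' translates, via $\varepsilon_X$ and $\varepsilon_Y$, exactly into ``$f$ maps locally closed points of $X$ to locally closed points of $Y$'', which is the definition of $f$ being a locally closed map (\cref{defn: TopLC}(1)).

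So the proof is essentially a dictionary translation: (i) identify the left adjoint of $f^{-1}$ with the image map $f[-]$ and its restriction to atoms with $f$; (ii) identify locally closed atoms of $\P X$ with locally closed points of $X$; (iii) conclude that the defining conditions of the two sides of the biconditional match. I would not need the $T_0$ hypothesis here (unlike \cref{lem: D morphism}), because atoms of a powerset algebra and their local closedness are detected purely set-theoretically, independently of separation axioms.

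The only mild subtlety — and the step I would be most careful about — is step (i): checking that the left adjoint of the complete boolean homomorphism $f^{-1}:\P Y\to\P X$ is indeed $A\mapsto f[A]$ and that its restriction to the atoms yields the continuous map $(f^{-1})^*:\at\P X\to\at\P Y$ used in \cref{defn: MTD}, agreeing with $f$ under $\varepsilon_X,\varepsilon_Y$. This is routine: the left adjoint of $f^{-1}$ sends $A$ to the least $B\subseteq Y$ with $A\subseteq f^{-1}(B)$, i.e.\ to $f[A]$; restricting to singletons gives $\{x\}\mapsto\{f(x)\}$, which under the identifications $\varepsilon_X,\varepsilon_Y$ is precisely $f$. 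Once this is in place, the equivalence is immediate from the observation in (ii) and (iii). Everything else is bookkeeping, so there is no real obstacle.
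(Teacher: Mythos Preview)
Your argument is correct and essentially identical to the paper's: compute that the left adjoint of $f^{-1}$ sends $\{x\}$ to $\{f(x)\}$, then use that a point is locally closed iff its singleton is a locally closed element of the powerset MT-algebra. The only cosmetic issue is your opening sentence about ``reducing to \cref{lem: D morphism}'', which you never actually do (and, as you yourself note, cannot do without a $T_0$ hypothesis); the direct translation you give afterwards is exactly the paper's route.
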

\begin{proof}
For each $x\in X$, we have 
\begin{align*}
   (f^{-1})^*(\{x\})&=\bigcap \{S\in \mathcal{P}Y\mid \{x\}\subseteq f^{-1}(S)\}=\bigcap \{S\in \mathcal{P}Y\mid f(x)\in S\}=\{f(x)\}. 
\end{align*}
The result follows, since a point in a space is locally closed iff the corresponding singleton is a locally closed element in the MT-algebra of all subsets. 
\end{proof}
 
\begin{remark} \label{e: very simple nonT0}
    The above result is no longer true if we replace the functor $\P$ with $\Omega$: consider the inclusion $\{0\}\se \{0,1\}$ where both sets are given the trivial topology. The dualization of this map is the identity on the two-element frame, which is a D-morphism. But $\{0\}$ is locally closed in $\{0\}$, and not in $\{0,1\}$. Of course, by \cref{lem: D morphism}, it does remain true for $\Omega$ if the spaces under consideration are $T_0$. 
\end{remark}

As an immediate consequence of Theorems~\ref{MT duality}, \ref{thm: MT duality for T0 and TD 1}, and the above lemma we obtain: 

\begin{theorem}
$\TopLC$ is equivalent to $\SMTD^{\mathrm{op}}$, and $\ToTopLC$ is equivalent to $\STOMTD^{\mathrm{op}}$. 
\end{theorem}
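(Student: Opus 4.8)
The plan is to obtain both equivalences by restricting the equivalences of \cref{MT duality,thm: MT duality for T0 and TD 1} along the identification of morphism classes furnished by \cref{l: lc iff dmt}. Recall that $\P:\Top\to\MT^{\mathrm{op}}$ restricts to an equivalence $\Top\to\SMT^{\mathrm{op}}$ with quasi-inverse $\at$ and with unit $\varepsilon$ and counit $\eta$ natural isomorphisms; the goal is to cut this equivalence down to $\TopLC$ on the left and $\SMTD^{\mathrm{op}}$ on the right, and then further to their $T_0$ full subcategories.

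First I would observe that $\TopLC$ is a wide subcategory of $\Top$ and $\SMTD^{\mathrm{op}}$ is a wide subcategory of $\SMT^{\mathrm{op}}$ (since $\MTD$ is wide in $\MT$ and $\SMTD$ picks out the same spatial objects as $\SMT$). The essential input is the biconditional of \cref{l: lc iff dmt}: a continuous map $f:X\to Y$ lies in $\TopLC$ iff $\P f=f^{-1}$ lies in $\SMTD^{\mathrm{op}}$. The forward direction shows $\P$ sends $\TopLC$ into $\SMTD^{\mathrm{op}}$. The restricted functor is faithful, being a restriction of the faithful $\P$; it is full onto $\SMTD^{\mathrm{op}}$, because a D-morphism $g:\P Y\to\P X$ equals $f^{-1}$ for some continuous $f$ by fullness of $\P:\Top\to\SMT^{\mathrm{op}}$, and the reverse direction of \cref{l: lc iff dmt} forces $f\in\TopLC$; and it is essentially surjective, because for a spatial MT-algebra $M$ the isomorphism $\eta_M:M\to\P\at M$ strictly commutes with $\square$ and $\diamond$ (its inverse is also an MT-morphism), hence it and $\eta_M^{-1}$ preserve locally closed atoms, so $\eta_M$ is an isomorphism in $\SMTD$ and $M\cong\P(\at M)$ in $\SMTD^{\mathrm{op}}$ with $\at M\in\TopLC$. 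Being fully faithful and essentially surjective, the restriction is an equivalence (with the restricted $\at$ as quasi-inverse and the restricted $\varepsilon,\eta$ as witnessing natural isomorphisms), which is the first assertion.

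For the second assertion I would pass to full subcategories. By \cref{thm: MT duality for T0 and TD 1} and the fact that $\varepsilon_X:X\to\at\P X$ is a homeomorphism, a space $X$ is $T_0$ exactly when the (automatically spatial) MT-algebra $\P X$ is a $T_0$-algebra. Since $\ToTopLC$ is the full subcategory of $\TopLC$ on $T_0$-spaces and $\STOMTD^{\mathrm{op}}$ is the full subcategory of $\SMTD^{\mathrm{op}}$ on spatial $T_0$-algebras, and an equivalence of categories restricts to an equivalence between full subcategories whose objects correspond, the equivalence $\P:\TopLC\to\SMTD^{\mathrm{op}}$ restricts to an equivalence $\ToTopLC\to\STOMTD^{\mathrm{op}}$.

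The point needing the most care — bookkeeping rather than a genuine obstacle — is keeping the dualities straight: \cref{l: lc iff dmt} must be used in both directions (so $\P$ both lands in, and is full onto, the D-morphism subcategory), the locally-closed refinement is a passage to \emph{wide} subcategories on both sides, while the $T_0$-refinement is a passage to \emph{full} subcategories; once these are noted, the theorem follows directly from \cref{MT duality,thm: MT duality for T0 and TD 1} and \cref{l: lc iff dmt}.
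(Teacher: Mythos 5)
Your proposal is correct and is precisely the argument the paper intends: the paper states the theorem as an "immediate consequence" of \cref{MT duality}, \cref{thm: MT duality for T0 and TD 1}, and \cref{l: lc iff dmt}, and your write-up simply spells out the routine verification (wide-subcategory restriction via both directions of \cref{l: lc iff dmt}, then full-subcategory restriction for the $T_0$ case) that the paper leaves implicit.
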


The above equivalences further restrict to give the equivalence of \cref{thm: MT duality for T0 and TD 2}. We thus arrive at the following commutative diagram: 

\[\begin{tikzcd}[row sep=scriptsize, column sep=scriptsize]
	\TopLC && \SMTD^{\mathrm{op}} \\
	\\
	\ToTopLC && \STOMTD^{\mathrm{op}} \\
	\\
	\TDTop && \STDMT^{\mathrm{op}}
	\arrow[color={red}, leftrightarrow, from=1-3, to=1-1]
   \arrow[color={red}, leftrightarrow,from=3-1, to=3-3]
    \arrow[color={red}, leftrightarrow, from=5-1, to=5-3]
   \arrow[hook, from=3-3, to=1-3]
	\arrow[hook, from=5-1, to=3-1]
	\arrow[hook, from=5-3, to=3-3]
    \arrow[hook, from=3-1, to=1-1]
	\end{tikzcd}\]

 We next study the relationship between D-morphisms of MT-algebras and D-morphisms of frames. Recalling \cref{thm: O functor}, we have:

\begin{lemma}\label{l: preimageofneighborhoodfilter}
    For a complete boolean homomorphism $f:M\to N$ between MT-algebras, we have
    \[
    (\O f)^{-1}(\uparrow x\cap \O N)={\uparrow} f^*(x)\cap \O M,
    \]
    for all atoms $x\in N$.
\end{lemma}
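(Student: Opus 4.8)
The plan is to unwind both sides of the claimed equality directly from the definitions, using the adjunction $f^* \dashv f$ on the underlying boolean algebras. Recall that for a complete boolean homomorphism $f : M \to N$, the left adjoint $f^*$ is characterized by $f^*(x) \le m \iff x \le f(m)$ for all $x \in N$, $m \in M$. Also recall that $\O f = f|_{\O M}$ is the restriction of $f$ to open elements (by \cref{thm: O functor}), so $(\O f)^{-1}(S) = \{ u \in \O M \mid f(u) \in S \}$ for $S \subseteq \O N$.

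First I would fix an atom $x \in N$ and an open element $u \in \O M$, and chase the equivalences:
\[
u \in (\O f)^{-1}({\uparrow} x \cap \O N) \iff f(u) \in {\uparrow} x \iff x \le f(u) \iff f^*(x) \le u \iff u \in {\uparrow} f^*(x) \cap \O M,
\]
where the middle equivalence is exactly the adjunction $f^* \dashv f$. This establishes the set equality. Note that we do not need $x$ to be an atom or $f$ to be an MT-morphism for this string of equivalences; those hypotheses are only relevant in that they guarantee $f^*(x)$ is a single element (rather than needing the full left adjoint), and in how the lemma will be applied downstream. So the proof is essentially a one-line adjunction computation.

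The one point that deserves care, and which I expect to be the only real obstacle, is making sure that the two notions of "preimage" are being applied consistently: on the left, $(\O f)^{-1}$ refers to the preimage under the frame morphism $\O f : \O M \to \O N$, so the result automatically lands in $\O M$; on the right, ${\uparrow} f^*(x) \cap \O M$ is also a subset of $\O M$. Since $\O f$ is literally the restriction of $f$, the condition $\O f(u) \in {\uparrow} x$ is the same as $f(u) \in {\uparrow} x$, i.e. $x \le f(u)$, so there is no mismatch. I would write the proof as the displayed chain of iff's above, preceded by one sentence recalling that $f^*$ is the left adjoint of $f$ and that $\O f = f|_{\O M}$, and followed by the remark that the argument holds verbatim for any $x \in N$ (the atom hypothesis being immaterial to this particular equality).
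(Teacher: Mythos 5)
Your proof is correct and is essentially identical to the paper's own argument: both reduce the claim to the adjunction $f^*(x) \le u \iff x \le f(u)$ and unwind the set memberships on each side. Your side remark that the atom hypothesis is immaterial here is also accurate.
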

\begin{proof}
  By the adjointness property, we have that $x\leq f(a)$ iff $f^*(x)\leq a$ for each $a\in M$. This, by definition, means that for each $a\in\O M$, 
  \[
  a\in (\O f)^{-1}(\uparrow x\cap \O N) \iff x \le f(a) \iff f^*(x) \le a \iff a\in {\uparrow} f^*(x)\cap \O M. \qedhere
  \] 
 \end{proof}

\begin{proposition}\ \label{l: MT between TD is Dmor}
\begin{enumerate}[ref=\theproposition(\arabic*)]
    \item An MT-morphism $f:M\to N$ between $T_0$-algebras is a D-morphism iff $\O f$ is a D-morphism. \label [lemma]{Of is D}
    \item Any MT-morphism $f:M\to N$ between $T_D$-algebras is a D-morphism.\label [lemma]{f is D}
\end{enumerate}
\end{proposition}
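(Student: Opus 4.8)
The plan is to reduce both parts to the spectral description of slicing filters supplied by \cref{c: td inclusion into at}: for a $T_0$-algebra $M$, the homeomorphism $\theta':\at_D M\to\pt_D\O M$ identifies the slicing filters of $\O M$ with the sets $\upset x\cap\O M$ for $x$ a locally closed atom of $M$. The second ingredient is \cref{l: preimageofneighborhoodfilter}: for any atom $y$ of $N$ we have $(\O f)^{-1}(\upset y\cap\O N)=\upset f^*(y)\cap\O M$. Finally, I will use that the left adjoint $f^*$ of an MT-morphism restricts to a map $\at N\to\at M$: indeed $f^*(y)\ne 0$ for an atom $y$ since $y\not\le f(0)$, and if $a<f^*(y)$ then $f^*(y)\not\le a$, so $y\not\le f(a)$, hence $y\le\neg f(a)=f(\neg a)$ as $y$ is an atom, giving $f^*(y)\le\neg a$; together with $a\le f^*(y)$ this forces $a=0$.

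For part~(1), first assume $f:M\to N$ is a D-morphism of MT-algebras and let $F$ be a slicing filter of $\O N$. By \cref{c: td inclusion into at} we may write $F=\upset y\cap\O N$ with $y\in\at_D N$; then $f^*(y)\in\at_D M$ by hypothesis, and \cref{l: preimageofneighborhoodfilter} yields $(\O f)^{-1}(F)=\upset f^*(y)\cap\O M$, which is slicing by \cref{c: td inclusion into at}. Hence $\O f$ is a D-morphism. Conversely, assume $\O f$ is a D-morphism and fix $y\in\at_D N$. Then $\upset y\cap\O N$ is slicing, so $(\O f)^{-1}(\upset y\cap\O N)=\upset f^*(y)\cap\O M$ is slicing, and by \cref{c: td inclusion into at} it equals $\upset z\cap\O M$ for some $z\in\at_D M$. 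The one delicate point --- and the step I expect to be the main obstacle --- is to conclude that $f^*(y)$ \emph{itself} is locally closed, rather than merely an atom of $M$ with $\upset f^*(y)\cap\O M=\upset z\cap\O M$. This is precisely where $T_0$-ness of $M$ is needed: by \cref{l:pt(OM) upper bound 2} the map $\theta:\at M\to\pt\O M$, $x\mapsto\upset x\cap\O M$, is injective, so from $\theta(f^*(y))=\theta(z)$ we get $f^*(y)=z\in\at_D M$. Thus $f^*$ carries locally closed atoms to locally closed atoms, i.e.\ $f$ is a D-morphism.

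Part~(2) is then immediate, and does not even use part~(1): in a $T_D$-algebra every atom is locally closed, since for an atom $x$ the identity $x=\bigvee\{c\in\LC M\mid c\le x\}$ (which holds because $\LC M$ join-generates $M$) forces $x\in\LC M$, the right-hand join being $0$ otherwise. Hence for an MT-morphism $f:M\to N$ between $T_D$-algebras, $f^*$ sends every atom of $N$ --- in particular every locally closed atom --- to an atom of $M$, which is automatically locally closed; so $f$ is a D-morphism.
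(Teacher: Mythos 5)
Your proof is correct and follows essentially the same route as the paper's: both directions of (1) are reduced to \cref{c: td inclusion into at} together with \cref{l: preimageofneighborhoodfilter}, and (2) follows from the observation that every atom of a $T_D$-algebra is locally closed. The ``delicate point'' you isolate --- passing from $\upset f^*(y)\cap\O M$ being slicing to $f^*(y)$ itself being locally closed via injectivity of $\theta$ on a $T_0$-algebra (\cref{l:pt(OM) upper bound 2}) --- is precisely the step the paper handles implicitly by ``reapplying'' \cref{c: td inclusion into at}, so your making it explicit is a clarification rather than a deviation.
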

\begin{proof}
(1) Let $f:M\to N$ be an MT-morphism between $T_0$-algebras. First suppose that $\O f$ is a D-morphism, and that $x\in N$ is a locally closed atom. By \cref{c: td inclusion into at}, ${\uparrow} x\cap \O M$ is a slicing filter, and hence so is  ${\uparrow} f^*(x)\cap \O M$ by \cref{l: preimageofneighborhoodfilter}. Thus, $f^*(x)$ is locally closed by reapplying \cref{c: td inclusion into at}. 

Next suppose that $f$ is a D-morphism. Let $F\subseteq \O N$ be a slicing filter. By \cref{c: td inclusion into at}, $F = {\uparrow} x\cap \O N$ for some atom $x\in M$, which is locally closed by \cref{c: td inclusion into at}. By assumption, $f^*(x)\in M$ is a locally closed atom. By \cref{c: td inclusion into at}, ${\uparrow} f^*(x)\cap \O M$ is a slicing filter, and hence so is $(\O f)^{-1}({\uparrow} x\cap \O N)$ by \cref{l: preimageofneighborhoodfilter}. Thus, $\O f$ is a D-morphism.

(2) This follows immediately from the fact that in a $T_D$-algebra all atoms are locally closed.     
\end{proof}

By \cite[Sec.~3.7.2]{banaschewskitd}, $T_D$-spaces are a coreflective subcategory of the category of $T_0$-spaces with locally closed maps. The $T_D$-coreflection of a $T_0$-space $X$ is $\pt_D \Omega X \se X$, which up to homeomorphism is the inclusion of locally closed points of $X$ into it. 
We next define a pointfree analogue of this construction, without restricting to $T_0$ objects on either side. 

Define $\chi_M:M\to \mathcal{P}\at_D M$ by 
$$\chi_M(a)=\{x\in \at_D M\mid x\leq a\}.$$ 
Note that $\chi_M(\bigvee_i a_i)=\bigcup_i \chi_M(a_i)$, and so 
$\chi_M[\O M]$ is a subframe of $\P \at_D M$. We will regard $\mathcal{P}\at_D M$ as an MT-algebra whose opens are precisely this subframe. Thus, 
$\chi_M:M\to \mathcal{P}\at_D M$ is an MT-morphism onto a spatial $T_D$-algebra.

\begin{lemma}\label{lem:sigma is mt and D}
   The map $\chi_M:M\to \mathcal{P}\at_D M$ is a D-morphism.
\end{lemma}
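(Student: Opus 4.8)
The plan is to verify directly that the MT-morphism $\chi_M : M \to \mathcal P\at_D M$ satisfies the defining property of a D-morphism, namely that its left adjoint $\chi_M^*$ sends locally closed atoms of $\P\at_D M$ to locally closed atoms of $M$. First I would identify the left adjoint explicitly: since $\chi_M$ is a complete boolean homomorphism, $\chi_M^*$ exists, and by the usual adjointness computation (as in \cref{l: lc iff dmt}) one gets $\chi_M^*(\{x\}) = \bigwedge \{ a \in M \mid \{x\} \subseteq \chi_M(a)\} = \bigwedge\{a \in M \mid x \le a\} = x$ for each $x \in \at_D M$; that is, $\chi_M^*$ restricted to atoms is essentially the inclusion $\at_D M \hookrightarrow M$ (more precisely, it sends the atom $\{x\}$ of $\P\at_D M$ to the atom $x$ of $M$).

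The atoms of $\P\at_D M$ are exactly the singletons $\{x\}$ with $x \in \at_D M$, and each such $x$ is by definition a locally closed atom of $M$. So every atom of $\P\at_D M$ is locally closed in $\P\at_D M$ (being a spatial $T_D$-algebra, as already noted before the lemma), and its image $\chi_M^*(\{x\}) = x$ is a locally closed atom of $M$ by the very definition of $\at_D M$. Hence $\chi_M^*$ maps locally closed atoms to locally closed atoms, which is precisely the condition in \cref{defn: MTD}(1). Combined with the already-established fact (stated just before the lemma) that $\chi_M$ is an MT-morphism onto the spatial $T_D$-algebra $\P\at_D M$, this shows $\chi_M$ is a D-morphism.

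I do not anticipate a serious obstacle here; the only point requiring a little care is the identification of $\chi_M^*$ on atoms and, relatedly, making sure the atoms of $\P\at_D M$ are recognized as locally closed \emph{in} $\P\at_D M$ — but this is immediate since $\P\at_D M$ is a spatial $T_D$-algebra by construction, so all of its atoms are locally closed. Everything else is a direct unwinding of the definitions of $\chi_M$, of the left adjoint, and of $\at_D M$.
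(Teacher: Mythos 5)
Your proof is correct and follows essentially the same route as the paper: compute that $\chi_M^*(\{x\})=x$ for each $x\in\at_D M$, observe that the atoms of $\mathcal{P}\at_D M$ are the singletons, and conclude that $\chi_M^*$ sends (locally closed) atoms to locally closed atoms of $M$ by the very definition of $\at_D M$. The extra care you take in unwinding the adjoint is fine but not needed beyond what the paper records.
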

\begin{proof}
    As observed above, the map is an MT-morphism. The atoms of $\mathcal{P}\at_D M$ are the singletons. For $x\in \at_D M$, we have $\chi_M^*(\{x\})=x$, which is locally closed. Thus, $\chi_M$ is an $\MTD$-morphism. 
    \end{proof}

\begin{theorem}\label{t: sigmareflection}
The category $\STDMT$ is a full reflective subcategory of $\MTD$. 
\end{theorem}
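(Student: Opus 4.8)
The plan is to exhibit $\chi_M : M \to \mathcal{P}\at_D M$ as the reflection arrow. We already know from the discussion preceding the theorem that $\mathcal{P}\at_D M$ is a spatial $T_D$-algebra and, by \cref{lem:sigma is mt and D}, that $\chi_M$ is a $\MTD$-morphism. The full subcategory $\STDMT$ being full in $\MTD$ is clear since $\STDMT$ is a full subcategory of $\MT$ and $\STDMT$-morphisms are automatically D-morphisms by \cref{f is D}. So the substance is the universal property: given a spatial $T_D$-algebra $N$ and a D-morphism $f : M \to N$, I must produce a unique D-morphism (equivalently, by \cref{f is D}, a unique MT-morphism) $\bar f : \mathcal{P}\at_D M \to N$ with $\bar f \circ \chi_M = f$.

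First I would set up the dualization. By MT-duality (\cref{MT duality}), $N$ spatial means $N \cong \mathcal{P}\at N$, and since $N$ is $T_D$, every atom of $N$ is locally closed, so $\at N = \at_D N$. The D-morphism $f$ has a left adjoint $f^* : \at N \to \at M$ which is continuous and, because $f$ is a D-morphism, maps into $\at_D M$; thus $f^*$ restricts to a continuous map $g := f^* : \at_D N = \at N \to \at_D M$. Dualizing $g$ via the powerset functor gives an MT-morphism $\mathcal{P} g : \mathcal{P}\at_D M \to \mathcal{P}\at_D N \cong N$; call it $\bar f$. The key computation is that $\bar f \circ \chi_M = f$: unwinding the definitions, for $a \in M$ one has $\bar f(\chi_M(a)) = (\mathcal{P}g)(\{x \in \at_D M \mid x \le a\}) = \{y \in \at N \mid g(y) \le a\} = \{y \in \at N \mid f^*(y) \le a\} = \{y \in \at N \mid y \le f(a)\} = \eta_N(f(a))$, which under the identification $N \cong \mathcal{P}\at N$ is exactly $f(a)$. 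Here the adjointness $f^*(y) \le a \iff y \le f(a)$ is the crucial step, and I should check the opens match so that $\bar f$ is genuinely an MT-morphism (it is, since $\chi_M[\O M]$ generates the opens of $\mathcal{P}\at_D M$ and $\bar f$ agrees with $f$ on them).

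For uniqueness, suppose $\bar f' : \mathcal{P}\at_D M \to N$ is another D-morphism with $\bar f' \circ \chi_M = f$. Since $\mathcal{P}\at_D M$ is spatial, it is join-generated by its atoms (the singletons), and each singleton $\{x\}$ is locally closed, hence of the form $\chi_M(b) \wedge \neg\chi_M(c)$ for suitable $b \in \O M$ and $c$ (using that $x \in \at_D M$ means $x = u \wedge \Diamond x$ in $M$ and that $\chi_M$ preserves this structure on locally closed elements). Because any D-morphism is in particular an MT-morphism, it is a complete boolean homomorphism, so its values on all of $\mathcal{P}\at_D M$ are determined by its values on $\{\chi_M(b) \mid b \in \O M\}$ together with boolean and arbitrary-join operations; since $\bar f$ and $\bar f'$ agree with $f$ there, they agree everywhere.

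The main obstacle I anticipate is bookkeeping around the two different "opens" on $\mathcal{P}\at_D M$: it carries the topology $\eta$ from being a powerset, but we have deliberately declared its frame of opens to be $\chi_M[\O M]$ instead, and I must make sure that $\bar f = \mathcal{P}g$, which is an MT-morphism for the $\eta$-topology, remains an MT-morphism for this coarser declared topology — which it does, precisely because $\bar f(\chi_M(u)) = f(u) \in \O N$ for $u \in \O M$. A secondary subtlety is confirming that $f^*$ really does land in $\at_D M$ (this is the definition of D-morphism) and that the resulting $g$ is continuous into $\at_D M$ with its subspace topology from $\at M$; both follow from standard properties recorded in \cref{defn: MTD} and the preceding subsection.
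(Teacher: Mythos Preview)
Your proof is correct and reaches the same conclusion, but the route differs from the paper's. The paper defines the factoring map directly by the formula $\widehat{f}(S)=\bigvee\{f(x)\mid x\in S\}$ and then checks by hand that it is an MT-morphism and that $\widehat{f}\circ\chi_M=f$ (the key inequality uses spatiality of $N$ together with the D-morphism condition on $f$). You instead dualize: using that $N\cong\mathcal{P}\at N$ and that the left adjoint $f^*$ lands in $\at_D M$, you obtain a continuous $g:\at N\to\at_D M$ and set $\bar f=\mathcal{P}g$; the triangle then follows from adjointness. The two constructions give literally the same map (for $S\subseteq\at_D M$ one has $g^{-1}(S)=\{y\in\at N\mid f^*(y)\in S\}$, and under $\eta_N$ this corresponds to $\bigvee\{f(x)\mid x\in S\}$), so the difference is one of packaging. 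Your approach is more conceptual and anticipates the subsequent material (\cref{l: dualize sigma} and the $T_D$-coreflection on the topological side); the paper's is more self-contained and avoids leaning on MT-duality.

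Two minor remarks. First, your uniqueness argument is more elaborate than necessary: since $\chi_M$ is \emph{onto} (indeed $\chi_M(\bigvee S)=S$ for every $S\subseteq\at_D M$, as atoms below a join of atoms must be among them), any two maps agreeing after $\chi_M$ agree everywhere. Second, your worry about ``two different opens'' on $\mathcal{P}\at_D M$ is unfounded: the declared frame $\chi_M[\O M]$ is exactly the subspace topology inherited from $\at M$, so there is only one MT-structure in play and $\mathcal{P}g$ is automatically an MT-morphism once $g$ is continuous.
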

\begin{proof}
The subcategory is full by \cref{f is D}. For any MT-algebra $M$, by \cref{lem:sigma is mt and D}, the map $\chi_M:M\to \mathcal{P}\at_D M$ is a D-morphism onto a spatial $T_D$-algebra. Suppose that $f:M\to N$ is a D-morphism with $N$ a spatial $T_D$-algebra. Define $\widehat{f}:\mathcal{P}\at_D M\to N$ by $\widehat{f}(S)=\bigvee \{f(x)\mid x\in S\}$. We show that the following diagram commutes: 
\[
\begin{tikzcd}
    M
    \ar[d,"f",swap]
    \ar[rr,"\chi_M"]
    && \mathcal{P}\at_D M
    \ar[dll,"\widehat{f}"]\\
    N
\end{tikzcd}
\]
For $a\in M$, 
$\widehat{f}(\chi_M(a))=\bigvee \{f(x)\mid x\in \at_D M, \, x\leq a\}$. It is clear, then, that $\widehat{f}(\chi_M(a))\leq f(a)$. 
For the other inequality, since $N$ is spatial it suffices to show that $y\leq f(a)$ implies $y\leq \widehat{f}(\chi_M(a))$ for all $y\in \at N$. 
If $y\leq f(a)$, then $f^*(y)\leq a$. By assumption on $f$, 
$f^*(y)\in \at_D M$. Therefore, \[y\leq f(f^*(y))\leq \bigvee \{f(x)\mid x\in \at_D M, \, x\leq a\}.\] Thus, 
$y\le\widehat{f}(\chi_M(a))$, as desired. 
Finally, we show that $\widehat{f}$ is an MT-morphism: 
\begin{align*}
    \widehat{f}\left(\bigcup_i S_i\right)&=\bigvee \{f(x)\mid x\in \bigcup_i S_i\}=\bigvee \bigvee_i \{f(x)\mid x\in S_i\}=\bigvee_i\bigvee \{f(x)\mid x\in S_i\}=\bigvee_i \widehat{f}(S_i);
\end{align*}
it is left to see that $\widehat{f}$ maps opens to opens. However, the commutativity of the diagram gives $\widehat{f}(\chi_M(a))=f(a)$ for $a\in \O M$, and the result follows.
\end{proof}

We conclude this subsection by showing that the coreflection in \cite[Sec.~3.7.2]{banaschewskitd} may be obtained as the dualization of the above reflection. 

\begin{definition}
    For a topological space $X$, let $X_D$ be the subspace of $X$ consisting of locally closed points. 
\end{definition}

Since $x \in X$ is locally closed if and only if $\{x\}$ is locally closed in the MT-algebra $\mathcal{P} X$, we have a homeomorphism $h_D:X_D\cong\at_D \mathcal{P}X$ given by $h_D(x)=\{x\}$. 
From now on, we will identify these spaces.

\begin{lemma}\label{l: dualize sigma}
    The inclusion $i_D:X_D\subseteq X$ is such that $\mathcal{P}(i_D)=\chi_{\mathcal{P}X}$.
\end{lemma}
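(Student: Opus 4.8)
The plan is to unwind both sides of the claimed equality $\mathcal{P}(i_D) = \chi_{\mathcal{P}X}$ on an arbitrary open set $U \in \Omega X$ and check they agree as subsets of $X_D$ (identified with $\at_D \mathcal{P}X$ via $h_D$). First I would recall that $\mathcal{P}(i_D)$ is, by definition of the functor $\mathcal{P}:\Top\to\MT^{\mathrm{op}}$, the preimage map $i_D^{-1}:\mathcal{P}X\to\mathcal{P}X_D$, so for $S\subseteq X$ we have $\mathcal{P}(i_D)(S) = i_D^{-1}(S) = S\cap X_D$. On the other side, $\chi_{\mathcal{P}X}:\mathcal{P}X\to\mathcal{P}\at_D\mathcal{P}X$ sends $S$ to $\{Z\in\at_D\mathcal{P}X \mid Z\subseteq S\}$, and since the atoms of $\mathcal{P}X$ are the singletons, the locally closed atoms are precisely the singletons $\{x\}$ with $x$ a locally closed point of $X$; thus $\chi_{\mathcal{P}X}(S) = \{\{x\} \mid x\in X \text{ locally closed}, \, x\in S\}$.

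Next I would invoke the identification $h_D:X_D\cong\at_D\mathcal{P}X$, $x\mapsto\{x\}$, which is exactly the homeomorphism noted just before the lemma. Under this identification, $\chi_{\mathcal{P}X}(S)$ corresponds to the set $\{x\in X_D \mid x\in S\} = S\cap X_D$, which is precisely $\mathcal{P}(i_D)(S)$. Since both maps are determined by their values on subsets of $X$ (indeed on all of $\mathcal{P}X$, not merely the opens), this establishes $\mathcal{P}(i_D) = \chi_{\mathcal{P}X}$ as maps $\mathcal{P}X\to\mathcal{P}X_D$. One should also remark that this is compatible with the MT-structures: $\mathcal{P}X_D$ is regarded as an MT-algebra whose opens are $\chi_{\mathcal{P}X}[\Omega X]$, which coincides with $\{U\cap X_D \mid U\in\Omega X\} = \Omega X_D$ (the subspace topology), so that $\mathcal{P}(i_D)$ and $\chi_{\mathcal{P}X}$ agree not just as functions but as MT-morphisms into the same MT-algebra.

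I do not anticipate a genuine obstacle here; this is essentially a bookkeeping lemma whose content is that the abstract construction $\chi$ restricts, in the spatial case, to the concrete subspace inclusion of locally closed points. The only point requiring a little care is making the identifications explicit: one must be careful that the MT-algebra $\mathcal{P}\at_D M$ from the paragraph preceding \cref{lem:sigma is mt and D} has its opens declared to be $\chi_M[\Omega M]$, and to check that under $h_D$ this matches the subspace topology on $X_D$ so that the target MT-algebras of $\mathcal{P}(i_D)$ and $\chi_{\mathcal{P}X}$ genuinely coincide. After that, the equality is immediate from $S\cap X_D = \{x\in X_D\mid x\in S\}$ together with the description of locally closed atoms as singletons of locally closed points.
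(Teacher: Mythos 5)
Your proof is correct and follows essentially the same route as the paper: unwind $\mathcal{P}(i_D)$ as $S\mapsto S\cap X_D$, unwind $\chi_{\mathcal{P}X}(S)$ as the set of locally closed singletons contained in $S$, and match them under the identification $h_D$. The extra remark verifying that the declared opens of $\mathcal{P}\at_D\mathcal{P}X$ agree with the subspace topology on $X_D$ is a worthwhile bit of care that the paper leaves implicit, but it does not change the argument.
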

\begin{proof}
Since $\mathcal{P}(i_D)={i_D}^{-1}$, $\P(i_D)(Y) = Y\cap X_D$ for each $Y\subseteq X$. Therefore, under the identification described above, 
\[
\P(i_D)(Y) = Y\cap X_D=\{\{x\}\in \at_D\P X\mid \{x\}\subseteq Y\}=\chi_{\P X}(Y). \qedhere
\]
\end{proof}

\begin{theorem}
The category $\TDTop$ is a full coreflective subcategory of $\TopLC$. The coreflection is given by the inclusion $X_D\se X$.
\end{theorem}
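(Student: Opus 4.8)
The plan is to show that $\TDTop$ is a full subcategory of $\TopLC$ and that the inclusion $X_D \subseteq X$ serves as the coreflection of each object $X$. First I would observe that fullness is already noted in the text: every continuous map between $T_D$-spaces is automatically locally closed, so $\TDTop$ sits as a full subcategory of $\TopLC$. (One should also recall that $X_D$ is itself a $T_D$-space: each point of $X_D$ is locally closed in $X$, hence locally closed in the subspace $X_D$.) So the substance is the universal property. Also, the inclusion $i_D : X_D \subseteq X$ is itself a locally closed map, since a point of $X_D$ is by definition locally closed in $X$, so $i_D$ is a morphism of $\TopLC$.

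Next I would set up the universal property: given a $T_D$-space $Z$ and a locally closed map $g : Z \to X$, I must produce a unique continuous map $\tilde g : Z \to X_D$ with $i_D \circ \tilde g = g$. Since $Z$ is a $T_D$-space, every point of $Z$ is locally closed in $Z$, and because $g$ is a locally closed map, $g(z)$ is locally closed in $X$ for every $z \in Z$; that is, $g(z) \in X_D$. Hence $g$ corestricts to a map $\tilde g : Z \to X_D$, and this is the only possible factorization through the inclusion, giving uniqueness immediately. Continuity of $\tilde g$ follows from the fact that $X_D$ carries the subspace topology: the preimage under $\tilde g$ of an open set $U \cap X_D$ of $X_D$ equals $g^{-1}(U)$, which is open in $Z$. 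Finally $\tilde g$ is a morphism of $\TopLC$ because $Z$ is a $T_D$-space, so $\tilde g$ is automatically locally closed (any continuous map out of a $T_D$-space is locally closed, as noted in the text right after \cref{defn: TopLC}).

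The only point requiring slightly more care is that $X_D$ really is a $T_D$-space, which is needed for the inclusion to land in $\TDTop$. This is where I would invoke the correspondence with MT-algebras if a direct topological argument is not immediate: by \cref{l: dualize sigma}, $\P(i_D) = \chi_{\P X} : \P X \to \P\at_D(\P X)$, and $\P\at_D(\P X)$ is, by construction (see the paragraph before \cref{lem:sigma is mt and D}), a spatial $T_D$-algebra; dualizing back via MT-duality, $\at(\P\at_D \P X) \cong \at_D \P X \cong X_D$ is then a $T_D$-space by \cref{thm: MT duality for T0 and TD 2}. Alternatively, and more simply, if $x \in X_D$ then $\{x\} = U \cap C$ with $U$ open and $C$ closed in $X$, so $\{x\} = (U \cap X_D) \cap (C \cap X_D)$ exhibits $x$ as locally closed in $X_D$; this is the cleanest route. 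I do not expect any genuine obstacle here: the whole statement is the topological shadow, under $\P$, of \cref{t: sigmareflection}, and indeed one could simply deduce it by applying MT-duality (\cref{MT duality} together with \cref{l: lc iff dmt} and \cref{l: dualize sigma}) to the reflection $\STDMT \hookrightarrow \MTD$, turning the reflective subcategory into a coreflective one. The main thing to get right is bookkeeping with the identifications $X_D \cong \at_D \P X$ and $\P(i_D) = \chi_{\P X}$.
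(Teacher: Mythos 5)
Your argument is correct, but it is genuinely different from the paper's. The paper proves this theorem by \emph{dualizing} the reflection of \cref{t: sigmareflection}: given a locally closed map $f:Y\to X$ with $Y$ a $T_D$-space, it passes to the $\MTD$-morphism $f^{-1}:\P X\to\P Y$ via \cref{l: lc iff dmt}, factors it through $\chi_{\P X}$ using \cref{t: sigmareflection}, and then translates back to spaces via \cref{l: dualize sigma} and \cref{thm: MT duality for T0 and TD 2} --- exactly the route you sketch at the end as an ``alternative.'' Your primary argument is instead a direct, self-contained topological verification of the universal property: $g(z)\in X_D$ for every $z$ because $Z$ is $T_D$ and $g$ is locally closed, the corestriction is continuous for the subspace topology and is the unique factorization, and $X_D$ is itself $T_D$ by the computation $\{x\}=(U\cap X_D)\cap(C\cap X_D)$. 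This is more elementary and arguably cleaner in isolation; what the paper's route buys is the conceptual point of that subsection, namely that the Banaschewski--Pultr $T_D$-coreflection is literally the image under MT-duality of the spatial $T_D$-reflection of $\MTD$, which a direct proof does not exhibit. One small correction: your parenthetical claim that \emph{any} continuous map out of a $T_D$-space is locally closed is false (the text only asserts this for maps \emph{between} $T_D$-spaces; cf.\ \cref{e: very simple nonT0}, where the inclusion of a point into an indiscrete doubleton is a counterexample). This does not damage your proof, since at that step the codomain $X_D$ is also $T_D$ --- indeed every point of $X_D$ is locally closed in $X_D$ --- so the correct statement applies; but the justification should read ``between $T_D$-spaces,'' which is why you need to have established that $X_D$ is $T_D$ before this step.
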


\begin{proof}
Let $X$ be a space. By definition, the inclusion $X_D\subseteq X$ is a locally closed map. Suppose that $Y$ is a $T_D$-space and $f:Y\to X$ is a locally closed map. By \cref{l: lc iff dmt}, $f^{-1}:\mathcal{P}X\to\mathcal{P}Y$ is a $\MTD$-morphism. By \cref{t: sigmareflection}, there is an $\MTD$-morphism $\widehat{f^{-1}}:\mathcal{P}\at_D\mathcal{P}X\to\mathcal{P}Y$ such that the diagram on the right commutes:
\begin{center}
\begin{minipage}{0.4\textwidth}
\[
\begin{tikzcd}
    X_D
    \ar[rr,"\subseteq"]
    && X\\
    Y
    \ar[urr,"f"]
    \ar[dashed,u,"\widehat{f}"]
\end{tikzcd}
\]
\end{minipage}
\begin{minipage}{0.4\textwidth}
\[
\begin{tikzcd}
    \mathcal{P}X
    \ar[d,"f^{-1}",swap]
    \ar[rr,"\chi_{\mathcal{P}X}"]
    && \mathcal{P}\at_D\mathcal{P}X
    \ar[dll,"\widehat{f^{-1}}"]\\
    \mathcal{P}Y
\end{tikzcd}
\]
\end{minipage}
\end{center}
By \cref{l: dualize sigma,thm: MT duality for T0 and TD 2}, there must be a locally closed map $\widehat{f}:Y\to X_D$ such that the diagram on the left commutes.
\end{proof}

The above theorem yields the $T_D$-coreflection of Banaschewski and Pultr. In \cite[3.7.2]{banaschewskitd} it was described as the embedding $\pt_D\Omega X\to X$ for every $T_0$-space $X$. One of the advantages of our approach is that we do not have to restrict to $T_0$-spaces.

\begin{remark}
As we saw above, the $T_D$-coreflection of a space is neatly captured by dualizing the spatial $T_D$-reflection of $\MTD$. By contrast, the frame setting is not expressive enough for this purpose. Indeed, we recall from \cite[3.7.2]{banaschewskitd} that the $T_D$-spatialization of a frame $L$ is given by 
\begin{align*}
    \sigma_L&:L\to \Omega \pt_D L,\\
    a&\mapsto \{P\in \pt_D L\mid a\in P\}.
\end{align*}
If we dualize $\sigma_L$ using $\pt_D$, we obtain a homeomorphism $\pt_D\Omega \pt_D L \to \pt_D L$, which is a trivial $T_D$-coreflection. On the other hand, if we dualize $\sigma_L$ using $\pt$,  we obtain $\pt \sigma_L:\pt\Omega\pt_D L\to \pt L$. Since $\pt_D L$ is the subspace of locally closed points of $\pt L$, this gives the inclusion of the soberification of $\pt_D L$ into $\pt L$, which is not the $T_D$-coreflection. In fact, the soberification of a $T_D$-space is $T_D$ only in the trivial case where the starting space is both sober and $T_D$. We will explore the interplay between soberification and the $T_D$ axiom in \cref{sec: duality for sober maps}.
\end{remark}

\section{Duality for spatial MT-algebras and proximity morphisms} \label{sec: duality for sober maps}

In this final section, we generalize the duality of \cref{MT duality}
between $\Top$ and $\SMT$ to incorporate proximity morphisms between spatial MT-algebras. This is done by introducing the notion of a sober map, a continuous map from one space to the soberification of another, and by showing that frame morphisms between spatial frames and their corresponding proximity morphisms between spatial MT-algebras are characterized by sober maps. As a corollary, we obtain the topological and MT analogues of the category of $T_D$-spatial frames and frame morphisms.

We begin by recalling that $\Sob$ is a reflective subcategory of $\Top$, and that the reflector $\s:\Top\to\Sob$ is given by the soberification $\pt \Omega$ (see, e.g., \cite[p.~44]{Joh1982}).
The unit $\lambda: 1_{\Top} \to \s$ 
is given by $\lambda_X(x)=F_x$ for each $X \in \Top$ and $x \in X$. 

\begin{definition}\label{defn: sober map}
    For topological spaces $X$ and $Y$, we call a continuous map $f:X\to\s Y$ a \emph{sober map} from $X$ to $Y$, and denote it by $f : X \leadsto Y$.
\end{definition}

If $f : X \leadsto Y$ and $g : Y \leadsto Z$ are two sober maps then their composition $g \bullet f : X \leadsto Z$ is given by $\lambda _{\s Z}^{-1} \circ \s g \circ f : X \to \s Z$, which is well defined since $\lambda_{\s Z}:\s Z\to\s\s Z$ is a homeomorphism. By identifying $\s Z$ with $\s\s Z$, the composition $g \bullet f$ can be described as $\s g \circ f$. By this identification, we have that $\s \lambda_X = \lambda_{\s X}$ is the identity on $\s X$. Consequently, since $\lambda$ is a natural transformation, for each $f : X \leadsto Y$, the following diagram commutes:
\[
\begin{tikzcd}[sep=4em]
    X \ar[r, "f"] \ar[d,"\lambda_X"'] & \s Y \ar[d,"\lambda_{\s Y}"]\\
    \s X \ar[r, "\s f"'] & \s Y = \s\s Y
\end{tikzcd}
\]
Therefore, $f \bullet \lambda_X = \s f \circ \lambda_X = \lambda_{\s Y} \circ f = f$. Similarly, for each $g : Y \leadsto X$, we have that $\lambda_X \bullet g = g$. We thus arrive at the following new category:

\begin{definition}\label{defn: TopS}
    Let $\TopS$ be the category of topological spaces and sober maps between them, where composition is given by $\bullet$ and identity morphisms are $\lambda_X$.
\end{definition}

\begin{remark}\label{isos in TopS}
    A sober map $f : X \leadsto Y$ is a $\TopS$-isomorphism iff $\s f : \s X \to \s Y$ is a homeomorphism. To see this, suppose there is $g : Y \leadsto X$ such that $g \bullet f = \lambda_X$ and $f \bullet g = \lambda_Y$, so $\s g \circ f = \lambda_X$ and $\s f \circ g = \lambda_Y$ (see the left diagram below). By applying $\s$ to the former, $\s(\s g \circ f) = \s \s g \circ \s f = \s \lambda_X$. Therefore, by identifying $\s = \s\s$, we have that $\s g \circ \s f$ is the identity on $\s X$. Similarly, $\s f \circ \s g$ is the identity on $\s Y$, and hence $\s f$ is a homeomorphism.
    \[\begin{tikzcd}[sep=4em]
        X & \s Y\\
        \s X & Y
        \ar[from=1-1, to=1-2, "f"]
        \ar[from=2-2, to=2-1, "g"]
        \ar[from=2-2, to=1-2, "\lambda_Y"']
        \ar[from=1-1, to=2-1, "\lambda_X"']
        \ar[from=1-2, to=2-1, "\s g", shift left]
        \ar[from=2-1, to=1-2, "\s f", shift left]
    \end{tikzcd}
    \qquad
    \begin{tikzcd}[sep=4em]
        X & \s Y\\
        \s X & Y
        \ar[from=1-1, to=1-2, "f"]
        \ar[from=2-2, to=1-2, "\lambda_Y"']
        \ar[from=1-1, to=2-1, "\lambda_X"']
        \ar[from=1-2, to=2-1, "g'", shift left]
        \ar[from=2-1, to=1-2, "\s f", shift left]
    \end{tikzcd}\]
    Conversely, suppose $\s f$ is a homeomorphism (see the right diagram, where we identify $\s = \s\s$). Then there is a continuous map $g' : \s Y \to \s X$ which is inverse to $\s f$. Let $g : Y \leadsto X$ be given by $g = g' \circ \lambda_Y$.  By identifying $\s = \s\s$, we have that $\s g' = g'$ and $ \s \lambda_Y$ is the identity. Therefore, since $g'$ is the inverse of $\s f$,
    \begin{align*}
    g \bullet f 
    = \s g \circ f 
    = \s g' \circ \s \lambda_Y \circ f
    = g' \circ f 
    = g' \circ \s f \circ \lambda_X 
    = \lambda_X.
    \end{align*}
    Similarly, $f \bullet g = \s f \circ g = \s f \circ g' \circ \lambda_Y = \lambda_Y$. Consequently, $g$ is the inverse of $f$, so $f$ is a $\TopS$-isomorphism.
\end{remark}

Our aim is to show that \TopS is dually equivalent to the full subcategory $\SMTP$ of $\MTP$ consisting of spatial MT-algebras. To define a functor from $\SMTP^{\mathrm{op}}$ to \TopS, we need the following lemma, where $\eta$ is the counit of MT-duality (see \cref{MT duality}).

\begin{lemma}\label{proximity to quasi}
    Suppose $f : M \to N$ is a proximity morphism between spatial MT-algebras. Define $\ats f : \at N \leadsto \at M$ by 
    \[
    \ats f(y) = \{ \eta_M(a) \mid a \in \O M, \, y \le f(a) \}
    \]
   for each $y\in\at N$. 
    Then $\ats$ is a sober map.
\end{lemma}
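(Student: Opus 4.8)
The plan is to verify that $\ats f$ lands in $\s(\at M) = \pt\,\O(\at M)$, i.e., that for each $y \in \at N$ the set $\ats f(y)$ is a completely prime filter of $\O(\at M) \cong \O M$ (using the identification coming from spatiality of $M$, namely $\eta_M : M \xrightarrow{\ \cong\ } \P\at M$), and then that the assignment $y \mapsto \ats f(y)$ is continuous from $\at N$ to $\s(\at M)$. Throughout I will freely identify $\O M$ with $\O(\at M)$ via $\eta_M$, so that $\ats f(y)$ may be regarded as a subset of $\O M$ consisting of those $a \in \O M$ with $y \le f(a)$.

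First I would check $\ats f(y)$ is a completely prime filter of $\O M$. It is a filter: it is upward closed in $\O M$ because $f|_{\O M}$ is monotone (a consequence of \ref{def: proximitymor 2}); it is closed under finite meets because $f|_{\O M}$ preserves finite meets by \ref{def: proximitymor 1}, so $y \le f(a)$ and $y \le f(b)$ give $y \le f(a)\wedge f(b) = f(a\wedge b)$; and it contains $1$ since $f(1) = 1$. It is completely prime: if $y \le f\big(\bigvee_i a_i\big)$ with $a_i \in \O M$, then by \ref{def: proximitymor 1}, $f\big(\bigvee_i a_i\big) = \bigvee_i f(a_i)$, and since $y$ is an atom of the complete boolean algebra $N$, $y \le \bigvee_i f(a_i)$ forces $y \le f(a_i)$ for some $i$. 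Finally it is proper: $f(0) = 0$ (from \ref{def: proximitymor 2} with $a = b = 0$, or from $f(0) = f(0\wedge 0) = f(0)\wedge f(0)$ together with complementation facts; more directly, $0 \notin \ats f(y)$ because $y \neq 0$ and $y \le f(0) = 0$ is impossible). Hence $\ats f(y) \in \pt\,\O M = \pt\,\O(\at M) = \s(\at M)$.

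Next I would establish continuity of $\ats f : \at N \to \s(\at M)$. The topology on $\s(\at M) = \pt\,\O(\at M)$ has basic opens $\sigma(a) = \{ P \in \pt\,\O M \mid a \in P\}$ for $a \in \O M$. Unwinding, $(\ats f)^{-1}(\sigma(a)) = \{ y \in \at N \mid a \in \ats f(y)\} = \{ y \in \at N \mid y \le f(a)\} = \eta_N(f(a))$. Since $f(a) \in \O N$ by \ref{def: proximitymor 1}, $\eta_N(f(a))$ is an open subset of $\at N$ by the description of the topology on $\at N$. Hence preimages of basic opens are open, so $\ats f$ is continuous, and therefore a sober map $\at N \leadsto \at M$. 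This completes the argument.

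**The main obstacle** I anticipate is purely bookkeeping rather than mathematical depth: being careful and consistent about the identifications. One must keep straight that $M$ and $N$ are spatial, so $\eta_M$, $\eta_N$ are isomorphisms of MT-algebras; that under these, $\O M \cong \O(\at M)$ as frames, so that "$\eta_M(a)$ for $a \in \O M$" is literally an element of $\O(\at M)$ and the displayed formula for $\ats f(y)$ is a subset of $\at M$; and that $\s(\at M) = \pt\,\O(\at M)$ with its hull-kernel topology. Once the identifications are pinned down, each verification above is a one-line use of one of (P1)–(P2) plus the fact that atoms of a complete boolean algebra are completely join-prime. The only subtlety worth a sentence is why $f(a) \in \O N$ whenever $a \in \O M$ — this is exactly \ref{def: proximitymor 1}, which says $f|_{\O M}$ corestricts to $\O N$ — since this is what makes both the filter property (complete primeness) and the continuity computation go through using only the frame-morphism behaviour of $f$ on opens.
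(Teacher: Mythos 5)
Your proof is correct, but it takes a different route from the paper. The paper factors $\ats f$ as a composite of three maps already known to be continuous: $\theta : \at N \to \pt\O N$ (from Lemma \ref{l:pt(OM) upper bound 1}), $\pt(f|_{\O M}) : \pt\O N \to \pt\O M$ (the point functor applied to the frame morphism $f|_{\O M}$), and the homeomorphism $\psi = \pt(\eta_M^{-1}) : \pt\O M \to \s\at M$ coming from spatiality of $M$; it then checks by a short computation that this composite is exactly $\ats f$. You instead verify everything by hand: that $\ats f(y)$ is a proper, completely prime filter of $\O M \cong \Omega(\at M)$ (using (P1), (P2), and the fact that atoms of a complete boolean algebra are completely join-prime, which holds since complete boolean algebras satisfy the join-infinite distributive law), and that preimages of basic opens are the open sets $\eta_N(f(a))$. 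This is essentially an unfolding of the paper's factorization — your filter verification re-proves what the citation of Lemma \ref{l:pt(OM) upper bound 1} and the functoriality of $\pt$ deliver for free — so your argument is more self-contained but less modular. The one thing the paper's approach buys that yours does not is the explicit identity $\ats f = \psi \circ \pt(f|_{\O M}) \circ \theta$, which is reused later (in the proof that $\ats$ preserves composition and in the computation labelled \eqref{eq:s theta psi}); with your direct proof one would have to establish that identity separately before proceeding.
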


\begin{proof}
    Since $f$ is a proximity morphism, its restriction $f|_{\O M} : \O M \to \O N$ is a frame morphism, so $\pt(f|_{\O M}) : \pt\O N \to \pt\O M$ is a continuous map, as is  {$\theta : \at N \to \pt\O N$}  by \cref{l:pt(OM) upper bound 1}.  Because $M$ is spatial, $\eta_M : \O M \to \Omega\at M$ is an isomorphism, so there is a homeomorphism $\psi := \pt(\eta_M^{-1}) : \pt\O M \to \pt\Omega\at M = \s\at M$. The composition
    \[\begin{tikzcd}[column sep=3.5em]
        \at N \ar[r, "\theta"] & \pt\O N \ar[r, "\pt(f|_{\O M})" ] & \pt\O M \ar[r, "\psi"] & \s\at M 
    \end{tikzcd}\]
   is clearly a sober map. But 
    \begin{align*}
        \psi \circ \pt(f|_{\O M}) \circ \theta(y)
        &= \psi \circ \pt(f|_{\O M})(\{b \in \O N \mid y \leq b\})\\
        &= \psi(\{a \in \O M \mid y \leq f(a) \})\\
        &= \{\eta_M(a) \mid a \in \O M, \, y \leq f(a) \}\\
        &= \ats f(y)
    \end{align*}
    for each $y\in\at N$, completing the proof.
   \end{proof}
   
   Note that the inverse of $\psi : \pt \O M \to \s \at M$ is given by $\s\theta$ (where $\theta:\at M  \to \pt \sfO M$ is defined in \cref{l:pt(OM) upper bound}). Indeed, recalling the counit $\sigma : 1_{\Frm} \to \Omega\circ\pt$ from \cref{frames and coframes}, for $x \in \pt\O M$,
\begin{equation*}
\tag{\ding{170}} \label{eq:s theta psi}
\begin{aligned}
    (\s \theta \circ \psi)(x) 
    &= \s\theta(\eta_M[x])\\
    &= \{U  \in  \Omega \pt \O M \mid \theta^{-1}(U) \in \eta_M[x]\}\\
    &= \{U  \in  \Omega \pt \O M \mid \exists u \in x : \theta^{-1}(U) = \eta_M(u)\}\\
    &= \{U  \in  \Omega \pt \O M \mid \exists u \in x : U = \sigma_{\O M}(u)\}\\
    &= \{U \in \Omega \pt \O M \mid x \in U\}\\
    &= \lambda_{\pt \O M}(x) = x,
\end{aligned}
\end{equation*}
where the last equality is true by identifying $\s = \s\s$ since $\pt \O M$ is sober. Moreover, for each $x \in \at M$,
\begin{align*}
 \psi \circ \theta (x)&=\psi(\upset x \cap \O M)\\
 &= \eta_M [\upset x \cap \O M]\\
 &=\{\eta_M(a) \mid a \in \O M, \, x \leq a\}\\
 &=\lambda_{\at M}.   
\end{align*}
Thus, for each $y \in \s \at M$, 
\[
y=\s(\lambda_{\at M}) (y)=\s(\psi \circ \theta)(y)= \psi \circ \s \theta (y).
\]

\begin{proposition}
     $\ats:{\SMTP}^{\mathrm{op}} \to \TopS$ is a functor. 
     \end{proposition}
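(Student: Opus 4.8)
By \cref{proximity to quasi} together with the definition of the topology on $\at M$, the assignment $\ats$ is already well defined on objects and on morphisms, so the plan is to verify the remaining two functoriality axioms, preservation of identities and of composition, by reducing both of them to the frame-theoretic factorization of $\ats f$ that is established inside the proof of \cref{proximity to quasi}. Recall from there that for a proximity morphism $f : M \to N$ between spatial MT-algebras one has
\[
\ats f = \psi_M \circ \pt(f|_{\O M}) \circ \theta_N ,
\]
where $\theta_N : \at N \to \pt \O N$ is the map of \cref{l:pt(OM) upper bound 1} and $\psi_M = \pt(\eta_M^{-1}) : \pt \O M \to \s \at M$ is the homeomorphism coming from spatiality of $M$; recall also the two identities proved in the displays immediately following that lemma, namely $\s\theta_M = \psi_M^{-1}$ (under the identification $\s\pt\O M = \pt\O M$) and $\psi_M \circ \theta_M = \lambda_{\at M}$. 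I would also record two elementary facts about restrictions to opens, both of which already appear in the proof that $\O : \MTP \to \Frm$ is a functor: first, $(1_M)|_{\O M} = 1_{\O M}$; second, for composable proximity morphisms $f$ and $g$ one has $(g \star f)|_{\O M} = (g|_{\O N}) \circ (f|_{\O M})$, since $g \star f$ and $g \circ f$ agree on $\O M$.

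For the identities, the computation is then immediate:
\[
\ats(1_M) = \psi_M \circ \pt\big((1_M)|_{\O M}\big) \circ \theta_M = \psi_M \circ \pt(1_{\O M}) \circ \theta_M = \psi_M \circ \theta_M = \lambda_{\at M},
\]
and $\lambda_{\at M}$ is by definition the identity morphism on $\at M$ in $\TopS$.

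For composition, let $f : M_1 \to M_2$ and $g : M_2 \to M_3$ be proximity morphisms; what has to be shown is that $\ats$ reverses composition, i.e.\ that $\ats(g \star f) = \ats f \bullet \ats g$ in $\TopS$, which by the description of $\bullet$ means $\ats(g \star f) = \s(\ats f) \circ \ats g$ under the identification $\s = \s\s$. I would first compute $\s(\ats f)$: applying the functor $\s$ to the factorization of $\ats f$ and invoking naturality of $\lambda$ together with the identifications $\s\pt\O M_i = \pt\O M_i$ and $\s\s\at M_1 = \s\at M_1$, one obtains $\s\psi_{M_1} = \psi_{M_1}$, $\s\pt(f|_{\O M_1}) = \pt(f|_{\O M_1})$, and $\s\theta_{M_2} = \psi_{M_2}^{-1}$, hence
\[
\s(\ats f) = \psi_{M_1} \circ \pt(f|_{\O M_1}) \circ \psi_{M_2}^{-1} .
\]
Composing with $\ats g = \psi_{M_2} \circ \pt(g|_{\O M_2}) \circ \theta_{M_3}$, the two copies of $\psi_{M_2}$ cancel, and contravariance of $\pt$ gives $\pt(f|_{\O M_1}) \circ \pt(g|_{\O M_2}) = \pt\big((g|_{\O M_2}) \circ (f|_{\O M_1})\big) = \pt\big((g \star f)|_{\O M_1}\big)$. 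Therefore
\[
\s(\ats f) \circ \ats g = \psi_{M_1} \circ \pt\big((g \star f)|_{\O M_1}\big) \circ \theta_{M_3} = \ats(g \star f),
\]
as required.

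The only real source of friction I anticipate is the bookkeeping around the several identifications of a sober space with its soberification ($\s = \s\s$ and $\s\pt\O M = \pt\O M$) and the directions in which $\pt$ and the $\TopS$-composition $\bullet$ reverse arrows, all of which must be handled carefully so that the cancellation of the $\psi_{M_2}$'s and the appeal to $\s\theta_M = \psi_M^{-1}$ are legitimate; conceptually there is nothing new beyond substituting into the formulas already proved in and after \cref{proximity to quasi}. (One could also bypass the factorization and compute both sides directly from $\ats f(y) = \{\eta_M(a) \mid a \in \O M,\, y \le f(a)\}$, but that route is noticeably more cumbersome.)
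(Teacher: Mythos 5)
Your proof is correct and follows essentially the same route as the paper: for composition it is the paper's computation verbatim (factor $\ats f$ through $\psi\circ\pt(\cdot)\circ\theta$, use $\s\theta=\psi^{-1}$ to cancel the middle terms, and invoke $(g\star f)|_{\O M_1}=g|_{\O M_2}\circ f|_{\O M_1}$). For identities the paper instead computes $\ats 1_M(y)$ elementwise and recognizes $F_y=\lambda_{\at M}(y)$, whereas you reuse the factorization together with $\psi_M\circ\theta_M=\lambda_{\at M}$; both rest on the same displayed identities, so this is only a cosmetic difference.
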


\begin{proof}
    For a spatial MT-algebra $M$, let $\ats M = \at M$ and for a proximity morphism $f:M \to N$ between spatial MT-algebras, let $\ats f:\at N \leadsto \at M$ be defined as in \cref{proximity to quasi}. Then $\ats$ is well defined on both objects and morphisms. Moreover, for proximity morphisms $f:M_1 \to M_2$ and $g:M_2 \to M_3$, by \cref{proximity to quasi} and \eqref{eq:s theta psi}, 
   \begin{align*} 
    \ats f \bullet \ats g
    &=\s\ats f \circ \ats g\\
    &=\psi_{M_1} \circ \pt(f|_{\O M_1}) \circ  \s\theta_{M_2} \circ \psi_{M_2} \circ \pt(g|_{\O M_1}) \circ \theta_{M_3}\\ 
    &=\psi_{M_1} \circ \pt(g|_{\O M_2} \circ f|_{\O M_1} ) 
     \circ \theta_{M_3}\\
    &= \psi_{M_1} \circ \pt((g \star f)|_{\O M_1}) \circ \theta_{M_3}\\
    &= \ats(g \star f).
    \end{align*}
    Furthermore, for $y \in \ats M$, 
    \begin{align*} 
    \ats 1_M (y)&=\{\eta_M(a) \mid a \in \O M , \, y \leq 1_M(a)\}\\ 
    &= \{ \eta_M(a) \mid a \in \O M , \, y \leq a\}\\ 
    &=\{\eta_M(a) \mid a \in \O M , \, y \in \eta_M(a)\}\\
    &=F_y =\lambda_{\ats M}(y).
    \end{align*}
    Thus, $\ats: \SMTP^{\mathrm{op}} \to \TopS$  is a well-defined functor. 
\end{proof}

To define a functor in the other direction, we require the following:

\begin{lemma} \label{MT + prox = prox}
    If $f : M_1 \to M_2$ is a proximity morphism and $g : M_2 \to M_3$ is an  MT-morphism, then $g \circ f : M_1 \to M_3$ is a proximity morphism.
\end{lemma}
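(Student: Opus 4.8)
The plan is to verify that $g \circ f$ satisfies the four axioms \ref{def: proximitymor 1}--\ref{def: proximitymor 4} of a proximity morphism, using the fact that $g$, being an MT-morphism, is a complete boolean homomorphism with $g(\Box a) \le \Box g(a)$, and hence in particular preserves all meets, all joins, and negation. The only delicate point is \ref{def: proximitymor 1}, since an MT-morphism need not restrict to a frame morphism on opens (it only satisfies $g(\Box a)\le\Box g(a)$); but because $f$ is a proximity morphism, for $u \in \O M_1$ we have $f(u) \in \O M_2$, so we only need to know how $g$ behaves on \emph{open} elements of $M_2$. On opens, the restriction of an MT-morphism is genuinely a frame morphism by \cref{thm: O functor} — this is exactly the content of the $\O$ functor — so $g|_{\O M_2}$ preserves finite meets and arbitrary joins, and composing with the frame morphism $f|_{\O M_1}$ gives that $(g\circ f)|_{\O M_1}$ is a frame morphism.

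After that, the remaining axioms are routine. For \ref{def: proximitymor 2}: $f$ preserves binary meets by \ref{def: proximitymor 2} and $g$ preserves all meets (being a boolean homomorphism), so $(g\circ f)(a\wedge b) = g(f(a)\wedge f(b)) = g(f(a))\wedge g(f(b))$. For \ref{def: proximitymor 3}: given a finite $S \subseteq \LC M_1$, we have $f(\bigvee S) = \bigvee f[S]$ by \ref{def: proximitymor 3}, and then $g(\bigvee f[S]) = \bigvee g[f[S]]$ since $g$ preserves arbitrary (in particular finite) joins; hence $(g\circ f)(\bigvee S) = \bigvee (g\circ f)[S]$. For \ref{def: proximitymor 4}: starting from $f(a) = \bigvee\{f(x) \mid x\in\LC M_1,\ x\le a\}$ (which is \ref{def: proximitymor 4} for $f$) and applying $g$, which preserves this (arbitrary) join, we obtain $(g\circ f)(a) = \bigvee\{g(f(x)) \mid x\in\LC M_1,\ x\le a\} = \bigvee\{(g\circ f)(x) \mid x\in\LC M_1,\ x\le a\}$, which is exactly \ref{def: proximitymor 4} for $g\circ f$.

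I do not anticipate a real obstacle here; the lemma is essentially a bookkeeping exercise, and the one place requiring a moment's thought is the observation that \ref{def: proximitymor 1} for the composite needs only the behavior of $g$ on the \emph{open} elements $f[\O M_1] \subseteq \O M_2$, where $g$ is well-behaved precisely because the open-element functor $\O:\MT\to\Frm$ is defined (\cref{thm: O functor}). So I would state this as the key observation and let the rest follow by direct computation.

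\begin{proof}
    We verify that $g \circ f$ satisfies \ref{def: proximitymor 1}--\ref{def: proximitymor 4}.

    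\ref{def: proximitymor 1}: Since $f$ is a proximity morphism, $f|_{\O M_1} : \O M_1 \to \O M_2$ is a frame morphism; in particular $f(u) \in \O M_2$ for each $u \in \O M_1$. Since $g$ is an MT-morphism, by \cref{thm: O functor} its restriction $g|_{\O M_2} : \O M_2 \to \O M_3$ is a frame morphism. Therefore $(g\circ f)|_{\O M_1} = (g|_{\O M_2}) \circ (f|_{\O M_1})$ is a composition of frame morphisms, hence a frame morphism.

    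\ref{def: proximitymor 2}: Let $a, b \in M_1$. Since $g$ is a boolean homomorphism, it preserves finite meets, so using \ref{def: proximitymor 2} for $f$,
    \[
    (g \circ f)(a \wedge b) = g(f(a) \wedge f(b)) = g(f(a)) \wedge g(f(b)) = (g\circ f)(a) \wedge (g\circ f)(b).
    \]

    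\ref{def: proximitymor 3}: Let $S \subseteq \LC M_1$ be finite. By \ref{def: proximitymor 3} for $f$, $f(\bigvee S) = \bigvee \{ f(s) \mid s \in S \}$. Since $g$ is a complete boolean homomorphism, it preserves this join, whence
    \[
    (g\circ f)\left(\bigvee S\right) = g\left(\bigvee \{ f(s) \mid s \in S \}\right) = \bigvee \{ g(f(s)) \mid s \in S \} = \bigvee \{ (g\circ f)(s) \mid s \in S \}.
    \]

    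\ref{def: proximitymor 4}: Let $a \in M_1$. By \ref{def: proximitymor 4} for $f$, $f(a) = \bigvee \{ f(x) \mid x \in \LC M_1, \, x \le a \}$. Since $g$ preserves arbitrary joins,
    \[
    (g\circ f)(a) = g\left(\bigvee \{ f(x) \mid x \in \LC M_1, \, x \le a \}\right) = \bigvee \{ g(f(x)) \mid x \in \LC M_1, \, x \le a \} = \bigvee \{ (g\circ f)(x) \mid x \in \LC M_1, \, x \le a \}.
    \]

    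Thus $g \circ f$ is a proximity morphism.
\end{proof}
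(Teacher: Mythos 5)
Your proof is correct and follows essentially the same route as the paper's: verify the four axioms directly, with \ref{def: proximitymor 1}--\ref{def: proximitymor 3} inherited because the MT-morphism $g$ preserves finite meets, finite joins, and restricts to a frame morphism on opens (\cref{thm: O functor}), and \ref{def: proximitymor 4} following from $g$ preserving arbitrary joins. (The remark in your preamble that an MT-morphism ``need not restrict to a frame morphism on opens'' is a momentary slip that you immediately correct; the proof itself uses the correct fact.)
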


\begin{proof}
   Since $g$ is an MT-morphism, it satisfies  \ref{def: proximitymor 1}--\ref{def: proximitymor 3}, and hence so does the composition $g \circ f$. For \ref{def: proximitymor 4}, observe that 
    \begin{align*}
        gf(a)
        &= g\left(\bigvee\{f(x) \mid x \in \LC(M_1), \, x \leq a\}\right) \\
        &= \bigvee\{g(f(x)) \mid x \in \LC(M_1), \, x \leq a\} 
    \end{align*}
    for each $a\in M_1$. Thus, $g \circ f$ is a proximity morphism.
\end{proof}

Let $M,N$ be MT-algebras. If $h : \O M \to \O N$ is a frame morphism, then $h$ lifts to a proximity morphism  given by the following composition in $\MTP$:
    \[\begin{tikzcd}
    M \ar[r, "\varphi_M"] & \F\O M \ar[r, "\F h"] & \F\O N \ar[r, "\zeta_N"] & N.
\end{tikzcd}\]
For a topological space $X$, 
\[\Omega(\lambda_X) = \P(\lambda_X)|_{\O \P X} : \O \P \s X \to \O \P X\]
is a frame isomorphism. Therefore, the frame isomorphism $\Omega(\lambda_X)^{-1}$ lifts to a proximity morphism $h_X : \P X \to \P\s X$, which is a proximity isomorphism by \cref{l:isochar}(1). 

\begin{definition} \label{sober proximity iso}
    For a topological space $X$, let $h_X : \P X \to \P \s X$ be the lift of $\Omega(\lambda_X)^{-1}$ described above.
\end{definition}

Recall from \cref{sec: proximity morphisms} that for proximity morphisms $f,g$ and a locally closed element $x$, we have $(g\star f)(x)= gf(x)$.  
Therefore, for $D \in \LC\P X$, by \cref{adjoint equivalence} we have
\begin{equation*}
\tag{$\clubsuit$}\label{eq:h}
\begin{aligned}
    h_X(D) &= \zeta_{\P Y} \circ \F\Omega(\lambda_X)^{-1} \circ \varphi_{\P X}(D) \\
    &= \zeta_{\P Y} \circ \F\Omega(\lambda_X)^{-1}(D)\\
    &= \zeta_{\P Y}(\B\Omega(\lambda_X)^{-1}(D))\\
    &= \B\Omega(\lambda_X)^{-1}(D). 
\end{aligned}
\end{equation*}

\begin{lemma}\label{quasi to proximity}
    If $f : X \leadsto Y$ is a sober map then $\Pp f := \P f \circ h_Y$ is a proximity morphism from $\P Y$ to $\P X$.
\end{lemma}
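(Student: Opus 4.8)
The plan is to observe that $\Pp f = \P f \circ h_Y$ is, by construction, an ordinary function composition of a proximity morphism followed by an MT-morphism, and then to quote \cref{MT + prox = prox}.

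First I would unwind the two factors. By \cref{defn: sober map}, a sober map $f : X \leadsto Y$ is literally a continuous map $f : X \to \s Y$; hence, by functoriality of $\P : \Top \to \MT^{\mathrm{op}}$ (recalled in \cref{Interior algebras and MT-algebras}), the dual $\P f = f^{-1} : \P\s Y \to \P X$ is an MT-morphism. On the other side, $h_Y : \P Y \to \P\s Y$ is the lift of the frame isomorphism $\Omega(\lambda_Y)^{-1}$ introduced in \cref{sober proximity iso}, and it was already recorded there (via \cref{l:isochar}(1)) that $h_Y$ is a proximity morphism --- indeed a proximity isomorphism. Therefore $\Pp f = \P f \circ h_Y$ is precisely the function composition of the proximity morphism $h_Y : \P Y \to \P\s Y$ with the MT-morphism $\P f : \P\s Y \to \P X$.

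Next I would apply \cref{MT + prox = prox} with $M_1 := \P Y$, $M_2 := \P\s Y$, $M_3 := \P X$, taking the proximity morphism there to be $h_Y$ and the MT-morphism there to be $\P f$. That lemma yields at once that $\P f \circ h_Y : \P Y \to \P X$ satisfies \ref{def: proximitymor 1}--\ref{def: proximitymor 4}, i.e.\ is a proximity morphism, which is exactly the assertion of the lemma.

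There is essentially no real obstacle: the entire content is recognizing the composite as an instance of \cref{MT + prox = prox}. The only point requiring a moment's care is the bookkeeping of variance and domains --- namely, that the sober map $f : X \to \s Y$ produces $\P f$ with \emph{domain} $\P\s Y$ (not $\P Y$), so that it genuinely post-composes $h_Y$, and that the relevant proximity isomorphism is $h_Y$ (attached to the codomain-side space $Y$) rather than $h_X$. Once these are aligned, the conclusion is immediate.
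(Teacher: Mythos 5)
Your proposal is correct and matches the paper's proof essentially verbatim: both recognize $\P f = f^{-1}:\P\s Y\to\P X$ as an MT-morphism dual to the continuous map $f:X\to\s Y$, note that $h_Y$ is a proximity morphism by construction, and conclude by applying \cref{MT + prox = prox}. Your extra care about domains and variance is sound but not a point of divergence.
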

\begin{proof}
    By definition, $f$ is a continuous map from $X$ to $\s Y$. Thus, $\P f=f^{-1} : \P \s Y \to \P X$ is an MT-morphism. Consequently, $\P f \circ h_Y : \P Y \to \P X$ is a proximity morphism by \cref{MT + prox = prox}. 
    \end{proof}

\begin{lemma}\label{proximity on locally closed elements}
    Let $f:M \to N$ and $g:M \to N$ be proximity morphisms. If $f(u)=g(u)$ for all $u \in \sfO M$, then $f=g$. 
\end{lemma}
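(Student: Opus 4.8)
The plan is to reduce the statement to locally closed elements and then apply \ref{def: proximitymor 4}. First, note that by \ref{def: proximitymor 4} (equivalently \cref{rem: prox equiv}) we have, for every $a\in M$,
\[
f(a)=\bigvee\{f(x)\mid x\in\LC M,\ x\le a\},\qquad g(a)=\bigvee\{g(x)\mid x\in\LC M,\ x\le a\},
\]
so it suffices to prove that $f$ and $g$ agree on $\LC M$.

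The key step is to show that $f$ and $g$ agree on $\sfO M\cup\sfC M$. They agree on $\sfO M$ by hypothesis. For $c\in\sfC M$, the element $\neg c$ is open, so $f(\neg c)=g(\neg c)$; applying \cref{l:proxmorproperties 1} to the open element $\neg c$ (and using $\neg\neg c=c$) gives $f(c)=\neg f(\neg c)=\neg g(\neg c)=g(c)$. Now let $x\in\LC M$. By footnote~\ref{footnote 2} we may write $x=u\wedge\diamond b$ with $u\in\sfO M$ and $b\in M$; since $\diamond b\in\sfC M$, the previous observations give $f(u)=g(u)$ and $f(\diamond b)=g(\diamond b)$, whence \ref{def: proximitymor 2} yields $f(x)=f(u)\wedge f(\diamond b)=g(u)\wedge g(\diamond b)=g(x)$. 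Combining with the first paragraph, $f(a)=g(a)$ for all $a\in M$.

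I do not anticipate a genuine obstacle here; the only point requiring care is that, since proximity morphisms need not be order-theoretic maps determined by their values on a join-generating subset, the reduction must be routed through \ref{def: proximitymor 4} rather than through $\sfO M$ alone. It is precisely the built-in approximation of each element by the locally closed elements below it, together with \ref{def: proximitymor 2} and the negation behaviour of \cref{l:proxmorproperties 1} on open and closed elements, that makes the argument go through.
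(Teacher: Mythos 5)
Your proof is correct and follows essentially the same route as the paper's: decompose a locally closed element as a meet of an open and a closed element, use \ref{def: proximitymor 2} together with the negation behaviour of \cref{l:proxmorproperties 1} to see that $f$ and $g$ agree on $\LC M$, and conclude via \ref{def: proximitymor 4}. The only cosmetic difference is that the paper writes $x=u\wedge\neg v$ with $u,v\in\sfO M$ and handles the negation inline, whereas you first establish agreement on $\sfC M$ separately.
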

\begin{proof} 
Let $x \in \LC M$. Then $x=u \wedge \neg v$ for some $u, v \in \sfO M$. Therefore, 
\[f(x)=f(u \wedge \neg v)=f(u) \wedge \neg f(v) = g(u) \wedge \neg g(v)=g(u \wedge \neg v)=g(x).
\]
    Thus, for $a \in M$, 
    \[
    f(a)=\bigvee \{f(x) \mid x \in \LC M, \, x \leq a\} = \bigvee \{g(x) \mid x \in \LC M, \, x \leq a\}=g(a). 
    \]
    Consequently, $f=g$.
\end{proof}

We point out that for $U \in \Omega Y$, 
\begin{equation*}
\tag{$\spadesuit$}\label{eq:Pp}
\begin{aligned}
\Pp f(U) &= (\P f \circ h_Y)(U) \\
&= f^{-1}(\Omega(\lambda_{Y})^{-1}(U)) \\
&= \{x \in X \mid f(x) \in \Omega(\lambda_Y)^{-1}(U)\}\\
&= \{x \in X \mid U \in f(x)\}.
\end{aligned}
\end{equation*}
This will be used in what follows.

\begin{proposition} \label{prop: functor Pp}
   $\Pp: \TopS \to {\SMTP}^{\mathrm{op}}$ is a functor. 
   \end{proposition}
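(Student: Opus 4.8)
The plan is to verify that $\Pp$ preserves identities and composition, since it is already clear from \cref{quasi to proximity} that $\Pp$ is well defined on objects (sending $X$ to $\P X$, a spatial MT-algebra) and on morphisms. Throughout I would work with the formula \eqref{eq:Pp}, namely $\Pp f(U) = \{x \in X \mid U \in f(x)\}$ for $U \in \Omega Y$, together with \cref{proximity on locally closed elements}, which reduces every equality of proximity morphisms $\P Y \to \P X$ to an equality of their values on opens of $Y$. This is the key simplification: rather than chasing the $\star$-composition through locally closed elements, I only need to check agreement on $\Omega Y$.

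First I would handle identities. The identity on $X$ in $\TopS$ is $\lambda_X : X \to \s X$, so I must show $\Pp(\lambda_X) = 1_{\P X}$, the identity morphism of $\MTP$ at $\P X$. Using \eqref{eq:Pp}, for $U \in \Omega X$ we have $\Pp(\lambda_X)(U) = \{x \in X \mid U \in \lambda_X(x)\} = \{x \in X \mid U \in F_x\} = \{x \in X \mid x \in U\} = U$, recalling $\lambda_X(x) = F_x = \{V \in \Omega X \mid x \in V\}$. Since $1_{\P X}$ also restricts to the identity on $\Omega X$ (as $\Omega X \subseteq \LC \P X$), \cref{proximity on locally closed elements} gives $\Pp(\lambda_X) = 1_{\P X}$.

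Next, composition. Given sober maps $f : X \leadsto Y$ and $g : Y \leadsto Z$, their $\TopS$-composite is $g \bullet f = \s g \circ f : X \to \s Z$ (under the identification $\s = \s\s$), and I must show $\Pp(g \bullet f) = \Pp f \star \Pp g$ in $\MTP$. By \cref{proximity on locally closed elements} it suffices to check this on $W \in \Omega Z$; moreover since $\Pp g(W) \in \Omega Y$ and proximity morphisms agree with $\star$-composites on opens only up to care, I should note that for a proximity morphism $p$ and an MT-morphism $q$ the composite $q \circ p$ equals $q \star p$ on locally closed elements, but here both $\Pp f$ and $\Pp g$ are genuine proximity morphisms, so I use that $(\Pp f \star \Pp g)(W) = \bigvee\{\Pp f(\Pp g(x)) \mid x \in \LC \P Z,\ x \le W\}$ and, because $W$ is open hence locally closed, this simplifies: actually $(\Pp f \star \Pp g)(W) = \Pp f(\Pp g(W))$ by the remark that $\star$ agrees with $\circ$ on locally closed elements when the first map is a proximity morphism — here one must be slightly careful, so I would instead directly compute $\Pp f(\Pp g(W))$ using \eqref{eq:Pp} twice: $\Pp g(W) = \{y \in Y \mid W \in g(y)\} \in \Omega Y$ (identifying $\Omega Y$-elements with opens of $Y$ via $\Omega(\lambda_Y)$), and then $\Pp f(\Pp g(W)) = \{x \in X \mid \Pp g(W) \in f(x)\}$. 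On the other hand $\Pp(g \bullet f)(W) = \{x \in X \mid W \in (\s g \circ f)(x)\}$. The task is to show $W \in (\s g)(f(x)) \iff \Pp g(W) \in f(x)$ as elements of $\Omega\s Y$, which unwinds to the definition of $\s g = \pt\Omega g$ acting on points of $\Omega Y$ and the fact that $\Pp g(W)$, viewed in $\Omega Y$, is exactly $(\Omega g)^{-1}(\Omega(\lambda_Z)^{-1}(W))$, i.e. the preimage of $W$ under $g : Y \to \s Z$ pulled back appropriately. The main obstacle will be keeping the identifications $\s = \s\s$, $\Omega(\lambda_X) : \Omega\P\s X \cong \Omega\P X$, and the point-prime correspondence straight, so that the two sides visibly coincide; once the dictionary is set up the verification is a routine diagram chase, using naturality of $\lambda$ (the commutative square displayed just before \cref{defn: TopS}) to move between $f$ and $\s f$. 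I would therefore devote the bulk of the write-up to carefully stating these identifications and then present the comparison on opens as a short chain of equalities, concluding via \cref{proximity on locally closed elements} that $\Pp(g \bullet f) = \Pp f \star \Pp g$, whence $\Pp : \TopS \to \SMTP^{\mathrm{op}}$ is a functor.
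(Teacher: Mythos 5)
Your proposal is correct and follows essentially the same route as the paper: reduce both the identity and composition checks to open elements via \cref{proximity on locally closed elements}, compute with \eqref{eq:Pp}, and for composition verify that $\Pp g(W)$ is exactly the preimage of $W$ under $g:Y\to\s Z$ transported through $\Omega(\lambda)$ — the paper packages this last step as the map-level identity $h_Y\circ\P g=\P(\s g)$ on $\Omega\s Z$, which is the same computation you describe pointwise. Your worry about $(\Pp f\star\Pp g)(W)=\Pp f(\Pp g(W))$ is unnecessary: this holds for any $W\in\LC\P Z$ by the remark immediately following the definition of $\star$.
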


\begin{proof}
 For $X \in \TopS$, let $\Pp X=\P X$ and for a sober map $f:X \leadsto Y$, let $\Pp f : \Pp Y \to \Pp X$ be defined as in \cref{quasi to proximity}. Then $\Pp$ is well defined both on objects and morphisms. For sober maps $f:X \leadsto Y$ and $g:Y \leadsto Z$, we show that $\Pp(g \bullet f) = \Pp f \star \Pp g$. By \cref{proximity on locally closed elements}, it suffices to show that they agree on open elements.
Let $U \in \O\P Z = \Omega Z$. Then 
 \[
    \Pp(g \bullet f)(U) = \Pp(\s g \circ f)(U) = \P(\s g\circ f)\circ h_Z(U) = \P f \circ \P\s g \circ h_Z(U)
 \]
 and since $\star$ is usual composition on open elements,
  \[
    \Pp f \star \Pp g(U) = \Pp f \circ \Pp g(U) = \P f \circ h_Y \circ \P g \circ h_Z(U).
 \]
 Therefore, it is enough to show that $h_Y \circ \P g(V) = \P(\s g(V))$ for all $V \in \O\P\s Z = \Omega\s Z$. 
Using \eqref{eq:h}, we have 
\begin{align*}
    h_Y \circ \P g(V) &=\Omega\lambda_{Y}^{-1}(g^{-1}(V))
    = (\s g)^{-1}(V) = \P\s g(V),
\end{align*}
where the second equality holds because 
\begin{align*}
    z \in \Omega(\lambda_Y) ((\s g)^{-1}(V)) &\iff \lambda_Y (z) \in (\s g)^{-1}(V) 
    \iff \s g(\lambda_Y(z)) \in V \\
    &\iff g(z) \in V \iff z \in g^{-1}(V).
\end{align*}
Finally, for $W \in \O \P X$, by 
\eqref{eq:Pp},
 \[\Pp \lambda_X(W)=\{x \in X \mid W \in \lambda_X(x)\} = \{x \in X \mid x \in W\} = W = \Omega(1_{\P X})(W).\] 
 Thus, by \cref{proximity on locally closed elements},  $ \Pp \lambda_X=1_{\P X}$. 
 \end{proof}

We next connect $\Top$ with $\TopS$ and $\MT$ with $\MTP$.

\begin{proposition}
\begin{enumerate}[ref=\theproposition(\arabic*)]
    \item[]
    \item $\Lambda : \Top \to \TopS$ is a functor given by $\Lambda X = X$ for each topological space $X$ and ${\Lambda f = \lambda_Y \circ f}$ for each continuous map $f : X \to Y$. \label[proposition]{functor from top to tops}
    \item $\Gamma: \MT \to \MTP$ is a functor given by $\Gamma M=M $ for each MT-algebra $M$ and $\Gamma g = 1_N \circ g$ for each MT-morphism $g:M \to N$. \label[proposition]{functor from mt to mtp}
    \end{enumerate}
\end{proposition}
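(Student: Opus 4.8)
The plan is to verify functoriality of $\Lambda$ and $\Gamma$ separately. For $\Lambda$: a continuous map $f:X\to Y$ composes with the continuous map $\lambda_Y:Y\to\s Y$ to give a continuous map $\lambda_Y\circ f:X\to\s Y$, i.e.\ a sober map $X\leadsto Y$, so $\Lambda f$ is a well-defined morphism of $\TopS$. Since the identity morphism of $X$ in $\TopS$ is $\lambda_X$ (\cref{defn: TopS}), $\Lambda(1_X)=\lambda_X\circ 1_X=\lambda_X$ is that identity. For composites, given $f:X\to Y$ and $g:Y\to Z$ in $\Top$, I would compute $\Lambda g\bullet\Lambda f=\s(\Lambda g)\circ\Lambda f=\s(\lambda_Z\circ g)\circ\lambda_Y\circ f=\s\lambda_Z\circ\s g\circ\lambda_Y\circ f$, and then invoke the identifications set up before \cref{defn: TopS}---namely $\s=\s\s$ and $\s\lambda_Z=\lambda_{\s Z}$ is the identity on $\s Z$---to reduce this to $\s g\circ\lambda_Y\circ f$; naturality of $\lambda$ rewrites $\s g\circ\lambda_Y$ as $\lambda_Z\circ g$, so the composite equals $\lambda_Z\circ g\circ f=\Lambda(g\circ f)$. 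Thus part (1) is a bookkeeping check with the identifications already in place.

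For $\Gamma$, the map attached to an MT-morphism $g:M\to N$ is most conveniently written out as $(\Gamma g)(a)=\bigvee\{g(x)\mid x\in\LC M,\ x\le a\}=g(1_M(a))$, i.e.\ $g$ precomposed with the $\MTP$-identity $1_M$; in particular $\Gamma g$ agrees with $g$ on $\LC M$, hence on $\O M$. That $\Gamma g$ is a proximity morphism is then immediate from the machinery already developed: $1_M:M\to M$ is a proximity morphism by \cref{lem: identity}, and a proximity morphism followed by an MT-morphism is again a proximity morphism by \cref{MT + prox = prox}. Preservation of identities follows because the $\MT$-identity on $M$ is the identity function, so $\Gamma(\mathrm{id}_M)=\mathrm{id}_M\circ 1_M=1_M$, the $\MTP$-identity. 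For composites, given $g_1:M_1\to M_2$ and $g_2:M_2\to M_3$ in $\MT$, I would expand $(\Gamma g_2\star\Gamma g_1)(a)=\bigvee\{(\Gamma g_2)\bigl((\Gamma g_1)(x)\bigr)\mid x\in\LC M_1,\ x\le a\}$; since $(\Gamma g_1)(x)=g_1(x)$ lies in $\LC M_2$ for each $x\in\LC M_1$ (MT-morphisms preserve locally closed elements), the inner term is $g_2(g_1(x))$, so the sum is $\bigvee\{(g_2\circ g_1)(x)\mid x\in\LC M_1,\ x\le a\}$, which equals $(g_2\circ g_1)(1_{M_1}(a))=\Gamma(g_2\circ g_1)(a)$ because $g_2\circ g_1$ is a complete boolean homomorphism.

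The step I expect to need the most care is the auxiliary fact---used both to see that $\Gamma$ is well defined and in the composition computation---that an MT-morphism preserves locally closed elements: if $g$ is a boolean homomorphism with $g(\Box a)\le\Box g(a)$, then for open $u$ one gets $g(u)=g(\Box u)\le\Box g(u)\le g(u)$, so $g(u)$ is open; taking complements, $g$ preserves closed elements; and hence it preserves meets of an open with a closed element, i.e.\ locally closed elements. The other point worth flagging is conceptual rather than technical: an MT-morphism need not satisfy (P4), so it is not a proximity morphism as it stands---what makes $\Gamma$ land in $\MTP$ is precisely the correction by $1_M$ on the source side (equivalently, using $\star$ in place of $\circ$), which is exactly the content of \cref{MT + prox = prox} together with \cref{lem: identity}.
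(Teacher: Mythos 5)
Part (1) is correct and is essentially the paper's own computation: $\Lambda 1_X=\lambda_X$, and $\Lambda g\bullet\Lambda f=\s\lambda_Z\circ\s g\circ\lambda_Y\circ f=\s g\circ\lambda_Y\circ f=\lambda_Z\circ g\circ f=\Lambda(g\circ f)$, using that $\s\lambda_Z$ is the identity and that $\lambda$ is natural.

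In part (2) there is a genuine gap at your very first step. You rewrite $(\Gamma g)(a)=1_N(g(a))$ as $\bigvee\{g(x)\mid x\in\LC M,\ x\le a\}=g(1_M(a))$, thereby silently replacing $1_N\circ g$ by $g\circ 1_M$. These are different maps in general. Take for $M$ the four-element algebra of \cref{r:isonotbijection}, so that $\LC M=\{0,1\}$; take $N=\P\{1,2\}$ with the discrete topology, so that every element of $N$ is locally closed and $1_N$ is the identity function; and let $g\colon M\to N$ be the boolean isomorphism with $g(a)=\{1\}$ and $g(b)=\{2\}$. Then $g$ is an MT-morphism (it is a complete boolean isomorphism and $g(\Box x)\le g(x)=\Box g(x)$ since $\Box$ is the identity on $N$), but $(1_N\circ g)(a)=\{1\}$ whereas $(g\circ 1_M)(a)=g(0)=\varnothing$. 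Worse, $1_N\circ g$ violates \ref{def: proximitymor 4} at the atom $a$, because $\bigvee\{(1_N\circ g)(x)\mid x\in\LC M,\ x\le a\}=(1_N\circ g)(0)=\varnothing\neq\{1\}$; so $1_N\circ g$ is not a proximity morphism at all, and \cref{MT + prox = prox} cannot be invoked for it, since that lemma concerns an MT-morphism applied \emph{after} a proximity morphism --- the configuration $g\circ 1_M$, not $1_N\circ g$.

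Everything downstream of that identification is sound: $g\circ 1_M$ \emph{is} a proximity morphism by \cref{lem: identity} and \cref{MT + prox = prox}, your auxiliary argument that MT-morphisms preserve open, closed, and hence locally closed elements is correct, and your verification that $g\mapsto g\circ 1_M$ preserves identities and composition goes through. So what you have actually proved is functoriality of the source-corrected assignment $g\mapsto g\circ 1_M$. This is the assignment the paper's own proof computes with as well, since the step $1_{M_3}(g(f(a)))=\bigvee\{g(f(x))\mid x\in\LC M_1,\ x\le a\}$ there is the same unjustified identification. To close the gap you should either take $\Gamma g=g\circ 1_M$ as the definition of $\Gamma$ on morphisms, or restrict to a setting in which $1_N\circ g=g\circ 1_M$, e.g.\ when $M$ is a $T_D$-algebra, where $1_M$ is the identity function by \cref{rem: TDMTP}.
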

\begin{proof}
(1) It is sufficient to show that $\Lambda$ preserves composition and identities. The latter is immediate since $\Lambda 1_X = \lambda_X \circ 1_X = \lambda_X$. For composition, let $f : X \to Y$ and $g : Y \to Z$ be continuous maps.
Then
    \[
        \Lambda g \bullet \Lambda f = (\lambda_Z \circ g) \bullet (\lambda_Y \circ f) =  \s\lambda_Z \circ \s g \circ \lambda_Y \circ f =\s g \circ \lambda_Y \circ f = \lambda_Z \circ g \circ f = \Lambda(g \circ f),
    \]
    where the third equality holds because $\s \lambda_Z$ is the identity and the fourth because $\lambda$ is a natural transformation.
    \[\begin{tikzcd}
        X \ar[r,"f"] \ar[d, "\lambda_X"] & Y \ar[r,"g"] \ar[d, "\lambda_Y"]& Z \ar[d, "\lambda_Z"]\\
        \s X  \ar[r, "\s f"] & \s Y \ar[r, "\s g"] & \s Z \arrow[loop right]{r}{\s \lambda_Z}
    \end{tikzcd}\]
   
     (2) Again, it is sufficient to show that $\Gamma$ preserves composition and identities. For an MT-algebra $M$, let $I_M$ be the identity in $\MT$ and $1_M$ the identity in $\MTP$. Then $\Gamma I_M=1_M \circ I_M= 1_M$. Let $f:M_1 \to M_2$ and $g:M_2 \to M_3$ be MT-mophisms. Then, for $a \in M_1$, 
    \[
    \Gamma(g \circ f)(a)=1_{M_3}(g(f(a)))=\bigvee \{g(f(x) \mid x \in \LC M, \, x \leq a \} = (g\star f)(a).
    \]
    Thus, $\Gamma$ is a functor.
\end{proof}

\begin{lemma} \label{lem: con diagram}    For a continuous map $f : X \to \s Y$, the following diagram commutes:
    \[
\begin{tikzcd}
    X \ar[r, "\varepsilon_X"] \ar[d,"f"'] & \at\P X \ar[r, "\lambda_{\at\!\P\! X}"] \ar[d,"\at\!\P\! f"] & \s\at\P X \ar[d,"\s\!\at\!\P\!f"]\\ 
    \s Y \ar[r, "\varepsilon_{\s\!Y}"'] & \at\P\s Y \ar[r, "\lambda_{\at\!\P\!\s\!Y}"'] & \s\at\P\s Y
\end{tikzcd}
\]
\end{lemma}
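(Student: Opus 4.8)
The plan is to read the displayed rectangle as the horizontal concatenation of two commuting squares, each of which is an instance of a naturality condition already available in the excerpt, and then to paste them.

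First I would handle the left square. Its horizontal arrows are the components $\varepsilon_X$ and $\varepsilon_{\s Y}$ of the unit $\varepsilon : 1_{\Top} \to \at\circ\P$ of the MT-duality adjunction (\cref{MT duality}), and its vertical arrows are $f : X \to \s Y$ together with its image $\at\P f : \at\P X \to \at\P\s Y$ under the composite functor $\at\circ\P$. Here one should note that $\at\P f$ is a genuine morphism of $\Top$: since $f$ is continuous, $\P f$ is an MT-morphism, and $\at$ sends MT-morphisms to continuous maps. Hence naturality of $\varepsilon$ applied to $f$ gives precisely $\at\P f \circ \varepsilon_X = \varepsilon_{\s Y} \circ f$, i.e.\ commutativity of the left square.

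Next I would handle the right square. Its horizontal arrows are the components $\lambda_{\at\P X}$ and $\lambda_{\at\P\s Y}$ of the unit $\lambda : 1_{\Top} \to \s$ of the soberification reflection (recalled at the start of \cref{sec: duality for sober maps}), and its vertical arrows are $\at\P f$ and its image $\s\at\P f$ under the functor $\s$. Thus naturality of $\lambda$ applied to the continuous map $\at\P f$ gives $\s\at\P f \circ \lambda_{\at\P X} = \lambda_{\at\P\s Y} \circ \at\P f$, i.e.\ commutativity of the right square.

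Finally, pasting the two commuting squares along the common edge $\at\P X \xrightarrow{\at\P f} \at\P\s Y$ yields commutativity of the whole rectangle. The argument is entirely formal; there is no real obstacle, and the only point that deserves a moment's attention is the verification that $\at\P f$ is a morphism in $\Top$, so that naturality of $\lambda$ is applicable to it — but this is immediate from the functoriality of $\P$ and $\at$.
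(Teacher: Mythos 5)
Your proof is correct and follows essentially the same route as the paper: the left square is naturality of $\varepsilon$ at the continuous map $f : X \to \s Y$, and the right square is naturality of $\lambda$ (the paper phrases this as naturality of the whiskered transformation $\lambda\,(\at\P)$ at $f$, which is the same identity), after which the two squares are pasted. No gaps.
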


\begin{proof}
The left square commutes because $\varepsilon : 1_{\Top} \to \at \P$ is a natural transformation (see \cref{MT duality}). The right square commutes because applying the functor $\at \P$ to the natural transformation $\lambda : 1_{\Top} \to \s$ yields a natural transformation $\lambda \circ (\at  \P) : \at  \P \to \s \at  \P$. 
\end{proof}

\begin{samepage}
\begin{lemma} \label{lemma: atP and atsPp}
    Let $f : X \to \s Y$ be a continuous map. 
    \begin{enumerate}
        \item $\ats\Pp f(x) = \at\P f(x)$ for all $x \in \at \P X$. 
        \item $\s\ats\Pp f = \s\at\P f$.
    \end{enumerate}
\end{lemma}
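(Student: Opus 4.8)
The plan is to establish (1) by evaluating both sides on atoms, and then to deduce (2) by applying the soberification functor $\s = \pt\Omega$ to the resulting equality of maps.

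For (1) I would first use that $\P X$ is spatial, so that $\varepsilon_X : X \to \at\P X$, $x_0 \mapsto \{x_0\}$, is a homeomorphism (\cref{MT duality}); hence it suffices to check the identity at an atom $\{x_0\}$. On the right, the computation of the left adjoint of a preimage map carried out in the proof of \cref{l: lc iff dmt} gives $(\P f)^*(\{x_0\}) = \{f(x_0)\}$, so $\at\P f(\{x_0\}) = \{f(x_0)\}$ — equivalently, this is commutativity of the left square of \cref{lem: con diagram}, that is, $\at\P f\circ\varepsilon_X = \varepsilon_{\s Y}\circ f$. On the left, I would unwind the definition in \cref{proximity to quasi} with $M=\P Y$ and $N=\P X$ (so $\O M = \Omega Y$ and $\eta_M = \eta_{\P Y}$) to get
\[
\ats\Pp f(\{x_0\}) = \{\,\eta_{\P Y}(U) \mid U\in\Omega Y,\ \{x_0\}\le\Pp f(U)\,\}.
\]
Since $\{x_0\}\le\Pp f(U)$ means $x_0\in\Pp f(U)$, which by \eqref{eq:Pp} holds iff $U\in f(x_0)$, this set is exactly $\eta_{\P Y}[f(x_0)] = \psi\big(f(x_0)\big)$, where $\psi := \pt(\eta_{\P Y}^{-1}) : \pt\O\P Y \to \pt\Omega\at\P Y = \s\at\P Y$ is the homeomorphism from the proof of \cref{proximity to quasi} (recall $\pt\O\P Y = \pt\Omega Y = \s Y$). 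Transporting $\s\at\P Y$ to $\at\P\s Y$ along the canonical homeomorphism $\varepsilon_{\s Y}\circ\psi^{-1}$, the element $\eta_{\P Y}[f(x_0)]$ corresponds to $\{f(x_0)\}$, so $\ats\Pp f(\{x_0\}) = \{f(x_0)\} = \at\P f(\{x_0\})$, proving (1).

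For (2) I would apply $\s$ to the equality $\ats\Pp f = \at\P f$ of continuous maps $\at\P X\to\at\P\s Y$ just obtained. Since $\at\P\s Y$ and $\s\at\P Y$ are sober (each is a copy of $\s Y$), the unit $\lambda$ is a homeomorphism at $\s\at\P Y$ and at $\at\P\s Y$, so $\s$ carries the identification $\s\at\P Y\cong\at\P\s Y$ to the canonical identifications of the codomains $\s\s\at\P Y = \s\at\P Y$ and $\s\at\P\s Y = \at\P\s Y$ of $\s\ats\Pp f$ and $\s\at\P f$. Functoriality of $\s$ then yields $\s\ats\Pp f = \s\at\P f$.

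I expect the only real difficulty to be bookkeeping: keeping track of the three mutually homeomorphic spaces $\s Y$, $\s\at\P Y = \pt\Omega\at\P Y$, and $\at\P\s Y$, and verifying that the identification among them is natural enough to be transported by $\s$, so that (2) is a clean consequence of (1). The underlying computations — evaluating $\Pp f$ on opens via \eqref{eq:Pp}, and the left adjoint of a preimage map on a singleton — are entirely routine.
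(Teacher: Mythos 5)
Your proposal is correct and follows essentially the same route as the paper: both arguments reduce to evaluating the two maps at a singleton atom $\{x_0\}$, computing $\at\P f(\{x_0\})=\{f(x_0)\}$ via the left adjoint of $\P f$, unwinding $\ats\Pp f(\{x_0\})$ from \cref{proximity to quasi} and \eqref{eq:Pp} to the completely prime filter determined by $f(x_0)$, and then deducing (2) by applying $\s$. The only (cosmetic) difference is that you let the opens range over $\Omega Y$ and invoke the identification $\s\at\P Y\cong\at\P\s Y$ explicitly, whereas the paper ranges over $\O\P\s Y$, uses $\Pp f = f^{-1}$ there, and leaves the analogous identification $\at\P\s Y = \s\at\P\s Y$ implicit.
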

\end{samepage}

\begin{proof}
    Since (2) follows from (1), it is sufficient to prove (1). Let $x \in \at \P X$. Then $x = \varepsilon(x') = \{x'\}$ for a unique $x' \in X$. Thus, $\at\P f(x) = \varepsilon f(x')$.
    Moreover, $\Pp f(a) = f^{-1}\Omega(\lambda_{\s Y})^{-1}(a) = f^{-1}(a)$ since $\lambda_{\s Y}$ is the identity on $\s Y$. Hence,
    \begin{align*}
        \ats\Pp f(x) &= \{\eta_M(a) \mid a \in \sfO \P \s Y,\ x \leq \Pp f(a)\}\\
        &= \{\eta_M(a) \mid a \in \sfO \P \s Y,\ x \leq f^{-1}(a)\}\\
        &= \{\eta_M(a) \mid a \in \sfO\P \s Y,\ \{x'\} \subseteq f^{-1}(a)\}\\
        &= \{\eta_M(a) \mid a \in \sfO\P \s Y,\ f(x') \in a\}\\
        &= \varepsilon f(x').
    \end{align*}
    Consequently, $\ats\Pp f(x) = \varepsilon f(x') = \at\P f(x)$.
   \end{proof}

\begin{theorem}\label{SMTp equivaent to Tops}
    $\TopS$ is equivalent to $\SMTP^{\mathrm{op}}$.
\end{theorem}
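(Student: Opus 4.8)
The plan is to show that the two functors already constructed, $\Pp : \TopS \to \SMTP^{\mathrm{op}}$ and $\ats : \SMTP^{\mathrm{op}} \to \TopS$, are quasi-inverse to one another; concretely, I would upgrade the unit $\varepsilon$ and counit $\eta$ of the MT-duality (\cref{MT duality}) to natural isomorphisms $1_{\TopS} \cong \ats\Pp$ and $1_{\SMTP^{\mathrm{op}}} \cong \Pp\ats$. This is possible because $\Pp$ and $\ats$ agree with $\P$ and $\at$ on objects, i.e. $\Pp X = \P X$ and $\ats M = \at M$, so $\ats\Pp X = \at\P X$ and $\Pp\ats M = \P\at M$.

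For the first isomorphism, recall that $\varepsilon_X : X \to \at\P X$ is a homeomorphism for every space $X$. I would take $\Theta_X := \lambda_{\at\P X}\circ\varepsilon_X : X \leadsto \at\P X = \ats\Pp X$ (that is, $\Theta_X = \Lambda\varepsilon_X$). Since $\varepsilon_X$ is a homeomorphism so is $\s\varepsilon_X$, and since $\s\lambda$ is the identity under the identification $\s = \s\s$, we get $\s\Theta_X = \s\varepsilon_X$, a homeomorphism; hence $\Theta_X$ is a $\TopS$-isomorphism by \cref{isos in TopS}. Naturality of $\Theta$ in a sober map $f : X \leadsto Y$ amounts, after unwinding $\bullet$ as $\s({-})\circ({-})$, to the identity $\s(\ats\Pp f)\circ\Theta_X = \s\Theta_Y\circ f$, which follows by combining \cref{lemma: atP and atsPp}(2) (so that $\s\ats\Pp f = \s\at\P f$) with the outer rectangle of \cref{lem: con diagram}. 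Thus $\Theta : 1_{\TopS}\Rightarrow\ats\Pp$ is a natural isomorphism.

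For the second isomorphism, note that for a spatial MT-algebra $M$ the counit $\eta_M : \P\at M \to M = \Pp\ats M$ is an MT-isomorphism, so $\Gamma\eta_M$ is a proximity morphism whose restriction to the opens is the frame isomorphism $\O\eta_M$; by \cref{l:isochar} it is therefore a $\MTP$-isomorphism, and I would take it as the component $\Xi_M$ of a transformation $\Pp\ats\Rightarrow 1_{\SMTP^{\mathrm{op}}}$ (equivalently $1_{\SMTP}\Rightarrow\Pp\ats$ in $\SMTP$). The naturality of $\Xi$ along a proximity morphism $g : M\to N$ between spatial MT-algebras is the delicate point: by \cref{proximity on locally closed elements} it suffices to compare the relevant proximity morphisms on open elements, where $\star$ is ordinary composition and the formulas \eqref{eq:h} and \eqref{eq:Pp} for $\Pp$ together with the description of $\ats g$ in \cref{proximity to quasi} and the identity \eqref{eq:s theta psi} reduce the claim to the already-known naturality of $\eta$. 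Finally, one verifies the two triangle identities for $(\ats,\Pp,\Theta,\Xi)$ — again reducing to open elements — to conclude that $\ats \dashv \Pp$ is an adjoint equivalence, hence $\TopS$ is equivalent to $\SMTP^{\mathrm{op}}$.

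The main obstacle will be the naturality of the counit $\Xi$: unlike the unit, where \cref{lem: con diagram} and \cref{lemma: atP and atsPp} do essentially all the work, here one must track the soberification identifications and the non-functional composition $\star$ in $\MTP$ while pushing a proximity morphism through both functors. As a sanity check there is also a more conceptual route: $\TopS(X,Y) = \Top(X,\s Y)\cong\Sob(\s X,\s Y)$ gives $\TopS\simeq\Sob$; since $\P X$ is always spatial with $\O\P X = \Omega X$, the functor $\O$ restricts to an equivalence $\SMTP\simeq\SFrm$; and $\Sob\simeq\SFrm^{\mathrm{op}}$ — composing these yields $\TopS\simeq\SMTP^{\mathrm{op}}$. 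However, the point of the theorem is to identify the explicit functors $\ats$ and $\Pp$ as this equivalence, so the unit/counit argument above is the one to carry out.
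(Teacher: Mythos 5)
Your proposal is correct and follows essentially the same route as the paper: the paper also takes $\widehat\varepsilon_X=\Lambda\varepsilon_X=\lambda_{\at\P X}\circ\varepsilon_X$ and $\widehat\eta_M=\Gamma\eta_M$ as the components, proves they are isomorphisms via \cref{isos in TopS} and \cref{l:isochar}, and establishes naturality using exactly \cref{lem: con diagram}, \cref{lemma: atP and atsPp}, \cref{proximity on locally closed elements}, and the formulas \eqref{eq:h} and \eqref{eq:Pp}. The only inessential difference is your final appeal to the triangle identities, which is not needed: two natural isomorphisms $1_{\TopS}\cong\ats\Pp$ and $1_{\SMTP^{\mathrm{op}}}\cong\Pp\ats$ already constitute an equivalence, and the paper stops there.
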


\begin{proof}
     We first define $\widehat \varepsilon : 1_{\TopS} \to \ats \Pp$ by setting $\widehat{\varepsilon}_X =  \lambda_{\at\P X} \circ \varepsilon_X$ for each $X \in \TopS$. By \cref{functor from top to tops}, $\widehat{\varepsilon}_X = \Lambda\varepsilon_X : X \leadsto \at\P X$ is a $\TopS$-isomorphism since $\varepsilon_X$ is a homeomorphism (see \cref{isos in TopS}). We show that $\widehat \varepsilon$ is a natural transformation by showing that the following diagram on the left commutes in $\TopS$. Using the identification $\s = \s\s$, this is equivalent to showing that the diagram on the right commutes in $\Top$:
\[
\begin{tikzcd}
    X \ar[r, "\widehat \varepsilon_X", rightsquigarrow] \ar[d, "f"', rightsquigarrow] & \at \P X \ar[d, "{\ats}\Pp f", rightsquigarrow] \\
    Y \ar[r, "\widehat \varepsilon_Y"', rightsquigarrow] & \at\P Y
\end{tikzcd}
\qquad
\begin{tikzcd}
    X \ar[d, "f"'] \ar[r, "\varepsilon_X"] & \at\P X \ar[r, "\lambda_{\at\!\P\! X}"] & \s\at\P X \ar[d, "
    \s\!\ats \!\Pp\! f"]\\
    \s Y \ar[r, "\varepsilon_{\s\!Y}"'] & \at\P \s Y \ar[r, "\lambda_{\at\!\P\! \s \!Y}"'] & \s\at\P \s Y
\end{tikzcd}
\]
By \cref{lem: con diagram}, it suffices to show that $\s\at\P f = \s\ats\Pp f$, which is given by \cref{lemma: atP and atsPp}.

We next define $\widehat \eta : 1_{{\SMTP}^{\mathrm{op}}} \to \Pp \ats$ by setting $\widehat \eta_M = \eta_M \circ 1_M$ for each $M \in \SMTP$. By \cref{functor from mt to mtp}, $\widehat \eta_M = \Gamma \eta_M : M \to \P\at M$ is a proximity isomorphism since $\eta_M$ is an isomorphism of MT-algebras 
(because $M$ is spatial; see \cref{MT duality}). We show that $\widehat \eta$ is a natural transformation by showing that the following diagram commutes in $\SMTP$:
\[
\begin{tikzcd}
    M \ar[r, "\widehat \eta_M", rightarrow] \ar[d, "g"', rightarrow] & \P\at M \ar[d, "{\Pp}\!\ats\! g", rightarrow] \\
    N \ar[r, "\widehat \eta_N"', rightarrow] & \P\at N
\end{tikzcd}
\]
By \cref{proximity on locally closed elements}, it is enough to show that the diagram commutes on open elements. 
Let $u \in \sfO M$. By \eqref{eq:Pp},
\begin{align*}
\Pp\ats g \star \widehat{\eta}_M(u)
 &= \Pp\ats g \circ \widehat{\eta}_M(u)\\
&=\Pp\ats g(\eta_M(u))\\
&= \{y \in \at N \mid \eta_M(u)\in \ats g(y)\}.
\end{align*}
Moreover, 
\begin{align*}
\eta_N \star g(u)
&=\eta_N \circ g(u) = \{y \in \at N \mid y \leq g(u)\}.
\end{align*}
Thus, it is enough to recall from \cref{proximity to quasi} that $\ats g(y) = \{\eta_M(u) \mid u \in \O M, \, y \leq g(u)\}$.
Hence, $\TopS$ is equivalent to $\SMTP^{\mathrm{op}}$. 
\end{proof}

Let $\TDTopS$ be the full subcategory of $\TopS$, and let $\STDMTP$ be the full subcategory of $\SMTP$ consisting of $T_D$-algebras. We have:

\begin{corollary}\label{TDTopS}
    $\TDTopS$ is equivalent to $\STDMTP^{\mathrm{op}}$.
\end{corollary}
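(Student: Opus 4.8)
The plan is to derive this simply by restricting the equivalence of \cref{SMTp equivaent to Tops} to full subcategories. Recall that there we exhibited functors $\ats:\SMTP^{\mathrm{op}}\to\TopS$ and $\Pp:\TopS\to\SMTP^{\mathrm{op}}$ together with natural isomorphisms $\widehat\varepsilon:1_{\TopS}\to\ats\Pp$ and $\widehat\eta:1_{\SMTP^{\mathrm{op}}}\to\Pp\ats$. Since $\TDTopS$ and $\STDMTP$ are \emph{full} subcategories of $\TopS$ and $\SMTP$, respectively, it suffices to check that $\Pp$ carries objects of $\TDTopS$ to objects of $\STDMTP^{\mathrm{op}}$ and that $\ats$ carries objects of $\STDMTP^{\mathrm{op}}$ to objects of $\TDTopS$; the components of $\widehat\varepsilon$ and $\widehat\eta$ at these objects are then automatically morphisms (indeed isomorphisms) of the respective subcategories, by fullness, so the restricted functors form an equivalence.

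For the first closure statement, let $X$ be a $T_D$-space. The MT-algebra $\P X$ is spatial, as $\eta_{\P X}:\P X\to\P\at\P X$ is injective (a subset is determined by the singletons it contains). Moreover, the atoms of $\P X$ are the singletons $\{x\}$, and $\{x\}\in\LC\P X$ exactly when $x$ is a locally closed point of $X$; hence $\LC\P X$ join-generates $\P X$ iff every point of $X$ is locally closed, i.e.\ iff $X$ is $T_D$. Thus $\Pp X=\P X\in\STDMTP$. For the second, let $M$ be a spatial $T_D$-algebra. Then $\eta_M:M\to\P\at M$ is an isomorphism of MT-algebras, so $\P\at M$ is again a $T_D$-algebra, and by the computation just made (or by \cref{thm: MT duality for T0 and TD 2}) this forces $\at M$ to be a $T_D$-space; hence $\ats M=\at M\in\TDTopS$.

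Combining these two observations with \cref{SMTp equivaent to Tops} yields that $\ats$ and $\Pp$ restrict to mutually quasi-inverse functors between $\STDMTP^{\mathrm{op}}$ and $\TDTopS$, establishing the claimed equivalence. There is essentially no obstacle here: the only point requiring care is that $T_D$-separation is transported faithfully along $\P$ and $\at$, which is precisely the already-established correspondence between $T_D$-spaces and spatial $T_D$-algebras; everything else is the formal fact that an equivalence of categories restricts to any pair of full subcategories closed under the two functors.
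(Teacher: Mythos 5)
Your proposal is correct and follows essentially the same route as the paper: both arguments restrict the equivalence of \cref{SMTp equivaent to Tops} to the two full subcategories after checking that $\Pp$ and $\ats$ preserve the $T_D$ property on objects, which the paper obtains directly from \cref{thm: MT duality for T0 and TD 2} and you additionally verify by the explicit singleton/locally-closed-point computation. The extra detail is harmless but not needed given the cited theorem.
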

\begin{proof}
    By \cref{thm: MT duality for T0 and TD 2}, $X \in \TDTopS$ implies $\Pp X=\P X \in \STDMTP^{\mathrm{op}}$, and $M \in \STDMTP^{\mathrm{op}}$ implies $\ats M=\at M \in \TDTopS$. Thus, the equivalence of \cref{SMTp equivaent to Tops} restricts to an equivalence between $\TDTopS$ and $\STDMTP^{\mathrm{op}}$. 
\end{proof}

Let $\TDSFrm$ be the full subcategory of $\Frm$ consisting of $T_D$-spatial frames. The equivalence of \cref{TdMTp equivalent to Frm} restricts to yield:

\begin{proposition}\label{TDFrm}
  $\TDSFrm$ is equivalent to {\em $\STDMTP$}. 
\end{proposition}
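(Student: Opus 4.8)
The plan is to restrict the equivalence between $\Frm$ and $\TDMTP$ furnished by \cref{TdMTp equivalent to Frm} (given by $\O$ and a quasi-inverse $\mathcal{F}$) to the full subcategories $\TDSFrm\subseteq\Frm$ and $\STDMTP\subseteq\TDMTP$. Since both are \emph{full} subcategories, it suffices to check that the equivalence functors carry the objects of one into the objects of the other; concretely, that $\O$ sends every spatial $T_D$-algebra to a $T_D$-spatial frame and that $\mathcal{F}$ sends every $T_D$-spatial frame to a spatial $T_D$-algebra. Once this is established, the unit and counit of the equivalence restrict, and we are done.

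For the first direction, I would take $M\in\STDMTP$. By the $T_D$-part of MT-duality (\cref{thm: MT duality for T0 and TD 2}), $\eta_M:M\to\P\at M$ is an isomorphism of MT-algebras and $\at M$ is a $T_D$-space. Hence $\O M\cong\O\P\at M=\Omega\at M$, so $\O M$ is $T_D$-spatial, that is, $\O M\in\TDSFrm$.

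For the converse direction, I would start from $L\in\TDSFrm$, say $L\cong\Omega X$ with $X$ a $T_D$-space. The key point is that $\P X$ is itself a $T_D$-algebra, since its atoms are the singletons of the (locally closed) points of $X$; hence \cref{lem: fom is td} — or, equivalently, the fact from \cref{adjoint equivalence} that $\zeta_{\P X}$ is an isomorphism — gives $\P X\cong\mathcal{F}\O\P X=\mathcal{F}\Omega X$. Applying the functor $\mathcal{F}$ to the frame isomorphism $\Omega X\cong L$ then yields $\mathcal{F}\Omega X\cong\mathcal{F}L$ in $\MTP$, so altogether $\mathcal{F}L$ is $\MTP$-isomorphic to the spatial MT-algebra $\P X$.

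The one delicate point — the main obstacle, though a mild one — is that $\MTP$-isomorphisms need not be bijections, so spatiality does not obviously transfer along them. To handle this I would invoke \cref{prop: proximity isos}: both $\mathcal{F}L$ and $\P X$ are $T_D$-algebras, so the $\MTP$-isomorphism between them is in fact an order-isomorphism, and spatiality — being the purely order-theoretic condition that every element is the join of the atoms below it — is preserved by order-isomorphisms. Therefore $\mathcal{F}L$ is a spatial $T_D$-algebra, which completes the check and shows that the equivalence of \cref{TdMTp equivalent to Frm} restricts to an equivalence between $\TDSFrm$ and $\STDMTP$.
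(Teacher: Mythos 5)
Your proposal is correct and follows essentially the same route as the paper: both directions restrict the equivalence of \cref{TdMTp equivalent to Frm} by checking that $\O$ sends spatial $T_D$-algebras to $T_D$-spatial frames (via \cref{thm: MT duality for T0 and TD 2}) and that $\mathcal{F}$ sends $T_D$-spatial frames to spatial $T_D$-algebras (via $\P X \cong \mathcal{F}\Omega X$ from \cref{lem: fom is td}). Your additional remark that spatiality transfers along the $\MTP$-isomorphism because, by \cref{prop: proximity isos}, isomorphisms between $T_D$-algebras are order-isomorphisms is a legitimate point of care that the paper leaves implicit, but it does not change the argument.
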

\begin{proof}
    It suffices to show that $L \in \TDSFrm$ implies $\mathcal{F} L \in \bf \STDMTP$, and  $M \in \STDMTP$ implies $ \O M \in \TDSFrm$. 
    If $L \in \TDSFrm$ then there exists a $T_D$-space $X$ such that $L \cong \Omega X$. 
    Since $X$ is a $T_D$-space, $\P X$ is a $T_D$-algebra by \cref{thm: MT duality for T0 and TD 2}, and hence $\mathcal{F} \Omega X \cong \P X$ by \cref{lem: fom is td}. 
    Thus, $\mathcal{F} L \cong \mathcal{F} \Omega X \cong \P X$ is a spatial $T_D$-algebra, and hence $\mathcal{F} L \in \STDMTP$. If $M \in \STDMTP$ then $M \cong \P X$ for some $T_D$-space $X$, and hence $\O M = \Omega X \in \TDSFrm$. 
    \end{proof}

Putting together \cref{SMTp equivaent to Tops,TDTopS,TDFrm}, we arrive at the following commutative diagram: 
\[
\begin{tikzcd}[sep=4em]
    \TopS & \SMTP^{\mathrm{op}} \\
    \TDTopS & \STDMTP^{\mathrm{op}} & \TDSFrm^{\mathrm{op}} & \\
    \arrow[color={red},"\cong", <->, from=1-1, to=1-2]
    \arrow[color={red},"\cong", <->, from=2-1, to=2-2]
    \arrow[color={red},"\cong", <->, from=2-2, to=2-3]
    \arrow["\mathrm{full}", hook, from=2-1, to=1-1]
    \arrow["\mathrm{full}", hook, from=2-2, to=1-2]
\end{tikzcd}
\]
\section*{Tables of relevant categories}
For the reader's convenience, we conclude by listing the categories considered in this article, indicating the objects, morphisms, and where the categories appear in the body of the text. 

\begin{table}[H]
    \centering
    \begin{tabular}{|l|l|l|l|}
        \hline
       {\bf Category} & {\bf Objects} & {\bf Morphisms} & \\
       \hline\hline
     {\Top} & topological spaces &  continuous maps & \cref{frames and coframes}\\\hline
      {\Sob} & sober spaces &  continuous maps & \cref{frames and coframes}\\\hline
            {$\ToTop$} & $T_0$-spaces &  continuous maps & \cref{Interior algebras and MT-algebras}\\\hline
            {$\TDTop$} & $T_D$-spaces &  continuous maps & \cref{Interior algebras and MT-algebras}\\\hline
            {$\TopLC$} & topological spaces &  locally closed maps & \cref{defn: TopLC}\\\hline
            {$\ToTopLC$} & topological spaces &  locally closed maps &\cref{sec: TD coreflection}\\ \hline
            {$\TopS$} & topological spaces &  sober maps &\cref{defn: TopS}\\\hline
            {$\TDTopS$} & TD-spaces &  sober maps &\cref{TDTopS}\\\hline
\end{tabular}
    \caption{Categories of topological spaces}
\end{table}

\begin{table}[H]
    \centering
    \begin{tabular}{|l|l|l|l|}
        \hline
       {\bf Category} & {\bf Objects} & {\bf Morphisms} & \\ 
       \hline\hline
        {\Frm} & frames & frame morphisms &\cref{frames and coframes}\\ \hline
        {\FrmD} & frames & D-morphisms & \cref{frames and coframes}\\ \hline
        {\SFrm} & spatial frames & frame morphisms & \cref{frames and coframes}\\ \hline
        {\TDSFrm} & $T_D$-spatial frames & frame morphisms & \cref{frames and coframes}\\ \hline
        {\TDSFrmD} & $T_D$-spatial frames & D-morphisms & \cref{frames and coframes}\\ \hline
\end{tabular}
    \caption{Categories of frames}
\end{table}

\begin{table}[H]
    \centering
    \begin{tabular}{|l|l|l|l|}
        \hline
        {\bf Category} & {\bf Objects} & {\bf Morphisms} & \\ 
        \hline\hline
        {\MT} & MT-algebras & MT-morphisms & \cref{defn: MT algebra}\\ \hline
         {\MTP} & MT-algebras &  proximity morphisms & \cref{composition is proximity mt-morphism}\\\hline
         {\MTD} &  MT-algebras &  D-morphisms & \cref{defn: MTD}\\\hline
         {\TDMT} & $T_D$-algebras &  MT-morphisms & \cref{def: T0 and TD algebras}\\\hline
          {\TDMTP} & $T_D$-algebras &  proximity morphisms & \cref{TD-algebras}\\\hline
           {\ToMT} & $T_0$-algebras &  MT-morphisms & \cref{def: T0 and TD algebras}\\\hline
           \hline
           {\SMT} & spatial MT-algebras & MT-morphisms & \cref{Interior algebras and MT-algebras}\\ \hline
         {\SMTD} & spatial  MT-algebras &  D-morphisms & \cref{defn: MTD}\\\hline
       {\SMTP} &  spatial MT-algebras &  proximity morphisms & \cref{sec: duality for sober maps}\\\hline
         {\STDMT} & spatial $T_D$-algebras &  MT-morphisms & \cref{Interior algebras and MT-algebras}\\\hline
          {\STDMTP} & spatial $T_D$-algebras &  proximity morphisms & 
          \cref{TDTopS}\\\hline
           {\STOMT} & spatial $T_0$-algebras &  MT-morphisms & \cref{Interior algebras and MT-algebras}\\\hline
{\STOMTD} & spatial $T_0$-algebras &  D-morphisms & \cref{defn: MTD}\\\hline
    \end{tabular}
    \caption{Categories of MT-algebras}
\end{table}

\begin{table}[H]
    \centering
    \begin{tabular}{|l|l|l|l|}
        \hline
       {\bf Category} & {\bf Objects} & {\bf Morphisms} & \\ \hline\hline
        {\BA} & boolean algebras & boolean homomorphisms &\cref{defn: Heyting albegra}\\ \hline
       {\DLat} & bdd distr lattices & bdd lattice homomorphisms & \cref{defn: Heyting albegra}\\ \hline
       {\HLat} & Heyting algebras & bdd lattice homomorphisms &\cref{defn: Heyting albegra}\\ \hline
       {\HA} & Heyting algebras & Heyting homomorphisms & \cref{defn: Heyting albegra}\\ \hline
      {\IA} & interior algebras & int alg morphisms &\cref{def: IA and Int}\\ \hline
       {\SA} & essential algebras & int alg morphisms &\cref{defn: essential algebras}\\ \hline
       {\IAC} & interior algebras & continuous morphisms &\cref{def: IA and Int}\\ \hline
        {\SAC} & essential algebras & continuous morphisms& \cref{defn: essential algebras}\\ \hline
         {\Cons} & constructible algebras & constructible morphisms& \cref{constructible algebras}\\ \hline
       \end{tabular}
    \caption{Other categories}
\end{table}

\bibliographystyle{alpha}

\makeatletter
\begin{multicols}{2}
\enddoc@text
\end{multicols}
\let\enddoc@text\relax
\makeatother
\end{document}